\documentclass[a4paper,11pt]{article}

\usepackage{indentfirst}
\usepackage{amsmath}
\usepackage{amsthm}
\usepackage{amsfonts}
\usepackage{bbm}
\usepackage{accents}
\usepackage{xcolor}

\newtheorem{thm}{Theorem}
\newtheorem{remark}{Remark}
\newtheorem{lemma}{Lemma}
\newtheorem{cor}{Corollary}
\newtheorem{proposition}{Proposition}

\theoremstyle{definition}

\newcommand{\eee}{{\rm e}}

\newcommand{\me}{\mathbb{E}}
\newcommand{\mn}{\mathbb{N}}
\newcommand{\mr}{\mathbb{R}}
\newcommand{\mmp}{\mathbb{P}}
\def\1{\mathbbm{1}}
\newcommand{\ms}{\mathfrak{s}}

\begin{document}

\title{Laws of the iterated logarithm for random Dirichlet series with general weights}\date{}
\author{Alexander Iksanov\footnote{Faculty of Computer Science and Cybernetics, Taras Shevchenko National University of Kyiv, Ukraine; e-mail address:
iksan@univ.kiev.ua} \ \ and \ \ Ruslan Kostohryz\footnote{Faculty of Computer Science and Cybernetics, Taras Shevchenko National University of Kyiv, Ukraine; e-mail address:
kostogriz2909@gmail.com}}
\maketitle
\begin{abstract}
\noindent For each $s>0$, we consider a random Dirichlet series $X(s)=\sum_{k\geq 1}k^{-1/2-s}a_k\eta_k$, where $\eta_1$, $\eta_2,\ldots$ are independent and identically distributed random variables with mean zero and finite positive variance, and $(a_k)_{k\geq 1}$ is a deterministic sequence of real numbers satisfying $\sum_{k\geq 1}k^{-1-2s}a_k^2<\infty$ for each $s>0$ and $\sum_{k\geq 1}k^{-1}a_k^2=\infty$. We investigate the almost-sure fluctuations of $X(s)$ as $s\to0+$. Under these minimal assumptions, we construct examples exhibiting several non-standard forms of the law of the iterated logarithm (LIL) along
suitable sequences: the normalization and the upper and lower limit constants may differ from their classical counterparts. We also show that a regular growth condition of the form $\sum_{k\leq n}k^{-1}a_k^2\sim c(\log n)^\beta$, where $c,\beta>0$, is not by itself sufficient to ensure a standard LIL. Finally, under an additional counting condition controlling the frequency of indices at which the weights $a_k$ are comparatively large, we prove that $(2{\rm Var}\,[X(s)]\log\log({\rm Var}\,[X(s)]))^{-1/2}X(s)$ has the almost-sure cluster set $[-1,1]$ as $s\to 0+$. The latter result is applied to several coefficient sequences of number-theoretic origin.
\end{abstract}

\noindent Key words: almost-sure cluster set; arithmetic functions; law of the iterated logarithm; random Dirichlet series; weighted sums of independent random variables

\noindent 2020 Mathematics Subject Classification: Primary: 60F15 \\
\hphantom{2020 Mathematics Subject Classification: } Secondary: 11A25; 11M41; 60G50

\section{Introduction and main results}

Let $\eta_1,\eta_2,\ldots$ be independent copies of a random variable $\eta$ satisfying $\me[\eta]=0$ and $\sigma^2:=\me [\eta^2] \in(0,\infty)$. Let $(a_k)_{k\ge1}$ be a sequence of {\it real} numbers such that

\begin{equation}\label{eq:standing}
\sum_{k\geq 1}\frac{a_k^2}{k}=\infty\quad\text{and}\quad \sum_{k\geq 1}\frac{a_k^2}{k^{1+2s}}<\infty\quad\text{for}~s>0.
\end{equation}

For $s>0$, define the random Dirichlet series $$X(s):=\sum_{k\geq 1} \frac{a_k\eta_k}{k^{1/2+s}}.$$ The second condition in \eqref{eq:standing} guarantees  that the series converges almost surely for each $s>0$ and that $v(s):={\rm Var}[X(s)]<\infty$. 

Random Dirichlet series corresponding to specific choices of $(a_k)$ have been investigated extensively in recent years. A brief overview of the available limit theorems is given in Table~\ref{table:surv}. Throughout the paper, LIL stands for the law of the iterated logarithm.

\begin{table}[ht]
\centering

\begin{tabular}{|p{0.14\textwidth}|p{0.26\textwidth}|p{0.41\textwidth}|c|}
\hline
\textbf{$a_k$}
&
\textbf{Distribution of $\eta$}
&
\textbf{Results}
&
\textbf{Source}
\\
\hline

$a_k=1$
&
$\mmp\{\eta=\pm 1\}=1/2$
&
LIL
&
\cite{Aymone+Frometa+Misturini:2020}
\\\hline 

$a_k=(\log k)^\alpha$ for $\alpha>-1/2$
&
$\me [\eta]=0,~{\rm Var}[\eta]\in (0,\infty)$
&
Functional central limit theorem and LIL
&
\cite{Buraczewski etal:2023}
\\
\hline

$a_k=(\log k)^\alpha$ for $\alpha>-1/2$
&
$\me [\eta]=0,~{\rm Var}[\eta]\in (0,\infty)$
&
Functional large deviation principle (under $\me [\eee^{\lambda |\eta|}]<\infty$ for all $\lambda>0$); functional moderate deviation principle (under $\me [\eee^{\lambda |\eta|}]<\infty$ for some $\lambda>0$) and functional LIL
&
\cite{Gao+Xia:2026}
\\
\hline

$a_k=(\log k)^\alpha$ for $\alpha\geq -1/2$
&
$\me [\eta]=0,~{\rm Var}[\eta]\in (0,\infty)$ and $\me [\eee^{\lambda \eta}]<\infty$ for $|\lambda|<\lambda_0$
&
Precise large deviations
&
\cite{Xia:2026}
\\
\hline

$a_k=(\log k)^{-1/2}$
&
$\me [\eta]=0,~{\rm Var}[\eta]\in (0,\infty)$
&
Functional central limit theorem and LIL
&
\cite{Iksanov+Kostohryz:2025}
\\
\hline
$a_k=1$\newline for prime $k$ and $=0$ otherwise
&
$\me [\eta]=0,~{\rm Var}[\eta]\in (0,\infty)$
&
Functional central limit theorem and LIL
&

\cite{Dong+Iksanov:2026+}
\\
\hline

$a_k=1$
&
$\eta$ has symmetric $\gamma$-stable distribution for $\gamma\in (0,2)$
&
LIL-like result for $\sum_{k\geq 1}k^{-1/\gamma-s} a_k\eta_k$
&
\cite{Zhao+Huang:2024}
\\
\hline

$a_k=1$
&
$\me [\eta]=0,~{\rm Var}[\eta]\in (0,\infty)$ and $\eta$ is essentially upper bounded by a positive deterministic constant
&
Precise asymptotic behavior of $\mmp\{X(s)>t\}$ as $t\to\infty$ for fixed $s\in (0,1/2]$
&
\cite{Iksanov+Wachtel:2024}
\\
\hline

$a_k=1$
&
$\me [\eta]=0,~\mmp\{-\eta>t\}\sim {\rm const}\,t^{-\beta}$ as $t\to\infty$ for $\beta\in (1,2)$, and $\eta$ is essentially upper bounded by a positive deterministic constant
&
Precise asymptotic behavior of $\mmp\{X(s)>t\}$ as $t\to\infty$ for fixed $s\in (1/\beta-1/2,1/2]$
&
\cite{Kostohryz:2026+}
\\
\hline

\end{tabular}
\caption{Survey of some limit theorems for random Dirichlet series.}
\label{table:surv}
\end{table}

A result that serves as the starting point of the present paper is Theorem 3.1 in \cite{Buraczewski etal:2023}, reproduced below. For a family $(x_s)$ of real numbers denote by $C((x_s))$ the set of its limit points as $s\to 0+$.
\begin{proposition}\label{prop:bura}
Let $\alpha>-1/2$. Assume that $\me[\eta]=0$ and $\sigma^2=\me[\eta^2]\in(0,\infty)$. Then
\begin{equation}\label{eq:limitpoints}
C\Big(\Big(\Big(\frac{2^{2\alpha}}{\sigma^2 \Gamma(1+2\alpha)}\frac{s^{1+2\alpha}}{\log\log1/s}\Big)^{1/2}\sum_{k\geq 2} \frac{(\log k)^\alpha \eta_k}{k^{1/2+s}}: s\in(0, \eee^{-1})\Big)\Big)=[-1,1] \quad\text{\rm a.s.},
\end{equation}
where $\Gamma$ is the Euler gamma function.
In particular,
\begin{equation}\label{eq:limsup}
\limsup_{s\to 0+}\Big(\frac{s^{1+2\alpha}}{\log\log 1/s}\Big)^{1/2}\sum_{k\geq 2} \frac{(\log k)^\alpha \eta_k}{k^{1/2+s}}=\frac{(\Gamma(1+2\alpha))^{1/2}}{2^\alpha}\sigma
\quad\text{\rm a.s.}
\end{equation}
\end{proposition}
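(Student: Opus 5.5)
The plan is to reduce the statement to a LIL for the Gaussian-like process along a geometric sequence $s_n\downarrow 0$, and then control fluctuations between consecutive points. First compute the variance: $v(s)=\sigma^2\sum_{k\ge2}(\log k)^{2\alpha}k^{-1-2s}\sim\sigma^2\int_1^\infty (\log x)^{2\alpha}x^{-1-2s}\,dx=\sigma^2\Gamma(1+2\alpha)(2s)^{-1-2\alpha}$ as $s\to0+$. Hence $\log\log v(s)\sim\log\log(1/s)$, and the normalization in \eqref{eq:limitpoints} is exactly $(2v(s)\log\log v(s))^{-1/2}$ up to a factor tending to $1$. Note also that \eqref{eq:limsup} follows from \eqref{eq:limitpoints} by taking the supremum of the cluster set and simplifying the constant, so it suffices to prove \eqref{eq:limitpoints}.

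\textbf{Discretization.} Fix $q>1$ and set $s_n=q^{-n}$ (for the lower-bound/clustering part) or a slower sequence such as $s_n=\exp(-n^{\gamma})$ with $\gamma<1$ close to $1$ (for oscillation control). The key structural fact is that $X(s)$, viewed as a function of $s$, is an integral transform of the random walk-like partial sums $S(t)=\sum_{2\le k\le \eee^t}(\log k)^\alpha\eta_k$: by summation by parts, $X(s)=\int_0^\infty \eee^{-(1/2+s)t}\,dS(t)$ after the substitution $k=\eee^t$, more precisely $X(s)=\int (1/2+s)\eee^{-(1/2+s)t}(\ldots)\,dt$ with weights $k^{-1/2}$ absorbed. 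I would instead work directly with $X(s)$ as an infinite weighted sum of independent variables, and use a strong approximation (Koml\'os--Major--Tusn\'ady or Sakhanenko type, valid under just finite variance in the weaker form $o((n\log\log n)^{1/2})$ via Strassen) to replace $\eta_k$ by standard normals $\sigma Z_k$ with an error that, after the Abel transform, is $o((v(s)\log\log v(s))^{1/2})$ a.s. This reduces the problem to a centered Gaussian process $G(s)$ with covariance $\sigma^2\sum_k(\log k)^{2\alpha}k^{-1-s-u}\approx\sigma^2\Gamma(1+2\alpha)(s+u)^{-1-2\alpha}$.

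\textbf{Gaussian LIL.} For the Gaussian process, the normalized correlation is $\rho(s,u)=\bigl(2\sqrt{su}/(s+u)\bigr)^{1+2\alpha}$, which depends only on $\log(s/u)$; after the time change $s=\eee^{-t}$, $G(\eee^{-t})/\sqrt{v(\eee^{-t})}$ is a stationary Gaussian process with correlation $(\cosh(\tau/2))^{-1-2\alpha}$, decaying exponentially. Upper bound: along $t_n=n\delta$ use Gaussian tails plus Borel--Cantelli to get $\limsup\le 1+\varepsilon$, and bound the oscillation over $[t_n,t_{n+1}]$ by Borel--TIS/Dudley (the process is smooth in $t$). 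Lower bound and full cluster set $[-1,1]$: along $t_n=n\Delta$ with $\Delta$ large the correlations are $\le \eee^{-c\Delta}$, so a standard argument for weakly dependent Gaussian sequences (e.g. comparison via Slepian/normal comparison lemma with the independent case, or extracting the independent component) gives $\limsup\ge 1-\varepsilon$; by symmetry the liminf is $-1$, and continuity in $s$ of the path then gives every point of $[-1,1]$ as a limit point.

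The main obstacle is the strong approximation step: only a second moment is assumed, so the coupling error is merely $o(\sqrt{n\log\log n})$ for partial sums, and one must check that after the Abel summation with weights $(\log k)^\alpha k^{-1/2-s}$ this error stays negligible uniformly in $s$ relative to $(s^{-1-2\alpha}\log\log(1/s))^{1/2}$. I would handle this by writing $X(s)=\int_1^\infty S^{*}(x)\,d(-x^{-1/2-s})$-type representations with $S^*(x)=\sum_{k\le x}k^{-1/2}(\log k)^\alpha\eta_k$ and applying a Strassen-type invariance principle to $S^*$ (a weighted sum with regularly varying variance $\sim(\log x)^{1+2\alpha}/(1+2\alpha)$, where the case $\alpha$ near $-1/2$ needs care with truncation of $\eta_k$ at level $\sqrt{k}$-ish scales).
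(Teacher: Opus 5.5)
\textbf{The gap is in the strong-approximation step.} Everything else in your plan rests on it, and the concrete version you propose does not work. Suppose you couple the unweighted partial sums, $\sum_{k\le n}\eta_k=\sigma W(n)+R_n$ with $R_n=o((n\log\log n)^{1/2})$, put $Z_k=W(k)-W(k-1)$, and Abel-transform with $c_k(s)=(\log k)^\alpha k^{-1/2-s}$. The only bound this gives is $|X(s)-\sigma\sum_k c_k(s)Z_k|\le\sum_k|c_k(s)-c_{k+1}(s)|\,|R_k|$. Since $|c_k(s)-c_{k+1}(s)|\asymp k^{-3/2-s}(\log k)^\alpha$ for large $k$, this is $o\big(\sum_k k^{-1-s}(\log k)^\alpha(\log\log k)^{1/2}\big)=o\big(s^{-1-\alpha}(\log 1/s)^{1/2}\big)$. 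The normalization, however, is of order $s^{-1/2-\alpha}(\log\log 1/s)^{1/2}$, so you lose a factor of roughly $s^{-1/2}$. Even an $o(n^{1/2})$ coupling would not rescue this. The reason is that a coupling of the i.i.d.\ sums works on the scale $n^{1/2}$, while the fluctuations that matter live on the scale $B_n^{1/2}$, where $B_n=\sum_{k\le n}(\log k)^{2\alpha}/k\sim(\log n)^{\beta}/\beta$ and $\beta=1+2\alpha$. Your last paragraph names the right object: an approximation $S^*(n)=W(\sigma^2B_n)+o((B_n\log\log B_n)^{1/2})$ for $S^*(n)=\sum_{k\le n}b_k\eta_k$ with $b_k=k^{-1/2}(\log k)^\alpha$. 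But you only invoke an unspecified ``Strassen-type'' principle, and under a bare second moment this approximation is exactly what has to be proved.

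\textbf{How to supply it, and what else needs fixing.} By Skorokhod embedding, $S^*(n)=W(T_n)$ with $T_n=\sum_{k\le n}b_k^2\tau_k$, where the $\tau_k$ are i.i.d.\ with $\me[\tau_k]=\sigma^2$. The weighted strong law $T_n/B_n\to\sigma^2$ a.s.\ then follows from the Jamison--Orey--Pruitt theorem, because $B_n/b_n^2\sim n\log n/\beta$ gives $\#\{n: B_n/b_n^2\le x\}=O(x)$. The modulus of continuity of $W$ converts $T_n/B_n\to\sigma^2$ into the required error $o((B_n\log\log B_n)^{1/2})$. After that, the Abel step you flagged is routine. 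You have $X(s)=s\int_1^\infty S^*(x)x^{-s-1}\,{\rm d}x$; the kernel is $x^{-s}$, not $x^{-1/2-s}$, since $k^{-1/2}$ is already inside $S^*$. Then $s\int_1^\infty o((B(x)\log\log B(x))^{1/2})x^{-s-1}\,{\rm d}x=o(s^{-\beta/2}(\log\log 1/s)^{1/2})$ uniformly in $s$.

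The Gaussian step also has to be made exact. The sum $\sum_k(\log k)^{2\alpha}k^{-1-s-u}$ is only asymptotic to $\Gamma(\beta)(s+u)^{-\beta}$, so your process is not literally stationary in $t=\log 1/s$. Replace $B(x)$ by $(\log x)^\beta/\beta$; this is a bounded change and costs only $O((\log\log x)^{1/2})$. It gives $\int_0^\infty\eee^{-sy}\,{\rm d}W(\sigma^2y^\beta/\beta)$, whose covariance is exactly $\sigma^2\Gamma(\beta)(s+u)^{-\beta}$ and whose stationary correlation is $(\cosh(\tau/2))^{-\beta}$. The classical LIL for stationary Gaussian processes then applies.

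Repaired this way, your route is a genuine alternative to the paper's. The paper obtains the proposition as the case $a_k=(\log k)^\alpha$ of Theorem~\ref{thm:main4} (Condition~B with $\varphi\equiv1$). Its argument uses truncation, exponential-moment bounds and Borel--Cantelli along $\exp(-n^{1\mp\gamma})$, chaining between grid points, and independent blocks with a change of measure for the lower bound. It never passes through a Gaussian process.
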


Observe that
\begin{equation}\label{eq:basic2}
\sum_{k\leq n}\frac{(\log k)^{2\alpha}}{k}~\sim~\frac{1}{1+2\alpha}(\log n)^{1+2\alpha},\quad n\to\infty\quad\text{and}\quad \sum_{k\geq 2}\frac{(\log k)^{2\alpha}}{k^{1+2s}}~\sim~ \frac{\Gamma(1+2\alpha)}{(2s)^{1+2\alpha}},\quad s\to 0+.
\end{equation}
The second asymptotic relation shows that
$v(s)~\sim~\sigma^2 \Gamma(1+2\alpha) (2s)^{-(1+2\alpha)}$ as $s\to 0+$,
so that \eqref{eq:limsup} is precisely a standard LIL, with the normalization
$(2v(s)\log\log v(s))^{1/2}$ and the limit constant $1$. 
Motivated by \eqref{eq:limsup}, we replace the specific choice $a_k=(\log k)^\alpha$ by an arbitrary sequence $(a_k)$ satisfying \eqref{eq:standing}. Our objectives are twofold. First, we investigate the range of possible LILs that may arise under these minimal assumptions \eqref{eq:standing}. Second, we identify additional conditions on $(a_k)$ under which the standard LIL continues to hold.

The first condition in \eqref{eq:standing} is the weakest natural assumption under which a LIL can be expected. By itself, however, it imposes very little regularity on the sequence $(a_k)$. Our first results, Theorems \ref{thm:main1} and \ref{thm:main2}, demonstrate  that, under \eqref{eq:standing} alone, the asymptotic behavior of $X(s)$ may differ drastically from that predicted by the standard LIL, and a variety of non-standard LILs is possible. More precisely, we shall show that under \eqref{eq:standing} the following situations are possible:

\noindent 1) the normalization for $\limsup$ or $\liminf$ is as in \eqref{eq:limitpoints}, that is, $(2v(s)\log\log v(s))^{1/2}$, yet the limit constant is different (Theorem \ref{thm:main1}(a));

\noindent 2) the normalization for $\limsup$ or $\liminf$ is different from $(2v(s)\log\log v(s))^{1/2}$ (Theorem \ref{thm:main1}(b));

\noindent 3) the normalizations for $\limsup$ or $\liminf$ are different (Theorem \ref{thm:main1}(c));

\noindent 4) $X(s)$ does not satisfy a LIL at all, whereas $\log |X(s)|$ does (Theorem \ref{thm:main2}).

The standard form of LIL in \eqref{eq:limitpoints} is partially due to the fact that if $a_k=(\log k)^\alpha$ for $\alpha>-1/2$, the variance of each summand in $X(s)$ is negligibly small compared to the variance of $X(s)$, namely,
\begin{equation*}
\lim_{s\to 0+}\frac{\sup_k\frac{(\log k)^{2\alpha}}{k^{1+2s}}}{v(s)}=0.
\end{equation*}
In order to obtain the effects mentioned in 1)-4) we shall define a sequence $(a_k)$ satisfying \eqref{eq:standing} and such that the asymptotic behavior of $v$ is driven by a single term or a few terms.

\begin{thm}\label{thm:main1}
a) Assume that $\eta$ has a normal distribution with mean $0$ and variance $\sigma^2\in (0,\infty)$ and set $s_n=2^{-1}\eee^{-n^2}$ for $n\in\mn$. Then there exists a sequence $(a_k)$ satisfying \eqref{eq:standing} for which 
    \begin{equation}\label{main1.1sup}
        \limsup_{n\to\infty}\frac{X(s_n)}{(2v(s_n)\log\log v(s_n))^{1/2}}=\frac{1}{2^{1/2}}\quad\text{{\rm a.s.}}
    \end{equation}
    and
    \begin{equation}\label{main1.1inf}
        \liminf_{n\to\infty}\frac{X(s_n)}{(2v(s_n)\log\log v(s_n))^{1/2}}=-\frac{1}{2^{1/2}}\quad\text{{\rm a.s.}}
    \end{equation}
b) Assume that $\mmp\{\eta=\pm \sigma\}=1/2$. Then, with the same sequence $(a_k)$ and the same $s_n$ as in part (a), 
    \begin{equation}\label{main1.2sup}
        \limsup_{n\to\infty}\frac{X(s_n)}{(v(s_n))^{1/2}}=1\quad\text{{\rm a.s.}}
    \end{equation}
    and
    \begin{equation}\label{main1.2inf}
        \liminf_{n\to\infty}\frac{X(s_n)}{(v(s_n))^{1/2}}=-1\quad\text{{\rm a.s.}}
    \end{equation}
c) Assume that $\eta=\xi-\sigma$, where $\xi$ has an exponential distribution of mean $\sigma$. Then, with the same sequence $(a_k)$ and the same $s_n$ as in part (a), 
\begin{equation}\label{main1.3sup}
\limsup_{n\to\infty}\frac{X(s_n)}{(v(s_n))^{1/2}\log\log v(s_n)}=\frac{1}{2}\quad\text{{\rm a.s.}}
\end{equation}
and
\begin{equation}\label{main1.3inf}
\liminf_{n\to\infty}\frac{X(s_n)}{(v(s_n))^{1/2}}=-1\quad\text{{\rm a.s.}}
\end{equation}
\end{thm}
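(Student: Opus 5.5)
The plan is to take $(a_k)$ supported on a very sparse set of indices $k_1<k_2<\cdots$. The indices and weights will be tuned so that at $s=s_n$ the $n$-th term carries essentially all of the variance. Then $X(s_n)/(v(s_n))^{1/2}$ is, up to an a.s. negligible error, just $\eta_{k_n}/\sigma$, a sequence of i.i.d. variables. All three parts then reduce to classical Borel--Cantelli statements about i.i.d. sequences with the three given distributions.

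\textbf{Construction.} Put $k_n:=\lfloor \exp(\eee^{n^2})\rfloor$ and $a_{k_n}^2:=k_n\eee^{n^2}$, with $a_k=0$ for all other $k$. Since $2s_n=\eee^{-n^2}$, we have
\[
k_m^{-2s_n}\approx \exp(-\eee^{m^2-n^2}).
\]
This equals $\eee^{-1}$ for $m=n$, is close to $1$ for $m<n$, and is doubly exponentially small for $m>n$. Condition \eqref{eq:standing} holds because:
\begin{itemize}
\item $\sum_k a_k^2/k=\sum_n \eee^{n^2}=\infty$;
\item for fixed $s>0$, $\sum_n \eee^{n^2}\exp(-2s\,\eee^{n^2})<\infty$.
\end{itemize}

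\textbf{Decomposition at $s=s_n$.} Write $X(s_n)=T_n+R_n$ with $T_n:=a_{k_n}k_n^{-1/2-s_n}\eta_{k_n}$, so that ${\rm Var}[T_n]\sim\sigma^2\eee^{n^2-1}$. For the remainder,
\[
{\rm Var}[R_n]\le \sigma^2\Big(\sum_{m<n}\eee^{m^2}+\sum_{m>n}\eee^{m^2}\exp(-\eee^{m^2-n^2})\Big)=O(\eee^{(n-1)^2})+o(1).
\]
Consequently ${\rm Var}[R_n]/{\rm Var}[T_n]=O(\eee^{-2n})$. This gives two facts:
\begin{itemize}
\item $v(s_n)\sim\sigma^2\eee^{n^2-1}$, hence $\log\log v(s_n)\sim 2\log n$;
\item by Chebyshev's inequality, $\mmp\{|R_n|>\varepsilon (v(s_n))^{1/2}\}=O(\varepsilon^{-2}\eee^{-2n})$, which is summable, so Borel--Cantelli yields $R_n/(v(s_n))^{1/2}\to0$ a.s.
\end{itemize}
Therefore
\[
\frac{X(s_n)}{(v(s_n))^{1/2}}=\frac{\eta_{k_n}}{\sigma}+o(1)\quad\text{a.s.},
\]
and the $\eta_{k_n}$, $n\in\mn$, are i.i.d. copies of $\eta$. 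The construction depends only on the variance of $\eta$, so the same $(a_k)$ works in parts (a)--(c).

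\textbf{The three cases.}
\begin{itemize}
\item[(a)] For i.i.d. standard normal $Z_n$, both Borel--Cantelli lemmas together with Mills' ratio give $\limsup_n Z_n/(2\log n)^{1/2}=1$ a.s., and symmetrically for the $\liminf$. Dividing by $(2\log\log v(s_n))^{1/2}\sim(4\log n)^{1/2}$ produces the constants $\pm 2^{-1/2}$ in \eqref{main1.1sup} and \eqref{main1.1inf}.
\item[(b)] Here $\eta_{k_n}/\sigma=\pm1$ with probability $1/2$ each, independently. By the second Borel--Cantelli lemma each sign occurs infinitely often, which gives \eqref{main1.2sup} and \eqref{main1.2inf}.
\item[(c)] With $\xi_n:=\eta_{k_n}+\sigma$ we have $\mmp\{\xi_n/\sigma>x\}=\eee^{-x}$. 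Borel--Cantelli gives $\limsup_n \xi_n/(\sigma\log n)=1$ a.s. Dividing by $\log\log v(s_n)\sim2\log n$ yields the constant $1/2$ in \eqref{main1.3sup}. For the $\liminf$, note that $\eta/\sigma\ge-1$ always, while $\mmp\{\xi<\varepsilon\sigma\}>0$ for every $\varepsilon>0$. Hence $\eta_{k_n}/\sigma<-1+\varepsilon$ infinitely often, which gives \eqref{main1.3inf}.
\end{itemize}

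\textbf{Main obstacle.} The only delicate point is the choice of $(k_n)$ and $(a_{k_n})$. Three requirements must hold at once:
\begin{itemize}
\item the earlier terms ($m<n$) must be negligible in variance, which forces $\eee^{n^2}$ to grow fast enough;
\item the later terms ($m>n$) must be killed by the factor $k_m^{-2s_n}$, which forces $k_n$ to be doubly exponential in $n^2$;
\item $\log\log v(s_n)$ must be asymptotic to exactly $2\log n$, which produces the constant $2^{-1/2}$.
\end{itemize}
If this last asymptotic were instead $3\log n$, for example with weight $\eee^{n^3}$, the constant in (a) would become $3^{-1/2}$. The choice $\eee^{n^2}$ is what matches the prescribed $s_n$ and the stated constants.
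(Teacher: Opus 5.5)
Your proposal is correct and follows essentially the same route as the paper. It uses the same sparse construction $k_n=\lfloor\exp(\eee^{n^2})\rfloor$, $a_{k_n}^2=k_n\eee^{n^2}$, the same variance estimate showing that the $n$th term dominates at $s_n$ (so $v(s_n)\sim\sigma^2\eee^{n^2-1}$), Chebyshev plus Borel--Cantelli to discard the remainder, and then the classical i.i.d.\ Borel--Cantelli limits for the three distributions. The differences are cosmetic: the paper normalizes by the exact coefficient of $\eta_{\lfloor k_n\rfloor}$ rather than by $(v(s_n))^{1/2}$, and it controls the floor explicitly via $\lfloor k_j\rfloor\geq(1-\eee^{-\eee})k_j$; your argument absorbs both into $(1+o(1))$ factors, which is harmless in all three parts.
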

\begin{thm}\label{thm:main2}
Assume that $\eta$ has a standard normal distribution and set $s_n=2^{-1}\eee^{-n^2}$ for $n\in\mn$. Then there exists a sequence $(a_k)$ satisfying \eqref{eq:standing} for which
    \begin{equation}\label{main2sup}
        \limsup_{n\to\infty}\frac{\log|X(s_n)|-n\log 2}{\log\log n}=\frac{1}{2}\quad\text{{\rm a.s.}}
    \end{equation}
    and
    \begin{equation}\label{main2inf}
        \liminf_{n\to\infty}\frac{\log|X(s_n)|-n\log 2}{\log n}=-1\quad\text{{\rm a.s.}}
    \end{equation}
\end{thm}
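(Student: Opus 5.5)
The idea is to let $X(s_n)$ be governed by an explicit lacunary sequence of single weights. Put $k_n:=\lfloor \exp(\eee^{n^2})\rfloor$ and $a_{k_n}:=2^n\eee^{1/2}k_n^{1/2}$, with $a_k=0$ for all other $k$. Then
$$X(s)=\sum_{m\ge1}2^m\eee^{1/2}\eta_{k_m}k_m^{-s}.$$
Condition \eqref{eq:standing} is immediate. On the one hand, $\sum_k a_k^2/k=\eee\sum_m 4^m=\infty$. On the other hand, for fixed $s>0$ the factors $k_m^{-2s}\approx\exp(-2s\eee^{m^2})$ kill the growth of $4^m$.

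At $s=s_n=2^{-1}\eee^{-n^2}$ we have $k_m^{-s_n}\approx\exp(-\eee^{m^2-n^2}/2)$. This gives a three-way split:
\begin{itemize}
\item the term $m=n$ equals $2^n\eta_{k_n}$ (up to a factor $1+o(1)$);
\item for $m<n$ the factor is $1-O(\eee^{-2n+1})$;
\item for $m>n$ the factor is at most $\exp(-\eee^{2n+1}/2)$.
\end{itemize}
Hence
$$2^{-n}X(s_n)=\eta_{k_n}+W_{n-1}+R_n,\qquad W_{n-1}:=\sum_{m<n}2^{m-n}\eee^{1/2}\eta_{k_m},$$
where $R_n$ is Gaussian with variance $O(\eee^{-n})$. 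By the Borel--Cantelli lemma, $|R_n|\le n^{-3}$ eventually a.s. So everything reduces to the process $Z_n:=\eta_{k_n}+W_{n-1}$. The key structural facts are the following.
\begin{itemize}
\item $W_{n-1}$ is $\mathcal F_{n-1}$-measurable, where $\mathcal F_{n}=\sigma(\eta_{k_1},\dots,\eta_{k_n})$.
\item $\eta_{k_n}$ is standard normal and independent of $\mathcal F_{n-1}$.
\item $W_n=\tfrac12\big(W_{n-1}+\eee^{1/2}\eta_{k_n}\big)$, so $(W_n)$ is a stationary-type Gaussian AR(1) chain with coefficient $1/2$ and bounded variance $\eee/3$.
\end{itemize}

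For \eqref{main2sup} I would argue as follows. The upper bound comes from Gaussian tails and the first Borel--Cantelli lemma: $\mathrm{Var}\,Z_n$ is bounded, so $|Z_n|\le C(\log n)^{1/2}$ eventually, and therefore $\log|X(s_n)|-n\log2\le \tfrac12\log\log n+O(1)$. For the lower bound I would use L\'evy's conditional Borel--Cantelli lemma. Conditionally on $\mathcal F_{n-1}$, with probability at least $\tfrac12\mmp\{|\eta|>(\varepsilon\log n)^{1/2}\}\ge n^{-\varepsilon'}$, the variable $\eta_{k_n}$ has the same sign as $W_{n-1}$ and modulus exceeding $(\varepsilon\log n)^{1/2}$. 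On that event $|Z_n|\ge(\varepsilon\log n)^{1/2}$. The conditional probabilities are not summable, so this happens infinitely often. Since $\log(\varepsilon\log n)^{1/2}=\tfrac12\log\log n+O(1)$, the limsup equals $1/2$.

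For \eqref{main2inf}, the lower bound is again easy. The conditional density of $Z_n$ given $\mathcal F_{n-1}$ is bounded by $(2\pi)^{-1/2}$, so $\mmp\{|Z_n|\le 2n^{-1-\varepsilon}\}\le Cn^{-1-\varepsilon}$, which is summable. Together with $|R_n|\le n^{-3}$, this gives $\log|X(s_n)|-n\log 2\ge-(1+\varepsilon)\log n$ eventually.

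The main obstacle is the upper bound, namely that $|Z_n|\le n^{-1}$ infinitely often. Here $W_{n-1}$ is not small, so I would again use L\'evy's lemma. We have
$$\mmp\{|Z_n|\le n^{-1}\mid\mathcal F_{n-1}\}\ge c\,n^{-1}\1\{|W_{n-1}|\le1\},$$
so it suffices to show that $\sum_n n^{-1}\1\{|W_{n-1}|\le 1\}=\infty$ a.s. For this I would use the ergodic theorem for the positive-recurrent Gaussian chain $(W_n)$: started from its stationary law $N(0,\eee/3)$, or coupled to it exponentially fast, it gives $N^{-1}\sum_{n\le N}\1\{|W_n|\le1\}\to p>0$ a.s. Abel summation then turns this into divergence of the harmonic-weighted sum. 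Finally $|R_n|\le n^{-3}$ is absorbed, which yields the liminf $-1$.
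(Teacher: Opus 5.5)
Your proof is correct, but it takes a genuinely different route from the paper's. Up to the harmless factor $\eee^{1/2}$, your weights coincide with the paper's, and your $Z_n=\eta_{k_n}+W_{n-1}$ is just $\eee^{1/2}$ times the paper's $\hat Y_n$. The difference lies in how this sequence is analysed.

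The paper never uses the filtration. It compares $\hat Y_n$ with the stationary two-sided moving average $\hat Z_n=\hat Y_n+2^{-n}W$, a stationary Gaussian sequence with correlations $c2^{-k}$. The bound $\liminf\le -1$ then comes from the Kochen--Stone lemma, with $\mmp(A_i\cap A_j)$ controlled through explicit bivariate normal densities. The bound $\limsup\ge 1/2$ comes from Pickands' theorem on the almost sure growth of $\max_{k\le n}\hat Z_k$, plus an argmax argument. The floors in $k_j$ are removed by a separate $L^1$ reduction and a distributional identity.

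You instead use the predictable decomposition into a fresh innovation $\eta_{k_n}$, independent of $\mathcal F_{n-1}$, plus an $\mathcal F_{n-1}$-measurable AR(1) term. This lets both nontrivial bounds follow from L\'evy's conditional Borel--Cantelli lemma:
\begin{itemize}
\item for the limsup, via the sign-matching event;
\item for the liminf, via the conditional density lower bound on $\{|W_{n-1}|\le 1\}$, combined with the ergodic theorem and Abel summation.
\end{itemize}
The two easy bounds come from a conditional density bound and Gaussian tails, and you absorb the floors directly into $R_n$.

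Your route is shorter and more self-contained. It avoids extreme-value theory for stationary Gaussian sequences and the pairwise correlation estimates, and it uses only one-dimensional Gaussian facts. It also makes transparent that the result is driven by the independent innovation entering at each step. The paper's route yields slightly more, namely the almost sure asymptotics of the running maxima of $\hat Z_n$, but this is not needed for the theorem.
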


In the settings of Theorems~\ref{thm:main1} and \ref{thm:main2}, the $\limsup$ ($\liminf$) is governed by one or a few exceptionally large (small) summands rather than by the cumulative contribution of many small ones. Consequently, the failure of the standard LIL is not surprising. One possible explanation is that the partial sums $\sum_{k\leq n}k^{-1}a_k^2$ exhibit highly irregular growth. It is therefore natural to ask whether imposing a smoothing condition
\begin{equation}\label{eq:basic}
\sum_{k\leq n}\frac{a_k^2}{k}~\sim~c(\log n)^\beta, \quad n\to\infty
\end{equation}
for some $c>0$ and $\beta>0$ is sufficient to restore a standard LIL. At first glance, condition \eqref{eq:basic} appears to rule out the possibility that a single exceptionally large summand dominates $X(s)$. Our next result shows that this intuition is false.
\begin{thm}\label{thm:counterexample3}
Fix $\beta>0$. Assume that $\me [\eta]=0$, ${\rm Var}[\eta]\in (0,\infty)$ and
\begin{equation}\label{eq:tail}
\mmp\{\eta>x\}~\sim~\frac{1}{(x\log x)^2},\quad x\to\infty.
\end{equation}
Then there exists a sequence $(a_k)$ satisfying \eqref{eq:standing} and  \eqref{eq:basic} for which $$\limsup_{s\to 0+}\frac{s^{\beta/2}}{(\log\log 1/s)^{1/2}} X(s)=+\infty\quad\text{{\rm a.s.}}$$
\end{thm}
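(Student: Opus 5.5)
The plan is to take a ``regular'' sequence whose random Dirichlet series already satisfies the LIL of Proposition~\ref{prop:bura} with normalization $s^{-\beta/2}(\log\log 1/s)^{1/2}$. On a very sparse set of indices $k_j$ I would overwrite it with ``spikes''. The spikes are small enough not to disturb \eqref{eq:basic}, yet large enough that the heavy tail \eqref{eq:tail} makes some spike dominate infinitely often.

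\textbf{Construction.} Put $\alpha=(\beta-1)/2>-1/2$, let $k_j:=\lfloor\exp(\eee^{j^2})\rfloor$, and set $b_j:=\eee^{\beta j^2}/\log j$. Define $a_{k_j}:=(k_j b_j)^{1/2}$, and $a_k:=(\log k)^\alpha$ for $k\notin\{k_j\}$.

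\emph{Condition \eqref{eq:basic}.} The regular part gives $\sum_{k\le n}a_k^2/k\sim \beta^{-1}(\log n)^\beta$ by \eqref{eq:basic2}. The removed terms $(\log k_j)^{2\alpha}/k_j$ are summable. The spikes contribute $\sum_{j:k_j\le n}b_j$. Because $b_j$ grows super-geometrically, this sum is dominated by its last term $J$, and
\[
b_J=\frac{\eee^{\beta J^2}}{\log J}\le \frac{(\log n)^\beta}{\log J}=o\big((\log n)^\beta\big).
\]

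\emph{Condition \eqref{eq:standing}.} For fixed $s>0$ we have $\sum_j b_jk_j^{-2s}=\sum_j b_j\exp(-2s\,\eee^{j^2})<\infty$, so the second part holds. The first part follows from \eqref{eq:basic}.

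\textbf{Decomposition.} Evaluate along $s_j:=1/\log k_j\approx \eee^{-j^2}$, so that $\log\log 1/s_j\sim 2\log j$ and $s_j^{-\beta/2}=\eee^{\beta j^2/2}$. Write $X(s_j)=R(s_j)+\sum_i b_i^{1/2}k_i^{-s_j}\eta_{k_i}$, where $R$ is the sum over non-spike indices.

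\emph{Regular part.} $R$ differs from the series in Proposition~\ref{prop:bura} by the omitted terms $(\log k_i)^\alpha k_i^{-1/2-s}\eta_{k_i}$. These form an a.s.\ absolutely convergent series, uniformly in $s$, because their variances are summable. Hence $|R(s_j)|=O\big(s_j^{-\beta/2}(\log\log 1/s_j)^{1/2}\big)=O(\eee^{\beta j^2/2}(\log j)^{1/2})$ a.s. by \eqref{eq:limsup} (applied to $\pm\eta$).

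\emph{Earlier spikes $i<j$.} Since $\me|\eta|<\infty$ gives $\sum_i\mmp\{|\eta|>i\}<\infty$, Borel--Cantelli yields $|\eta_{k_i}|\le i$ eventually. Then $\sum_{i<j}b_i^{1/2}|\eta_{k_i}|\le C+j^2\eee^{\beta (j-1)^2/2}$, which is $o(\eee^{\beta j^2/2})$.

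\emph{Later spikes $i>j$.} Here $k_i^{-s_j}=\exp(-\eee^{i^2-j^2})$, and the same bound $|\eta_{k_i}|\le i$ makes their total contribution $o(1)\cdot\eee^{\beta j^2/2}$.

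\emph{The $j$-th spike.} It equals $\eee^{-1}b_j^{1/2}\eta_{k_j}$, up to a factor $1+o(1)$ from the integer part.

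\textbf{Key step (heavy tail).} It remains to show that for every $M>0$,
\[
b_j^{1/2}\eta_{k_j}>M\eee\, \eee^{\beta j^2/2}(\log j)^{1/2}\quad\text{infinitely often a.s.},
\]
that is, $\eta_{k_j}>M\eee\log j$ infinitely often. The variables $\eta_{k_j}$ are independent, and by \eqref{eq:tail}
\[
\mmp\{\eta>M\eee\log j\}\sim \frac{c_M}{(\log j)^2(\log\log j)^2},
\]
which is not summable. The second Borel--Cantelli lemma then gives the claim. Combining this with the negligible remaining terms, $s_j^{\beta/2}(\log\log 1/s_j)^{-1/2}X(s_j)\ge M/2$ infinitely often for every $M$, which is the assertion.

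\textbf{Main obstacle.} The delicate step is choosing the spike sizes $b_j$. They must be $o\big((\log k_j)^\beta\big)$ so as not to affect \eqref{eq:basic}. At the same time, the deficit $(\log k_j)^\beta/b_j$ times $\log\log 1/s_j$ must grow slowly enough that the tail \eqref{eq:tail} still gives a divergent Borel--Cantelli sum. The factor $1/\log j$ balances exactly these two requirements. The very sparse choice $\log k_j=\eee^{j^2}$ is what keeps the neighbouring spikes from interfering.
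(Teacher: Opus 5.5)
Your proposal is correct and follows the paper's proof essentially step for step. Both use the same sparse perturbation set $k_j=\lfloor\exp(\eee^{j^2})\rfloor$ on top of the regular weights $(\log k)^{(\beta-1)/2}$, evaluate at $s\approx \eee^{-j^2}$, control the regular part via Proposition~\ref{prop:bura}, and apply the divergence half of Borel--Cantelli with the heavy tail \eqref{eq:tail} to the dominant spike. The differences are cosmetic: your spike size $a_{k_j}^2/k_j=\eee^{\beta j^2}/\log j$ replaces the paper's $\eee^{\beta j^2}/(2\log j)^{1/2}$, so you need $\eta_{k_j}/\log j$ rather than $\eta_{k_j}/(\log j)^{3/4}$ to be unbounded, and you control the non-dominant spikes pathwise via $|\eta_{k_i}|\le i$ eventually instead of the paper's variance bound with Markov and Borel--Cantelli. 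One small fix: start the spikes at $j\ge 2$, since $b_1$ involves $\log 1=0$.
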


After a series of `negative' results, we present a `positive' result giving sufficient conditions on $(a_k)$ that ensure a standard LIL. We shall write $\log^{(3)}x$ for $\log\log\log x$. Put $$\mathcal{S}:=\{k\geq 16: 
a_k\neq 0\}.$$ Since we shall normally
work under assumption \eqref{eq:basic}, the set $\mathcal{S}$ is
automatically infinite.
We introduce the following

\noindent {\sc Condition A}. As $x\to\infty$, $$\#\Big\{k\in\mathcal{S}:~\frac{k(\log k)^\beta}{a_k^2(\log\log k)^2\log^{(3)}k}\leq x\Big\}=O(x).$$

\begin{thm}\label{thm:main4}
Assume that $\me [\eta]=0$, $\sigma^2={\rm Var}[\eta]\in (0,\infty)$ and that $(a_k)$ satisfies \eqref{eq:basic} and Condition A. Then
\begin{equation*}
C\Big(\Big(\Big(\frac{1}{(2v(s)\log\log v(s))^{1/2}}\sum_{k\geq 1} \frac{a_k \eta_k}{k^{1/2+s}}: s>0~\text{sufficiently close to}~ 0 \Big)\Big)=[-1,1] \quad\text{\rm a.s.}
\end{equation*}
and $$v(s)~\sim~\frac{\sigma^2 c\Gamma(1+\beta)}{(2s)^\beta},\quad s\to 0+.$$
In particular,
\begin{equation*}
\limsup_{s\to 0+}\frac{1}{(2v(s)\log\log v(s))^{1/2}}\sum_{k\geq 1} \frac{a_k \eta_k}{k^{1/2+s}}=1 \quad\text{\rm a.s.}
\end{equation*}
and
\begin{equation*}
\liminf_{s\to 0+}\frac{1}{(2v(s)\log\log v(s))^{1/2}}\sum_{k\geq 1} \frac{a_k \eta_k}{k^{1/2+s}}=-1 \quad\text{\rm a.s.}
\end{equation*}
\end{thm}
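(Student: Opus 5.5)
Throughout, put $A(t):=\sum_{k\leq t}a_k^2/k$, so that \eqref{eq:basic} reads $A(t)\sim c(\log t)^\beta$.

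\emph{Variance asymptotics.} Write
\[
v(s)=\sigma^2\sum_k a_k^2k^{-1-2s}=\sigma^2\int_1^\infty t^{-2s}\,dA(t).
\]
Substituting $t=\eee^u$ turns this into a Laplace--Stieltjes transform in $u$ of a function that is asymptotically $cu^\beta$. The Abelian (Karamata) theorem then gives $v(s)\sim\sigma^2c\Gamma(1+\beta)(2s)^{-\beta}$. This is routine.

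\emph{Truncation.} The plan for the cluster set is to reduce to a Gaussian-type problem after a truncation suggested by Condition A. Set $b_k:=a_k^2/k$ and $\ell_k:=(\log k)^\beta/(\log\log k)^2\log^{(3)}k$, so that Condition A says $\#\{k\in\mathcal S: \ell_k/b_k\le x\}=O(x)$. Split
\[
\eta_k=\eta_k\1_{\{|\eta_k|\le \tau_k\}}-\me[\eta_k\1_{\{|\eta_k|\le\tau_k\}}]+R_k,
\]
with the truncation level $\tau_k:=\big(\ell_k/b_k\big)^{1/2}$ times a small power of $\log\log k$, chosen so that two things hold. First, the large-jump part $\sum_k k^{-1/2}|a_k|\,|\eta_k|\1_{\{|\eta_k|>\tau_k\}}$ (together with the centering correction) should be small. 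Since $k^{-1/2-s}\le k^{-1/2}$, its contribution to $X(s)$ is then $o((v(s)\log\log v(s))^{1/2})$ uniformly in $s$, via Kronecker's lemma. Second, the bounded part should satisfy Kolmogorov-type exponential bounds, which requires $|a_k|k^{-1/2}\tau_k=o\big((v(s)/\log\log v(s))^{1/2}\big)$ for the relevant $k\approx \eee^{1/s}$.

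The key point is the summability $\sum_k\mmp\{|\eta|>\tau_k\}<\infty$. Ordering the indices in $\mathcal S$ by the value $r_k:=\ell_k/b_k$, Condition A says the $j$-th smallest value is $\gtrsim j$. Hence
\[
\sum_k\mmp\{\eta^2>r_k\cdot\text{(slack)}\}\lesssim\sum_j\mmp\{\eta^2>j\}\le \me[\eta^2]<\infty,
\]
and Borel--Cantelli shows that only finitely many large jumps occur. The finitely many indices where a large jump does occur contribute $O(1)=o(v^{1/2})$. The centering terms are controlled by $\me[|\eta|\1_{\{|\eta|>\tau\}}]\le\sigma^2/\tau$ and summed using \eqref{eq:basic}.

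\emph{LIL for the truncated series.} Discretize $s$ along $s_n=\rho^{n}$, or better along a sequence with $v(s_n)\approx \theta^n$ for $\theta>1$. For the upper bound, use the exponential inequality for bounded independent summands at the points $s_n$ with threshold $(1+\varepsilon)(2v\log\log v)^{1/2}$, followed by Borel--Cantelli. Then control the oscillation between grid points. Since $X(s)$ is a smooth function of $s$, the increment $X(s)-X(s_n)$ for $s\in[s_{n+1},s_n]$ is a series with coefficients $a_kk^{-1/2}(k^{-s}-k^{-s_n})$, whose variance is at most $\delta(\theta)\,v(s_n)$ with $\delta(\theta)\to0$ as $\theta\to1$, by the variance asymptotics. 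A maximal inequality handles the supremum over the interval, for instance a chaining argument or Lévy-type bounds using monotonicity in $s$ after splitting $k$ into blocks.

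For the lower bound, take a sparser grid $v(s_n)\approx \theta^n$ with $\theta$ large. Decompose $X(s_n)$ into the block $k\in(\eee^{1/(Ms_{n-1})},\eee^{M/s_n}]$, which carries nearly all the variance, plus remainders. Because $\theta\gg1$ the blocks are disjoint and hence independent. The lower exponential bound (Kolmogorov's converse inequality), which again needs the truncation level to be $o\big((v/\log\log v)^{1/2}\big)$, together with the second Borel--Cantelli lemma gives $\limsup\ge1-\varepsilon$. Symmetry yields the $\liminf$. Every point of $[-1,1]$ is then a limit point, because $s\mapsto X(s)/(2v\log\log v)^{1/2}$ is continuous on $(0,\infty)$, so its cluster set is connected.

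\emph{Main obstacle.} I expect the hardest step to be calibrating the truncation: Condition A must simultaneously give Borel--Cantelli summability and make the retained summands uniformly small. The factor $(\log\log k)^2\log^{(3)}k$ in Condition A is exactly the slack needed. Compared with $v\sim s^{-\beta}$ at $k\approx\eee^{1/s}$, it gives a summand bound of order
\[
\Big(\frac{(\log k)^\beta}{(\log\log k)^2\log^{(3)}k}\Big)^{1/2}=o\Big(\big(v/\log\log v\big)^{1/2}\Big),
\]
with a spare factor of $(\log\log k\,\log^{(3)}k)^{1/2}$. That spare factor must absorb the slack used in the truncation level. I would first try $\tau_k^2=r_k$ exactly and check whether the remaining factor still suffices in the exponential bounds. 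If it does not, I would use a Fuk--Nagaev inequality instead of a crude sup bound.
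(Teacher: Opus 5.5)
Your architecture matches the paper's: truncation justified through Condition~A and Borel--Cantelli, exponential bounds on a grid plus chaining for the upper bound, independent blocks with the converse Borel--Cantelli lemma for the lower bound, and continuity for the cluster set. Your $s$-independent truncation at $\tau_k=r_k^{1/2}$ is a reasonable alternative to the paper's $s$-dependent level (your $r_k$ is exactly the paper's $Q_k$). If it works, it removes the cutoff $N(s)=\lfloor 1/s\rfloor$, Lemma~\ref{lem:prelim}, Tomkins' LIL and the terms $J_{n,1},J_{n,2},J_{n,3}$ of Lemma~\ref{5}. However, the centering step fails as you wrote it. Using $\me[|\eta|\1_{\{|\eta|>\tau_k\}}]\le\sigma^2/\tau_k$ with $\tau_k=r_k^{1/2}$ and summing via \eqref{eq:basic} gives
\[
\sum_{k\in\mathcal S}\frac{|a_k|}{k^{1/2+s}\,\tau_k}=\sum_{k\in\mathcal S}\frac{a_k^2}{k^{1+s}}\cdot\frac{\log\log k\,(\log^{(3)}k)^{1/2}}{(\log k)^{\beta/2}}\asymp s^{-\beta/2}\log(1/s)\,(\log\log(1/s))^{1/2}.
\]
This exceeds $(v(s)\log\log v(s))^{1/2}\asymp s^{-\beta/2}(\log\log(1/s))^{1/2}$ by a factor $\log(1/s)$, already for $a_k^2=(\log k)^{\beta-1}$. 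No slack repairs this. The crude bound would need $\tau_k\gg r_k^{1/2}\log\log k$. Both exponential inequalities need $|a_k|k^{-1/2-s}\tau_k=o((v/\log\log v)^{1/2})$, i.e.\ $\tau_k=o(r_k^{1/2}\log\log k)$ at $\log k\approx1/s$. (The spare factor there is $\log\log k$, not $(\log\log k\,\log^{(3)}k)^{1/2}$.) A Fuk--Nagaev inequality does not help either, since the offending term is deterministic.

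The missing idea is to use Condition~A for the centering as well, not only for the large jumps. Since $\sup_{x>0}x^{\beta/2}\eee^{-x}<\infty$, one has $s^{\beta/2}|a_k|k^{-1/2-s}\le C r_k^{-1/2}$ uniformly in $s$ for $k\in\mathcal S$. Enumerate $\mathcal S$ by increasing $r_k$; the $j$th value is at least $cj$ for large $j$. Comparing with $\sum_j j^{-1/2}\me[|\eta|\1_{\{|\eta|>(cj)^{1/2}\}}]\le C\me[\eta^2]$ gives
\[
s^{\beta/2}\sum_{k}\frac{|a_k|}{k^{1/2+s}}\,\me\big[|\eta|\1_{\{|\eta|>\tau_k\}}\big]\le C\sum_{k\in\mathcal S}r_k^{-1/2}\,\me\big[|\eta|\1_{\{|\eta|>r_k^{1/2}\}}\big]+o(1)\le C'\big(1+\me[\eta^2]\big).
\]
So the centering is $O(s^{-\beta/2})=o((v\log\log v)^{1/2})$; this is exactly how the paper proves \eqref{eq:event2}. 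With this fix your route goes through, after three smaller corrections:
\begin{itemize}
\item Check the summand bound for all $k$, not only $k\approx\eee^{1/s}$; split at $\log k=s^{-1/2}$ as in the proof of \eqref{eq:maximal-summand}.
\item Take the lower-bound blocks as $(\eee^{1/(Ms_n)},\eee^{M/s_n}]$ with $s_{n-1}/s_n>M^2$. As you wrote them, consecutive blocks overlap.
\item In the chaining, choose the exponential parameter independently of the dyadic level $j$, as in the paper's proof of \eqref{eq:5}. Otherwise $u\max_k|\text{summand}|$ grows with $j$.
\end{itemize}
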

\begin{remark}\label{rem:suff}
A convenient sufficient criterion for Condition~A is the following

\noindent {\sc Condition B}. There exist a positive function $\varphi$ and a constant $C>0$ such that

\noindent (a) $\rho(x):=\#\{k\in\mathcal{S}: k\leq x\}=O(x/\varphi(x))$ as $x\to\infty$;

\noindent (b) $a_k^2 \leq C \frac{(\log k)^\beta \varphi(k)}{(\log\log k)^2\log^{(3)} k}$ for large $k\in\mathcal{S}$.

Sufficiency will be proved at the end of Section \ref{sect:suff}.

In Section \ref{sect:applications}, we provide a number of examples of $(a_k)$ which satisfy Condition~B, hence also Condition~A, as well as an example of $(a_k)$ which satisfies Condition~A but does not satisfy Condition~B.
\end{remark}

\section{Proofs of Theorems \ref{thm:main1}, \ref{thm:main2} and \ref{thm:counterexample3}}\label{sect:main}
\begin{proof}[Proof of Theorem \ref{thm:main1}]
Until further notice we treat all the parts simultaneously and assume that $\eta$ has an arbitrary distribution with mean zero and variance $\sigma^2\in (0,\infty)$. We construct $(a_k)$ to create exactly one dominating summand of the series  defining $X(s)$.

Let $k_j:=\exp(\eee^{j^2})$ for $j\in\mn$ and
\begin{equation}\label{a_k_1}
    a_k:=\begin{cases}(\eee^{j^2}\lfloor k_j\rfloor)^{1/2}, & \quad\text{for}\quad k=\lfloor k_j\rfloor,\quad j\in\mn,\\0, & \quad\text{for other}\quad k.\end{cases}
\end{equation}
Observe for later use that
\begin{equation}\label{eq:ineq}
\lfloor k_j\rfloor>(1-k_j^{-1})k_j\geq (1-\eee^{-\eee})k_j,\quad j\in\mn.
\end{equation}
We first show that $(a_k)$ satisfies \eqref{eq:standing}:
\begin{equation*}
    \sum_k\frac{a_k^2}{k}=\sum_{j\ge 1}\frac{\eee^{j^2}\lfloor k_j\rfloor}{\lfloor k_j\rfloor}=\sum_{j\ge 1}\eee^{j^2}=\infty
\end{equation*}
and, for any $s>0$,
\begin{equation*}
    \sum_k\frac{a_k^2}{k^{1+2s}}=\sum_{j\ge 1}\frac{\eee^{j^2}
    \lfloor k_j\rfloor}{\lfloor k_j\rfloor^{1+2s}}\leq 
    (1-\eee^{-\eee})^{-2s}\sum_{j\ge1}\exp(j^2-2s \eee^{j^2})
    <\infty
\end{equation*}
having utilized \eqref{eq:ineq}.

By Lemma \ref{lemma:main1.Var}, $$\lim_{n\to\infty}\big(\eee^{-n^2/2+s_n\log\lfloor k_n\rfloor}X(s_n)-\eta_{\lfloor k_n\rfloor}\big)=0\quad\text{a.s.}$$ Since $\lim_{n\to\infty}s_n\log \lfloor k_n\rfloor=1/2$, and the sequences $(\eta_n)_{n\geq 1}$ and $(\eta_{\lfloor k_n\rfloor})_{n\geq 1}$ have the same distribution it remains to show that
\begin{equation*}
\limsup_{n\to\infty}\frac{\eta_n}{(2\log n)^{1/2}}=\sigma\quad\text{and}\quad \liminf_{n\to\infty}\frac{\eta_n}{(2\log n)^{1/2}}=-\sigma\quad\text{a.s.}
\end{equation*}
for part (a),
\begin{equation*}
\limsup_{n\to\infty}\eta_n=\sigma\quad\text{and}\quad \liminf_{n\to\infty}\eta_n =-\sigma\quad\text{a.s.}
\end{equation*}
for part (b), and
\begin{equation*}
      \limsup_{n\to\infty}\frac{\eta_n}{\log n}=\sigma\quad\text{and}\quad \liminf_{n\to\infty}\eta_n =-\sigma \quad\text{a.s.}
\end{equation*}
for part (c).
The proof is a standard exercise on an application of the Borel-Cantelli lemma, see, for instance, p.~109 in \cite{Gut:2005} for the case where $\eta$ has a normal distribution, and we omit details.

By Lemma \ref{lemma:main1.Var}, $v(s_n)\sim\sigma^2 \eee^{n^2-1}$ as $n\to\infty$. Hence, the normalizations in parts (a), (b) and (c) are indeed as stated in the theorem.
\end{proof}

\begin{proof}[Proof of Theorem \ref{thm:main2}]
We construct $(a_k)$ to create a group of dominating summands of the series  defining $X(s)$ and a group of negligible summands.

Let $k_j:=\exp(\eee^{j^2})$ and
\begin{equation}\label{a_k_2}
    a_k:=\begin{cases}(4^j\lfloor k_j\rfloor)^{1/2}, & \quad\text{for}\quad k=\lfloor k_j\rfloor,\quad j\in\mn,\\0, &\quad\text{for other}\quad k.\end{cases}
\end{equation}
The $(a_k)$ so defined satisfies \eqref{eq:standing} as follows from
\begin{equation*}
    \sum_k\frac{a_k^2}{k}=\sum_{j\ge 1}\frac{4^j\lfloor k_j\rfloor}{\lfloor k_j\rfloor}=\sum_{j\ge 1}4^j=\infty
\end{equation*}
and, for any $s>0$,
\begin{equation*}
    \sum_k\frac{a_k^2}{k^{1+2s}}=\sum_{j\ge 1}\frac{4^j\lfloor k_j\rfloor}{\lfloor k_j\rfloor^{1+2s}}\leq (1-\eee^{-\eee})^{-2s} \sum_{j\ge1}\frac{4^j}{(\exp(\eee^{j^2}))^{2s}}<\infty.
\end{equation*}
We have used \eqref{eq:ineq} for the penultimate inequality.

Put $\bar X(s):=\sum_{j\ge 1}k_j^{-s}2^j \eta_j$ for $s>0$. We first prove the theorem with $$\bar X(s_n)=\sum_{j\ge 1}2^j\exp(-\eee^{j^2-n^2}/2)\eta_j$$ replacing $X(s_n)$. Put
\begin{equation*}
Y_n:=\sum_{j=1}^n 2^j\exp\big(-\eee^{j^2-n^2}/2\big)\eta_j\quad \text{and}\quad Z_n:=\sum_{j\ge n+1}2^j\exp\big(-\eee^{j^2-n^2}/2\big)\eta_j,\quad n\in\mn.
\end{equation*}
Then $$\log |\bar X(s_n)2^{-n}|=\log |Y_n 2^{-n}|+\log \Big|1+\frac{Z_n}{Y_n}\Big|.$$ By Corollary \ref{lemma:main2.D}, relations \eqref{main2.Dsup} and \eqref{main2.Dinf} hold,
whereas, by Corollary \ref{lemma:main2.CD}, $\log \Big|1+\frac{Z_n}{Y_n}\Big|=o(1)$ as $n\to\infty$ a.s. Hence,
\begin{equation*}
\limsup_{n\to\infty}\frac{\log|\bar X(s_n)|-n\log 2}{\log\log n}=\frac{1}{2}\quad\text{{\rm a.s.}}
\end{equation*}
and
\begin{equation*}
\liminf_{n\to\infty}\frac{\log|\bar X(s_n)|-n\log 2}{\log n}=-1\quad\text{{\rm a.s.}}
\end{equation*}
The sequences $(\bar X(s_n))_{n\geq 1}$ and $(X_1(s_n))$ have the same distribution. Hence, the two latter asymptotic relations hold with $X_1(s_n)$ replacing $\bar X(s_n)$. This entails $\lim_{n\to\infty}n^{-1}\log |X_1(s_n)|=\log 2$ a.s. To show that the same limit relations also hold for $X(s_n)$, observe that, by Lemma \ref{lem:reduction},
$$\lim_{n\to\infty}(X(s_n)-X_1(s_n))=0\quad\text{a.s.}$$ Together with the fact that $|X_1(s_n)|$ diverges to $\infty$ this guarantees $\lim_{n\to\infty}|X(s_n)/X_1(s_n)|=1$ a.s., whence $\lim_{n\to\infty}(\log |X(s_n)|-\log |X_1(s_n)|)=0$ a.s. The proof of Theorem \ref{thm:main2} is complete.
\end{proof}

\begin{proof}[Proof of Theorem \ref{thm:counterexample3}]
We construct $(a_k)$ by introducing a sparse {\it perturbation set} $(k_j)_{j\ge1}$ on which the quantities $k^{-1}a_k^2$ are exceptionally large, thereby violating the uniform smallness of the summands with respect to the variance. The perturbation set is chosen sufficiently sparse to ensure that the asymptotic relation \eqref{eq:basic} is preserved.

Guided by this idea, for $j\in\mn$, put $s_j:=\exp(j^2)$ and $k_j:=\lfloor \exp(s_j)\rfloor=\lfloor \exp(\eee^{j^2})\rfloor$. Now define $a_1=a_{15}=:=0$, $$\frac{a_k^2}{k}:=
\begin{cases}
        \frac{s_j^\beta}{(\log\log s_j)^{1/2}}, &   \text{if} \ k=k_j,~j\geq 2,   \\
        \frac{(\log k)^{\beta-1}}{k}, & \text{if} \ k\neq k_j,~j\in\mn,~k\geq 2.
\end{cases}$$
We first verify that \eqref{eq:basic} holds with this choice of $(a_k)$. To this end, put $m_n:=\max\{j\in\mn: k_j\leq n\}$ for $n\in\mn$, so that $$\sum_{k_j\leq n}\frac{a_{k_j}^2}{k_j}=\sum_{j\leq m_n}\frac{s_j^\beta}{(\log\log s_j)^{1/2}}.$$ Since $s_j^\beta=\exp(\beta j^2)$ satisfies $\lim_{j\to\infty}(s_j/s_{j-1})^\beta=\infty$, the last sum is dominated by the term with $j=m_n$ in the sense that $$\sum_{j\leq m_n}\frac{s_j^{\beta}}{(\log\log s_j)^{1/2}}~\sim~ \frac{s_{m_n}^{\beta}}{(\log\log s_{m_n})^{1/2}},\quad n\to\infty.$$ Further, $s_{m_n}=\exp(m_n^2)\leq \log (n+1)$ and thereupon $$\frac{s_{m_n}^{\beta}}{(\log\log s_{m_n})^{1/2}}\leq \frac{(\log(n+1))^{\beta}}{(\log\log\log (n+1))^{1/2}} =o((\log n)^{\beta}),\quad n\to\infty$$ because the function $x\mapsto x^\beta(\log\log x)^{-1/2}$ is increasing for large $x$. Combining the estimates yields
\begin{equation}\label{eq:basic22}
\sum_{k_j\leq n}\frac{a_{k_j}^2}{k_j}=o((\log n)^{\beta}),\quad n\to\infty.
\end{equation}
Since $j\mapsto k_j^{-1}(\log k_j)^{\beta-1}$ decays superexponentially fast, $$\sum_{k_j\leq n}\frac{(\log k_j)^{\beta-1}}{k_j}=\sum_{j\leq m_n}\frac{(\log k_j)^{\beta-1}}{k_j}=O(1),\quad n\to\infty.$$ This in combination with \eqref{eq:basic2} proves $$\sum_{k\leq n,\, k\neq k_j}\frac{a_k^2}{k}~\sim~\frac{1}{\beta}(\log n)^{\beta},\quad n\to\infty.$$ Recalling \eqref{eq:basic22} we arrive at \eqref{eq:basic}. In particular, the first condition in \eqref{eq:standing} holds.

For each $s>0$,
$$\sum_{k\geq1}\frac{a_k^2}{k^{1+2s}}\leq \sum_{k\geq2}\frac{(\log k)^{\beta-1}}{k^{1+2s}}+\sum_{j\geq 1}\frac{s_j^\beta k_j^{-2s}}{(\log\log s_j)^{1/2}}<\infty.$$
Thus, the second condition in \eqref{eq:standing} also holds.

Next, we decompose $X(s)$ into its contributions from the perturbation set and the complementary {\it normal set} $\{k\in\mn: k\neq k_j,~j\in\mn\}$:
\begin{multline*}
\frac{s^{\beta/2}}{(\log\log 1/s)^{1/2}}X(s)=\frac{s^{\beta/2}}{(\log\log 1/s)^{1/2}}\Big(\sum_{j\geq 1}\eee^{-s\log k_j}\frac{s_j^{\beta/2}\eta_{k_j}}{(\log\log s_j)^{1/4}}+\sum_{k\neq k_j}\frac{(\log k)^{(\beta-1)/2}}{k^{1/2+s}}\eta_k\Big)\\=:Y_1(s)+Y_2(s).
\end{multline*}
Put $s=s_n^{-1}$ and note that $\lim_{n\to\infty} s_n^{-1}\log k_n=1$. Thus, the contribution of the $n$th term in $Y_1(s_n^{-1})$ is $$\sim \eee^{-1}\frac{s_n^{\beta/2}\eta_{k_n}}{(\log\log s_n)^{1/4}}\frac{1}{s_n^{\beta/2}(\log\log s_n)^{1/2}}=\frac{\eee^{-1}\eta_{k_n}}{(2\log n)^{3/4}}.$$ Using \eqref{eq:tail} in combination with the Borel-Cantelli lemma we conclude that $$\limsup_{n\to\infty}\frac{\eta_{k_n}}{(\log n)^{3/4}}=+\infty\quad\text{a.s.}$$ It therefore remains to show that neither $Y_2(s_n^{-1})$ nor the sum of the remaining terms of $Y_1(s_n^{-1})$ can diverge to $-\infty$ almost surely.

By Proposition \ref{prop:bura} with $\alpha=(\beta-1)/2$, $$s\mapsto \frac{s^{\beta/2}}{(\log\log 1/s)^{1/2}}\sum_{k\geq 2}\frac{(\log k)^{(\beta-1)/2}}{k^{1/2+s}}\eta_k$$ remains bounded as $s\to 0+$. The same is true of $Y_2$. Indeed, $$\lim_{s\to 0+}\sum_{j\geq 1}\frac{(\log k_j)^{(\beta-1)/2}}{k_j^{1/2+s}}\eta_{k_j}=\sum_{j\geq 1}\frac{(\log k_j)^{(\beta-1)/2}}{k_j^{1/2}}\eta_{k_j}\quad\text{a.s.},$$ and, in view of $\sum_{j\geq 1}k_j^{-1} (\log k_j)^{\beta-1}<\infty$, the limit random variable is a.s.\ finite.

It remains to estimate the variance of the remaining perturbation terms: $${\rm Var}\Big[\sum_{j\neq n}\eee^{-s_n^{-1}\log k_j}\frac{s_j^{\beta/2}\eta_{k_j}}{(\log\log s_j)^{1/4}}\Big]=\sigma^2\sum_{j\neq n}\eee^{-2s_n^{-1}\log k_j}\frac{s_j^{\beta}}{(\log\log s_j)^{1/2}}.$$ Using $\lim_{n\to\infty}\max_{1\leq j\leq n}\,\eee^{-2s_n^{-1}\log k_j}=1$ for the first inequality we infer
\begin{multline*}
\sum_{j\leq n-1}\eee^{-2s_n^{-1}\log k_j}\frac{s_j^{\beta}}{(\log\log s_j)^{1/2}}\leq 2 \sum_{j\leq n-1}\frac{s_j^{\beta}}{(\log\log s_j)^{1/2}}~\sim~\frac{2s_{n-1}^{\beta}}{(\log\log s_{n-1})^{1/2}}, \quad n\to\infty.
\end{multline*}
The remaining sum is dominated by the $(n+1)$st term which is $$\sim \sigma^2 \exp(-2\eee^{2n+1}+\beta(n+1)^2)(2\log(n+1))^{-1/2}=o(1),\quad n\to\infty.$$ Consequently, $${\rm Var}\Big[\sum_{j\neq n}\eee^{-s_n^{-1}\log k_j}\frac{s_j^{\beta/2}\eta_{k_j}}{(\log\log s_j)^{1/4}}\Big]=O\Big(\frac{s_{n-1}^{\beta}}{(\log\log s_{n-1})^{1/2}}\Big),\quad n\to\infty.$$ Since $((s_{n-1}/s_n)^{\beta})=(\eee^{-\beta(2n-1)})$ is a summable sequence, we conclude that $$\lim_{n\to\infty}\frac{1}{s_n^{\beta/2}(\log\log s_n)^{1/2}} \sum_{j\neq n}\eee^{-s_n^{-1}\log k_j}\frac{s_j^{\beta/2}\eta_{k_j}}{(\log\log s_j)^{1/4}}=0\quad \text{a.s.}$$ having utilized Markov's inequality and the direct part of the Borel-Cantelli lemma.

The proof of Theorem \ref{thm:counterexample3} is complete.
\end{proof}

\section{Supporting tools for Section \ref{sect:main}}

Lemma \ref{lemma:main1.Var} is used in the proof of Theorem \ref{thm:main1}.
\begin{lemma}\label{lemma:main1.Var}
Let $(a_k)$ be as defined in \eqref{a_k_1}, $s_n=2^{-1}\eee^{-n^2}$ for $n\in\mn$. Assume that $\eta$ has an arbitrary distribution with mean zero and variance $\sigma^2\in (0,\infty)$. Then
    \begin{equation}\label{main1.Var}
        v(s_n)={\rm Var}[X(s_n)] 
        ~\sim~\sigma^2\eee^{n^2-1},\quad n\to\infty
    \end{equation}
and 
\begin{equation}\label{eq:asconv}
\lim_{n\to\infty}\big(\eee^{-n^2/2+s_n\log\lfloor k_n\rfloor}X(s_n)-\eta_{\lfloor k_n\rfloor}\big)=0,\quad \text{{\rm a.s.}}
\end{equation}
\end{lemma}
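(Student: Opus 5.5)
We compute everything explicitly, since $X(s)$ involves only the indices $\lfloor k_j\rfloor$. With $a_k$ as in \eqref{a_k_1},
$$X(s)=\sum_{j\ge1}\eee^{j^2/2}\lfloor k_j\rfloor^{-s}\eta_{\lfloor k_j\rfloor},\qquad v(s)=\sigma^2\sum_{j\ge1}\eee^{j^2}\lfloor k_j\rfloor^{-2s}.$$
Put $s=s_n=2^{-1}\eee^{-n^2}$. By \eqref{eq:ineq}, $\log\lfloor k_j\rfloor=\eee^{j^2}+O(k_j^{-1})$, so
$$s_n\log\lfloor k_j\rfloor=\tfrac12\eee^{j^2-n^2}+o(1)$$
uniformly in $j$ with $j\le n$, and it is bounded below by $\tfrac12\eee^{j^2-n^2}-o(1)$ for all $j$. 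Hence the $j$th term of $v(s_n)$ is
$$\sigma^2\exp\big(j^2-\eee^{j^2-n^2}+o(1)\big).$$

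\emph{Variance asymptotics.} We split the sum into three parts.
\begin{itemize}
\item The term $j=n$ gives $\sigma^2\eee^{n^2-1}(1+o(1))$.
\item For $j\le n-1$, each factor $\exp(-\eee^{j^2-n^2})$ is at most $1$. The sum is therefore at most $\sigma^2\sum_{j\le n-1}\eee^{j^2}\le C\eee^{(n-1)^2}$, which is $o(\eee^{n^2-1})$ because $(n-1)^2-n^2=-2n+1\to-\infty$.
\item For $j\ge n+1$, $j^2-n^2\ge 2n+1$, so $\exp(j^2-\eee^{j^2-n^2})$ is doubly-exponentially small. Writing $j^2=n^2+m$ with $m\ge 2n+1$, the term is $\exp(n^2+m-\eee^{m})$. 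This sum is $o(1)$, in particular $o(\eee^{n^2})$.
\end{itemize}
Combining the three parts gives \eqref{main1.Var}.

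\emph{Almost sure convergence.} Multiply $X(s_n)$ by $\eee^{-n^2/2+s_n\log\lfloor k_n\rfloor}$. This makes the $j=n$ term exactly $\eta_{\lfloor k_n\rfloor}$, and since $s_n\log\lfloor k_n\rfloor\to1/2$ the factor is $\asymp \eee^{-n^2/2}$. Let $R_n$ denote the remainder. Then
$$\me[R_n^2]\le C\eee^{-n^2}\sum_{j\ne n}\eee^{j^2}\lfloor k_j\rfloor^{-2s_n}=O\big(\eee^{-2n+1}+\eee^{-n^2}\big).$$
This bound is summable in $n$. For any $\varepsilon>0$, Chebyshev's inequality gives $\sum_n\mmp\{|R_n|>\varepsilon\}<\infty$, and the Borel--Cantelli lemma then yields $R_n\to0$ a.s., which is \eqref{eq:asconv}.

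The only delicate point is uniformity of the approximation $\lfloor k_j\rfloor^{-2s_n}\approx\exp(-\eee^{j^2-n^2})$ over all $j$. For $j\le n$ this follows from \eqref{eq:ineq}, since $s_n\cdot O(1)\to0$. For $j>n$ we need only an upper bound on the term, and \eqref{eq:ineq} provides it via the factor $(1-\eee^{-\eee})^{-2s_n}\le 2$. No step is a serious obstacle; the main care is the tail sum over $j>n$.
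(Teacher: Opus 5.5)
Your proposal is correct and follows essentially the same route as the paper: a three-way split of $\sum_j \eee^{j^2}\lfloor k_j\rfloor^{-2s_n}$ around $j=n$, with \eqref{eq:ineq} controlling the floor, followed by a second-moment Chebyshev bound and the direct Borel--Cantelli lemma for the remainder. The one step you state without justification is the $o(1)$ bound for the tail $j\ge n+1$. The paper proves it via $\eee^x\ge 1+x+x^2/2$, which gives the bound $\sum_{k\ge1}\eee^{-2nk}$. Your substitution $m=j^2-n^2\ge 2n+1$ also works once you note that $n^2\le m^2/4$, so the terms are dominated by the summable sequence $\exp(m^2/4+m-\eee^m)$.
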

\begin{proof}
In view of $$\sum_{j\geq 1}\frac{\eee^{j^2}}{k_j^{2s_n}}\leq \sigma^{-2}v(s_n)=\sum_{j\geq 1}\frac{\eee^{j^2}}{\lfloor k_j\rfloor^{2s_n}}\leq (1-\eee^{-\eee})^{-2s_n}\sum_{j\geq 1}\frac{\eee^{j^2}}{k_j^{2s_n}},$$ where the right-hand side is secured by \eqref{eq:ineq}, it suffices to prove that
$$\sum_{j\geq 1}\frac{\eee^{j^2}}{k_j^{2s_n}}=\sum_{j\geq 1}\exp(j^2-\eee^{j^2-n^2})~\sim~ \eee^{n^2-1},\quad n\to\infty.$$ Since the right-hand side is equal to the $n$th term of the series, the task boils down to showing that
\begin{equation}\label{main1.Var1}
\lim_{n\to\infty}\sum_{j\ge1,\,j\neq n}\exp(j^2-n^2-\eee^{j^2-n^2})=0.
\end{equation}
By monotonicity, $$\sum_{j=1}^{n-1}\exp(j^2-n^2-\eee^{j^2-n^2})\leq \sum_{j=1}^{n-1}\eee^{j^2-n^2}\leq (n-1)\eee^{(n-1)^2-n^2}=(n-1)\eee^{-2n+1}~\to~0,~ n\to\infty.$$ Using $\eee^x\geq 1+x+x^2/2$ which holds for $x\in\mr$ we infer $$(n+k)^2-\eee^{(n+k)^2-n^2}\leq n^2-1-k^4/2-2nk^3-2n^2k^2\leq -2nk^3\leq -2nk$$ for $k,n\in\mn$ and thereupon
\begin{multline*}
      \sum_{j\ge n+1}\exp(j^2-\eee^{j^2-n^2})=\sum_{k\ge1}\exp((n+k)^2-\eee^{(n+k)^2-n^2})\le \sum_{k\ge 1}\eee^{-2nk}~\sim~\eee^{-2n}\to 0,~~n\to\infty.
  \end{multline*}
This proves \eqref{main1.Var1}, hence also \eqref{main1.Var}.

As for \eqref{eq:asconv}, note that the $n$th term of the series defining $X(s_n)$ is equal to $\eee^{n^2/2-s_n\log\lfloor k_n\rfloor}\eta_{\lfloor k_n\rfloor}$. Hence, it is enough to prove that
\begin{equation}\label{eq:asconv1}
\lim_{n\to\infty}\eee^{-n^2/2+s_n\log\lfloor k_n\rfloor} \sum_{j\ge1,\,j\neq n}\frac{\eee^{j^2/2}}{\lfloor k_j\rfloor^{s_n}}\eta_{\lfloor k_j\rfloor}=0\quad\text{a.s.}
\end{equation}
Using $\eee^{2s_n\log\lfloor k_n\rfloor}\leq \eee$ and $\lfloor k_j\rfloor^{-2s_n}
\leq (1-\eee^{-\eee})^{-2s_n}k_j^{-2s_n}$ which follows from \eqref{eq:ineq} we conclude that  
\begin{multline*}
\me\Big[\Big(\eee^{-n^2/2+s_n\log\lfloor k_n\rfloor} \sum_{j\ge1,\,j\neq n}\frac{\eee^{j^2/2}}{\lfloor k_j\rfloor^{s_n}}
\eta_{\lfloor k_j\rfloor}\Big)^2\Big]=\sigma^2
\eee^{-n^2+2s_n\log\lfloor k_n\rfloor}
\sum_{j\geq1,\,j\neq n}\frac{\eee^{j^2}}{\lfloor k_j\rfloor^{2s_n}}\\\leq C\sum_{j\geq1,\,j\neq n}\exp\big(j^2-n^2-\eee^{j^2-n^2}\big)
\end{multline*}
for some constant $C>0$ and sufficiently large $n$. According to Markov's inequality, for all $\varepsilon>0$,
\begin{multline*}
\mmp\Big\{\eee^{-n^2/2+s_n\log\lfloor k_n\rfloor}\Big|\sum_{j\ge1,\,j\neq n}\frac{\eee^{j^2/2}}{\lfloor k_j\rfloor^{s_n}}\eta_{\lfloor k_j\rfloor}\Big| >\varepsilon\Big\}\le \varepsilon^{-2}C \sum_{j\ge1,\,j\neq n}\exp(j^2-n^2-\eee^{j^2-n^2}). 
\end{multline*}
It follows from the previous part of the proof that the right-hand side is the $n$th term of a summable sequence. Now \eqref{eq:asconv1}, hence also \eqref{eq:asconv}, is secured by the direct part of the Borel-Cantelli lemma.
\end{proof}

All the subsequent results of this section are used in the proof of Theorem \ref{thm:main2}.
\begin{lemma}\label{lem:reduction}
Let $(a_k)$ be as defined in \eqref{a_k_2}, $s_n=2^{-1}\eee^{-n^2}$ for $n\in\mn$. Assume that $\eta$ has an arbitrary distribution with mean zero and variance $\sigma^2\in (0,\infty)$. Then
$\lim_{n\to\infty}(X(s_n)-X_1(s_n))=0$ a.s., where $X_1(s):=\sum_{j\geq 1}k_j^{-s}2^j \eta_{\lfloor k_j\rfloor}$ for $s>0$.

\end{lemma}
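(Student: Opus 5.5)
Since $a_k\neq 0$ only for $k=\lfloor k_j\rfloor$, and there $a_{\lfloor k_j\rfloor}=(4^j\lfloor k_j\rfloor)^{1/2}$, we have exactly
$$X(s)=\sum_{j\geq 1}\frac{2^j\lfloor k_j\rfloor^{1/2}}{\lfloor k_j\rfloor^{1/2+s}}\eta_{\lfloor k_j\rfloor}=\sum_{j\geq 1}2^j\lfloor k_j\rfloor^{-s}\eta_{\lfloor k_j\rfloor}.$$
Hence
$$X(s_n)-X_1(s_n)=\sum_{j\geq 1}2^j\big(\lfloor k_j\rfloor^{-s_n}-k_j^{-s_n}\big)\eta_{\lfloor k_j\rfloor}.$$
This is a series of independent centred terms. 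I plan to bound its second moment by something summable in $n$, and then conclude with Markov's inequality and the direct part of the Borel--Cantelli lemma, exactly as in the proof of Lemma \ref{lemma:main1.Var}.

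The key step is a deterministic bound on the coefficients. The inequalities $\lfloor k_j\rfloor\leq k_j$ and \eqref{eq:ineq} give
$$0\leq \lfloor k_j\rfloor^{-s}-k_j^{-s}=k_j^{-s}\Big(\big(k_j/\lfloor k_j\rfloor\big)^{s}-1\Big)\leq k_j^{-s}\big((1-k_j^{-1})^{-s}-1\big).$$
For $s\in(0,1]$ this is at most $C s\,k_j^{-1}$ by the mean value theorem, because $k_j\geq \eee^{\eee}$ keeps $1-k_j^{-1}$ bounded away from zero. Consequently
$$\me\big[(X(s_n)-X_1(s_n))^2\big]\leq \sigma^2 C^2 s_n^2\sum_{j\geq1}4^j k_j^{-2}.$$
The last sum is finite, since $k_j=\exp(\eee^{j^2})$ grows superexponentially, so the second moment is $O(s_n^2)=O(\eee^{-2n^2})$.

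It follows that $\mmp\{|X(s_n)-X_1(s_n)|>\varepsilon\}\leq \varepsilon^{-2}O(\eee^{-2n^2})$. This is summable in $n$ for every $\varepsilon>0$, and Borel--Cantelli yields the almost-sure convergence to $0$.

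There is no real obstacle; the only point needing care is the elementary estimate $(1-x)^{-s}-1\leq C s x$ for $x\in(0,\eee^{-\eee}]$ and $s\in(0,1/2]$. I would verify it via $(1-x)^{-s}=\exp(-s\log(1-x))\leq \exp(s c x)$ with a suitable constant $c$, together with $\eee^y-1\leq y\eee^y$.
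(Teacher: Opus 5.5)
Your proof is correct and follows essentially the paper's route: a mean-value-theorem bound $0\le \lfloor k_j\rfloor^{-s_n}-k_j^{-s_n}=O(s_n k_j^{-1})$, then a moment estimate that is summable in $n$, then Borel--Cantelli. The only difference is that the paper bounds the first absolute moment by the triangle inequality, which needs neither independence nor centring (just $\me|\eta|<\infty$ and $\sum_n s_n<\infty$), whereas you bound the second moment using orthogonality of the independent centred summands and then apply Chebyshev's inequality.
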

\begin{proof}
An application of the mean value theorem for differentiable functions to $x\mapsto x^{-s_n}$ yields $$0\leq\lfloor k_j\rfloor^{-s_n}-k_j^{-s_n}\leq s_n \lfloor k_j\rfloor^{-s_n-1}.$$ This ensures that $$\me [|X(s_n)-X_1(s_n)|]\leq \me [|\eta|]s_n\sum_{j\geq 1}\lfloor k_j\rfloor^{-s_n-1}2^j ~\sim~\me [|\eta|]\sum_{j\geq 1}\lfloor k_j\rfloor^{-1}2^j s_n,\quad n\to\infty.$$ As a consequence, $\sum_{n\geq 1}\me[|X(s_n)-X_1(s_n)|]<\infty$ which proves $\lim_{n\to\infty}(X(s_n)- X_1(s_n))=0$ a.s. 
\end{proof}
\begin{lemma}\label{lemma:main2.Var}
The following asymptotic relations hold:
\begin{equation}\label{main2.Var1}
    \sum_{j=1}^n 4^j\exp(-\eee^{j^2-n^2})~\sim~\Big(\frac{1}{\eee}+\frac{1}{3}\Big)4^n,~~n\to\infty
\end{equation}
and
\begin{equation}\label{main2.Var2}
\sum_{j\ge n+1} 4^j\exp(-\eee^{j^2-n^2})~\sim~ 4^{n+1}\exp(-\eee^{2n+1}),~~n\to\infty.
\end{equation}
\end{lemma}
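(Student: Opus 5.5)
Both relations are elementary estimates. In each sum we isolate the dominant terms, and all remaining indices contribute a negligible amount. Throughout we write $j=n-k$ in the first sum and $j=n+k$ in the second, and we normalize by $4^n$ and by $4^{n+1}\exp(-\eee^{2n+1})$ respectively.

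For \eqref{main2.Var1}, divide by $4^n$:
$$4^{-n}\sum_{j=1}^n 4^j\exp(-\eee^{j^2-n^2})=\sum_{k=0}^{n-1}4^{-k}\exp(-\eee^{(n-k)^2-n^2}).$$
The $k=0$ term equals $\exp(-\eee^0)=\eee^{-1}$. For fixed $k\geq 1$ we have $(n-k)^2-n^2=-2nk+k^2\to-\infty$, so $\exp(-\eee^{-2nk+k^2})\to \exp(0)=1$. Each summand is bounded by $4^{-k}$, which is summable. By dominated convergence, the sum therefore tends to
$$\eee^{-1}+\sum_{k\geq1}4^{-k}=\frac{1}{\eee}+\frac13.$$
This is exactly \eqref{main2.Var1}, and this part has no real obstacle.

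For \eqref{main2.Var2}, the $j=n+1$ term is exactly $4^{n+1}\exp(-\eee^{2n+1})$. It remains to show that
$$\sum_{k\geq2}4^{k-1}\exp\big(\eee^{2n+1}-\eee^{2nk+k^2}\big)\to0.$$
For $k\geq 2$, the exponent $2nk+k^2$ exceeds $2n+1$ by at least $2n(k-1)+k^2-1\geq 2n+3$. Hence
$$\eee^{2nk+k^2}-\eee^{2n+1}\geq \eee^{2nk+k^2}\big(1-\eee^{-2n-3}\big)\geq \tfrac12\eee^{2nk+k^2}\geq \tfrac12\eee^{4n+4}.$$
Applying the last inequality in a cruder form, $\tfrac12\eee^{2nk+k^2}\geq k\cdot \tfrac12 \eee^{4n}$ for $k\geq2$, where the factor $k$ comes from $\eee^{x}\geq x$-type growth in $k$. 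Each summand is then at most $4^{k}\exp(-k\eee^{4n}/2)$. The sum over $k$ of this bound is a geometric series whose ratio $4\exp(-\eee^{4n}/2)$ tends to $0$, so the whole tail tends to $0$ and we obtain \eqref{main2.Var2}.

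The main obstacle is this tail bound, which requires a clean lower bound on the gap between the double-exponential terms, uniform in $k$. An alternative is to use $\eee^x\geq 1+x+x^2/2$ exactly as in the proof of Lemma \ref{lemma:main1.Var}, which gives an exponent decaying at least linearly in $k$.
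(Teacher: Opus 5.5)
Your proof is correct and follows essentially the same route as the paper: you isolate the $j=n$ term (respectively the $j=n+1$ term), apply dominated convergence with majorant $4^{-k}$ to the first normalized sum, and show that the remaining tail of the second normalized sum vanishes. The only minor difference is in that tail. You give an explicit geometric bound $\sum_{k\geq 2}\big(4\exp(-\eee^{4n}/2)\big)^k\to 0$, whereas the paper applies dominated convergence with the majorant $4^k\exp(\eee^3-\eee^{k^2+4k+3})$, which follows from the monotonicity in $n$ of $\eee^{2n+1}-\eee^{(n+k+1)^2-n^2}$.
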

\begin{proof}
Since $\lim_{n\to\infty}\exp(-\eee^{j^2-n^2})=1$ uniformly in $j\in \{1,2,\ldots, n-1\}$ we conclude that
\begin{multline*}
\lim_{n\to\infty}\sum_{j=1}^n 4^{j-n}\exp(-\eee^{j^2-n^2})=\eee^{-1}+\lim_{n\to\infty}\sum_{j=1}^{n-1} 4^{j-n}\exp(-\eee^{j^2-n^2})=\eee^{-1}+\lim_{n\to\infty}\sum_{j=1}^{n-1} 4^{j-n}\\=\eee^{-1}+3^{-1}.
\end{multline*}
To prove \eqref{main2.Var2}, write
\begin{multline*}
\lim_{n\to\infty}\sum_{j\ge n+1} 4^{j-n-1}\exp(\eee^{2n+1}-\eee^{j^2-n^2})=1+\lim_{n\to\infty}\sum_{j\ge n+2} 4^{j-n-1}\exp(\eee^{2n+1}-\eee^{j^2-n^2})\\=1+\lim_{n\to\infty}\sum_{k\geq 1} 4^k\exp(\eee^{2n+1}-\eee^{(n+k+1)^2-n^2})=1. 
\end{multline*}
The last equality is justified  by the Lebesgue dominating convergence theorem upon noting that, for each $k\in\mn$, $\lim_{n\to\infty} (\eee^{2n+1}-\eee^{(n+k+1)^2-n^2})=-\infty$ and that, for $k,n\in\mn$, $\eee^{2n+1}-\eee^{(n+k+1)^2-n^2}\leq \eee^3-\eee^{k^2+4k+3}$. The latter inequality holds because, for each fixed $k\in\mn$, the sequence $n\mapsto \eee^{2n+1}-\eee^{(n+k+1)^2-n^2}$ decreases on $\mn$.
\end{proof}

\begin{lemma}\label{lemma:main2.C}
As $n\to\infty$,
\begin{equation}\label{main2.C}
Z_n=\sum_{j\ge n+1}2^j\exp\big(-\eee^{j^2-n^2}/2\big)\eta_j=o\big(\exp\big(-\eee^{2n}/2\big)\big)\quad\text{{\rm a.s.}}
\end{equation}
\end{lemma}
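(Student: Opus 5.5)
The plan is a second-moment estimate followed by the direct part of the Borel--Cantelli lemma, in the same way as in the proof of \eqref{eq:asconv} in Lemma \ref{lemma:main1.Var}. Here $\eta$ has finite variance $\sigma^2$; in Theorem \ref{thm:main2} it is standard normal, but only the second moment is needed.

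\textbf{Step 1 (variance).} By independence,
$$\me[Z_n^2]=\sigma^2\sum_{j\ge n+1}4^j\exp\big(-\eee^{j^2-n^2}\big).$$
Relation \eqref{main2.Var2} of Lemma \ref{lemma:main2.Var} gives
$$\me[Z_n^2]~\sim~\sigma^2 4^{n+1}\exp\big(-\eee^{2n+1}\big),\quad n\to\infty.$$

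\textbf{Step 2 (Markov and Borel--Cantelli).} Fix $\varepsilon>0$. By Markov's inequality,
$$\mmp\big\{|Z_n|>\varepsilon\exp(-\eee^{2n}/2)\big\}\le \varepsilon^{-2}\eee^{\eee^{2n}}\me[Z_n^2]\le C\varepsilon^{-2}4^{n+1}\exp\big(\eee^{2n}-\eee^{2n+1}\big)$$
for large $n$. The exponent is $-(\eee-1)\eee^{2n}$. This decays doubly exponentially, so it overwhelms the factor $4^{n+1}$, and the right-hand side is summable in $n$.

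The direct part of the Borel--Cantelli lemma then gives $\limsup_{n\to\infty}|Z_n|\exp(\eee^{2n}/2)\le\varepsilon$ a.s. Letting $\varepsilon\downarrow 0$ along a countable sequence yields \eqref{main2.C}.

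There is no serious obstacle. The only point needing care is the gap between the normalization $\exp(-\eee^{2n}/2)$ in the statement and the true size of $Z_n$, which is about $2^{n+1}\exp(-\eee^{2n+1}/2)$. The gap factor is $\exp(-(\eee-1)\eee^{2n}/2)$, which leaves ample room for summability. Step 1 is already supplied by Lemma \ref{lemma:main2.Var}.
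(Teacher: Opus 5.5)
Your proposal is correct and follows the paper's proof essentially verbatim. Like the paper, you take the variance asymptotics from Lemma~\ref{lemma:main2.Var} and apply Markov's inequality at level $\varepsilon\exp(-\eee^{2n}/2)$, which gives the summable bound $\varepsilon^{-2}\sigma^2 4^{n+1}\exp(\eee^{2n}-\eee^{2n+1})$, and then conclude with the direct part of the Borel--Cantelli lemma.
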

\begin{proof}
By Lemma \ref{lemma:main2.Var}, 
\begin{equation*}
\text{Var}[Z_n]=\sigma^2 \sum_{j\ge n+1} 4^j\exp(-\eee^{j^2-n^2})~\sim~\sigma^2 4^{n+1}\exp(-\eee^{2n+1}),\quad n\to\infty.
\end{equation*}
Using this we obtain with the help of Markov's inequality that, for all $\varepsilon>0$,
\begin{equation*}
\mmp\big\{|Z_n|>\varepsilon\exp\big(-\eee^{2n}/2\big)\big\}\le 
\varepsilon^{-2}\exp(\eee^{2n}) \text{Var}[Z_n]~\sim~ \varepsilon^{-2}\sigma^2 4^{n+1}\exp(\eee^{2n}-\eee^{2n+1}),\quad n\to\infty.  
\end{equation*}
Since the right-hand side is the $n$th term of a summable sequence, the direct part of the Borel-Cantelli lemma secures the claim.
\end{proof}

Let $\eta_0$, $\eta_{-1},\ldots$ be independent identically distributed random variables with the same distribution as $\eta$, which are independent of $\eta_1,\eta_2,\ldots$. Put
\begin{equation*}
    \hat{Z}_n=\eee^{-1/2}\eta_n+\frac{1}{2}\eta_{n-1}+\cdots +\frac{1}{2^{n-1}}\eta_1+\frac{1}{2^{n}}\eta_0+\frac{1}{2^{n+1}}\eta_{-1}+\cdots,\quad n\in\mathbb{Z}.
\end{equation*}
\begin{lemma}\label{lemma:main2.Ginf}
Assume that $\eta$ has a standard normal distribution. Then
\begin{equation}\label{main2Ginf}
\liminf_{n\to\infty}\frac{\log|\hat{Z}_n|}{\log n}=-1~~\text{{\rm a.s}}.
\end{equation}
\end{lemma}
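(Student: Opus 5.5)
The process $(\hat Z_n)_{n\in\mathbb{Z}}$ is a stationary Gaussian sequence. Write $\hat Z_n=\eee^{-1/2}\eta_n+W_{n}$ with $W_n:=\sum_{i\ge1}2^{-i}\eta_{n-i}$. Then $W_{n+1}=\frac12(\eta_n+W_n)$, so $\hat Z_n$ is $N(0,\tau^2)$ with $\tau^2=\eee^{-1}+1/3$. Moreover, conditionally on $\mathcal F_{n-1}:=\sigma(\eta_k:k\le n-1)$, $\hat Z_n$ is normal with mean $W_n$ and variance $\eee^{-1}$. I would prove the two inequalities $\liminf\ge -1$ and $\liminf\le -1$ separately.

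\emph{Lower bound.} Fix $\varepsilon>0$. Since $\hat Z_n$ is centered normal with a fixed variance, its density is bounded by some constant $c$. Hence $\mmp\{|\hat Z_n|\le n^{-1-\varepsilon}\}\le 2c\,n^{-1-\varepsilon}$, which is summable. By the direct part of the Borel--Cantelli lemma, $|\hat Z_n|>n^{-1-\varepsilon}$ eventually, so $\liminf \log|\hat Z_n|/\log n\ge -1-\varepsilon$ a.s. Letting $\varepsilon\downarrow0$ gives $\liminf\ge-1$.

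\emph{Upper bound.} Fix $\varepsilon\in(0,1)$ and put $A_n:=\{|\hat Z_n|\le n^{-1+\varepsilon}\}$. We need $A_n$ to occur infinitely often a.s. The events are dependent, so I would use the conditional (Lévy) form of the Borel--Cantelli lemma: if $\sum_n\mmp(A_n\mid\mathcal F_{n-1})=\infty$ a.s., then $A_n$ occurs infinitely often a.s. Note $A_n\in\mathcal F_n$. Given $\mathcal F_{n-1}$, $\hat Z_n\sim N(W_n,\eee^{-1})$. For $n$ large, on the event $\{|W_n|\le M\}$ the conditional density near $0$ is at least $c_M:=\eee^{1/2}(2\pi)^{-1/2}\eee^{-\eee(M+1)^2/2}>0$. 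This gives
\[
\mmp(A_n\mid\mathcal F_{n-1})\ge 2c_M n^{-1+\varepsilon}\1_{\{|W_n|\le M\}}.
\]
It remains to show that $|W_n|\le M$ for infinitely many $n$, with enough frequency for the sum to diverge. The process $(W_n)$ is a stationary ergodic AR(1) sequence, since it is a mixing Gaussian sequence with exponentially decaying covariance. By the ergodic theorem, the fraction of $n\le N$ with $|W_n|\le M$ converges to $p_M:=\mmp\{|W_0|\le M\}>0$. Summation by parts then shows that $\sum_n n^{-1+\varepsilon}\1_{\{|W_n|\le M\}}=\infty$ a.s. In fact even $\sum_n n^{-1}\1_{\{|W_n|\le M\}}=\infty$ holds, since it behaves like $p_M\log N$. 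Hence $\liminf\log|\hat Z_n|/\log n\le -1+\varepsilon$, and letting $\varepsilon\downarrow0$ gives $\le -1$.

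The main obstacle is the upper bound, because of the dependence between the $\hat Z_n$. The key step is the martingale conditional Borel--Cantelli argument combined with ergodicity of $W_n$. The only fine point is to apply the conditional density bound carefully, using the representation $\hat Z_n=\eee^{-1/2}\eta_n+W_n$ with $\eta_n$ independent of $\mathcal F_{n-1}$.
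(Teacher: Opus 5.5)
Your proof is correct. The lower bound is the same Borel--Cantelli argument as in the paper, but your upper bound takes a genuinely different route.

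The paper handles the dependence between the $A_n$ with the Kochen--Stone lemma. It writes down the bivariate normal density of $(\hat Z_i,\hat Z_j)$ and uses the explicit correlations $\rho_k=c2^{-k}<3/4$ to bound the density ratio $f_{j-i}/f_0\le 1+c_3\rho_{j-i}$ on $[-1,1]^2$. From this it gets $\mmp(A_i\cap A_j)\le(1+c_3\rho_{j-i})\mmp(A_i)\mmp(A_j)$. It then shows that the correction $\sum_{i<j}\rho_{j-i}\mmp(A_i)\mmp(A_j)$ stays bounded, and this is where it needs $\varepsilon<1/2$.

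You instead exploit the autoregressive structure $\hat Z_n=\eee^{-1/2}\eta_n+W_n$. Given $\mathcal F_{n-1}$, the law of $\hat Z_n$ is $N(W_n,\eee^{-1})$. Lévy's conditional Borel--Cantelli lemma then reduces the problem to the almost sure divergence of $\sum_n n^{-1+\varepsilon}\1_{\{|W_n|\le M\}}$. That divergence follows from Birkhoff's theorem once $(W_n)$ is known to be ergodic. Your justification of ergodicity is valid: a stationary Gaussian sequence whose covariance tends to zero is mixing, and $(W_n)$ is in any case a factor of the i.i.d.\ shift.

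What your route buys:
\begin{itemize}
\item It avoids all the two-dimensional density and correlation bookkeeping.
\item It works for every $\varepsilon\in(0,1)$.
\item It does not really use Gaussianity of the joint law. It only needs the innovation $\eee^{-1/2}\eta_n$ to have a bounded density, for the lower bound, and a density bounded below on compacts, for the upper bound.
\end{itemize}

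The price is that it relies on martingale and ergodic-theoretic tools (Lévy's extension, Birkhoff's theorem, ergodicity of $(W_n)$). The paper's argument is instead a self-contained second-moment computation that uses only the explicit Gaussian structure and the geometric decay of the correlations.
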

\begin{proof}
The sequence $(\hat{Z}_n)_{n\in\mathbb{Z}}$ is stationary. In particular. $\hat{Z}_n\overset{{\rm d}}{=}\hat{Z}_0$ for each $n\in\mathbb{Z}$. Plainly, $\hat{Z}_0$ has a normal distribution with mean zero and variance 
\begin{equation*}
a^2=(\eee^{-1/2})^2\text{Var}[\eta_0]+4^{-1}\text{Var}[\eta_{-1}]+4^{-2}\text{Var}[\eta_{-2}]+\ldots=\eee^{-1}+3^{-1}. 
\end{equation*}
Further, we claim that $$\rho_k:=\frac{\text{Cov}(\hat{Z}_k,\hat{Z}_0)}{a^2}=\frac{\eee^{-1/2}+3^{-1}}{\eee^{-1}+3^{-1}}2^{-k}=:c2^{-k},\quad k\in\mn.$$ Indeed, if $k\in\mn$,
\begin{multline*}
\text{Cov}(\hat{Z}_k,\hat{Z}_0)=\me[\hat{Z}_k\hat{Z}_0]=\me[\hat{Z}_k\hat{Z}_0]\\=\me\big[(\eee^{-1/2}\eta_k+2^{-1}\eta_{k-1}+\ldots+2^{-k}\eta_0+2^{-k-1}\eta_{-1}+\ldots)(\eee^{-1/2}\eta_0+2^{-1}\eta_{-1}
+\ldots)\big]\\=\eee^{-1/2}2^{-k}+2^{-1} 2^{-k-1}+2^{-2}2^{-k-2}+\ldots=\big(\eee^{-1/2}+4^{-1}+4^{-2}+\ldots\big)2^{-k}=\big(\eee^{-1/2}+3^{-1}\big)2^{-k}.
\end{multline*}

By L'Hôpital's rule,
\begin{equation*}
\mmp\{|\hat{Z}_0|\le x\}=\frac{1}{(2\pi)^{1/2}a}\int_{-x}^x\eee^{-\frac{y^2}{2a^2}}{\rm d}y~\sim~ \frac{2^{1/2}}{\pi^{1/2}a}x=:c_a x,\quad x\to 0+.
    \end{equation*}
Hence, for all $\varepsilon>0$, 
\begin{equation*}
\mmp\{\log|\hat{Z}_n|\le-(1+\varepsilon)\log n\}=
\mmp\{|\hat{Z}_0|\le n^{-(1+\varepsilon)}\}~\sim~c_a n^{-(1+\varepsilon)},\quad n\to\infty.
\end{equation*}
Therefore, $\log|\hat{Z}_n|\ge-(1+\varepsilon)\log n$ for large $n$ a.s.\ according to the direct part of the Borel-Cantelli lemma, that is,
\begin{equation}\label{main2Ginf1}
\liminf_{n\to\infty}\frac{\log|\hat{Z}_n|}{\log n}\ge-1\quad \text{{\rm a.s.}}
\end{equation}
Put $A_n:=\{\log|\hat{Z}_n|\le-(1-\varepsilon)\log n\}$ for $\varepsilon\in(0,1/2)$. We want to show that $\mmp\{A_n~\text{i.o.}\}=1$. The events $A_1,A_2,\cdots$ are dependent, so the converse part of the Borel-Cantelli lemma is not applicable. However, by the Kochen-Stone lemma (see, for instance, p.73 in \cite{Durrett}), if $\sum_{n\ge1}\mmp(A_n)=\infty$, then
\begin{equation*}
\mmp\{A_n~\text{i.o.}\}\ge\limsup_{n\to\infty}\frac{(\sum_{k=1}^n \mmp(A_k))^2}{\sum_{i=1}^n\sum_{j=1}^n\mmp(A_i\cap A_j)}
\end{equation*}
Using $$\mmp(A_n)=\mmp\{|\hat{Z}_n|\le n^{\varepsilon-1}\}~\sim~c_a n^{\varepsilon-1},\quad n\to\infty$$ we infer $\sum_{n\ge 1}\mmp(A_n)=\infty$.
   
    For $i<j$,
    \begin{equation}\label{main2H}
        \mmp(A_i\cap A_j)=\mmp\{|\hat{Z}_i|\le i^{\varepsilon-1},~|\hat{Z}_j|\le j^{\varepsilon-1}\}=\int_{|x|\le i^{\varepsilon-1}}\int_{|y|\le j^{\varepsilon-1}}f_{j-i}(x,y){\rm d}y{\rm d}x,
    \end{equation}
    where $f_{j-i}$ is the density of $(\hat{Z}_i,\hat{Z}_j)$. It is known that the density of a vector $(X_1,X_2)$ with a two-dimensional normal distribution is given by
    \begin{equation*}
        f(x,y)=\frac{1}{2\pi\sigma_1\sigma_2(1-\rho^2)^{1/2}}\cdot\exp\Big(-\frac{1}{2(1-\rho^2)}\Big(\frac{(x-\mu_1)^2}{\sigma_1^2}+\frac{(y-\mu_2)^2}{\sigma_2^2}-\frac{2\rho(x-\mu_1)(y-\mu_2)}{\sigma_1\sigma_2}\Big)\Big),
    \end{equation*}
where $\mu_i=\me [X_i]$, $\sigma^2_i={\rm Var}[X_i]$ for $i=1,2$ and $\rho$ is the correlation coefficient defined by $\rho=(\sigma_1\sigma_2)^{-1}(\me [X_1X_2]-\mu_1\mu_2)$.
   Substituting $\mu_1=\mu_2=0$ and $\sigma_1=\sigma_2=a$ we obtain
   \begin{equation*}
       f_{j-i}(x,y)=
       \frac{1}{2\pi a^2(1-\rho^2)^{1/2}}\exp\Big(-\frac{x^2-2\rho xy+y^2}{2a^2(1-\rho^2)}\Big),\quad x,y\in\mr,
   \end{equation*}
where $\rho=\rho_{j-i}$. Put $\rho_0:=0$ and $$f_0(x,y):= \frac{1}{2\pi a^2}\exp\Big(-\frac{x^2+y^2}{2a^2}\Big),\quad x,y\in\mr.$$ In view of $2\rho xy-\rho^2(x^2+y^2)\le(\rho-\rho^2)(x^2+y^2)\le 2\rho(1-\rho)$ for $x,y\in[-1,1]$,
\begin{multline*}
\frac{f_{j-i}(x,y)}{f_0(x,y)}=\frac{1}{(1-\rho^2)^{1/2}}\exp\Big(\frac{2\rho xy-\rho^2(x^2+y^2)}{2a^2(1-\rho^2)}\Big)\le\frac{1}{(1-\rho^2)^{1/2}}\exp\Big(\frac{\rho}{a^2(1+\rho)}\Big)\\ \le\frac{1}{(1-\rho^2)^{1/2}}\exp\Big(\frac{\rho}{a^2}\Big),\quad x,y\in [-1,1].
   \end{multline*}
As a consequence of
\begin{equation*}
\rho_k=\frac{\eee^{-1/2}+3^{-1}}{\eee^{-1}+3^{-1}}2^{-k}<\frac{3}{2}\cdot\frac{1}{2}=\frac{3}{4},~~k\in\mn,
\end{equation*}
there exist positive constants $c_1, c_2$ and $c_3$ such that
   \begin{equation*}
       \exp\Big(\frac{\rho}{a^2}\Big)\le 1+c_1\rho;~~~\frac{1}{(1-\rho^2)^{1/2}}\le 1+c_2\rho^2\le1+c_2\rho,
   \end{equation*}
and thereupon
\begin{equation*}
       \frac{1}{(1-\rho^2)^{1/2}}\exp\Big(\frac{\rho}{a^2}\Big)\le 1+c_3\rho.
   \end{equation*}
According to \eqref{main2H}, for $i<j$,
\begin{equation*}
\mmp(A_i\cap A_j)\le (1+c_3\rho_{j-i})\int_{|x|\le i^{\varepsilon-1}}\int_{|y|\le j^{\varepsilon-1}}f_0(x,y){\rm d}y{\rm d}x=(1+c_3\rho_{j-i})\mmp(A_i)\mmp(A_j).
\end{equation*}
With this at hand, we proceed as follows:
\begin{multline}\label{main2G1}
\sum_{i=1}^n\sum_{j=1}^n\mmp(A_i\cap A_j)=\sum_{i=1}^n\mmp(A_i)+2\sum_{1\le i<j\le n}\mmp(A_i\cap A_j)\\=\sum_{i=1}^n(\mmp(A_i))^2+2\sum_{1\le i<j\le n}\mmp(A_i\cap A_j)+O\Big(\sum_{i=1}^n\mmp(A_i)\Big)\\ \le \sum_{i=1}^n(\mmp(A_i))^2+2\sum_{1\le i<j\le n}\mmp(A_i)\mmp (A_j)+2c_3\sum_{1\le i<j\le n}\rho_{j-i}\mmp(A_i)\mmp (A_j)+O\Big(\sum_{i=1}^n\mmp(A_i)\Big)\\=\Big(\sum_{i=1}^n\mmp(A_i)\Big)^2+2c_3\sum_{1\le i<j\le n}\rho_{j-i}\mmp(A_i)\mmp (A_j)+O\Big(\sum_{i=1}^n\mmp(A_i)\Big)
   \end{multline}
We intend to show that
   \begin{equation}\label{main2G2}
       \sum_{1\le i<j\le n}\rho_{j-i}\mmp(A_i)\mmp (A_j)=O(1),\quad n\to\infty.
   \end{equation}
To this end, note that, for $i\in\mn$,
\begin{equation*}
\mmp(A_i)=\mmp\{|\hat{Z}_i|\le i^{\varepsilon-1}\}=\frac{1}{(2\pi)^{1/2}a}\int_{-i^{\varepsilon-1}}^{i^{\varepsilon-1}}\eee^{-\frac{x^2}{2a^2}}{\rm d}x\le c_a i^{\varepsilon-1}. 
   \end{equation*}

Therefore, recalling that $\varepsilon\in (0,1/2)$ to ensure the convergence of the resulting series we infer
\begin{multline*}
       \sum_{1\le i<j\le n}\rho_{j-i}\mmp(A_i)\mmp (A_j)\le cc_a^2 \sum_{1\le i<j\le n}2^{i-j} i^{\varepsilon-1}j^{\varepsilon-1}=cc_a^2\sum_{i=1}^n\sum_{j=i+1}^n 2^{i-j}i^{\varepsilon-1}j^{\varepsilon-1}\\ \le cc_a^2\sum_{i=1}^n i^{2\varepsilon-2}\sum_{j=i+1}^n 2^{i-j}\leq cc_a^2\sum_{i\geq 1}i^{2\varepsilon-2}<\infty,
   \end{multline*}
and \eqref{main2G2} follows.

Finally,
   \begin{equation*}
       \mmp\{A_n~\text{i.o.}\}\ge\limsup_{n\to\infty}\frac{(\sum_{k=1}^n \mmp(A_k))^2}{\sum_{i=1}^n\sum_{j=1}^n\mmp(A_i\cap A_j)}\ge 1,
   \end{equation*}
   which entails
   \begin{equation}\label{main2Ginf2}
         \liminf_{n\to\infty}\frac{\log|\hat{Z}_n|}{\log n}\le-1~~\text{{\rm a.s.}}
    \end{equation}
A combination of \eqref{main2Ginf1} and \eqref{main2Ginf2} yields \eqref{main2Ginf}.
\end{proof}

\begin{lemma}\label{lemma:main2.Gsup}
Assume that $\eta$ has a standard normal distribution. Then
    \begin{equation}\label{main2Gsup}
        \limsup_{n\to\infty}\frac{\log|\hat{Z}_n|}{\log\log n}=\frac{1}{2}\quad \text{{\rm a.s}}.
    \end{equation}
\end{lemma}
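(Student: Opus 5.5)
Since $\hat Z_n$ is a stationary Gaussian sequence with $\hat Z_n\sim N(0,a^2)$ and correlations $\rho_k=c2^{-k}$, the statement amounts to $\limsup_{n\to\infty}|\hat Z_n|/(2a^2\log n)^{1/2}=1$ a.s. Indeed, taking logarithms gives $\log|\hat Z_n|=\tfrac12\log\log n+O(1)$ along a subsequence, while $\log|\hat Z_n|\le \tfrac12\log\log n+O(1)$ for all large $n$. So the plan is to prove an upper bound for all large $n$ and a matching lower bound along infinitely many $n$, both at the level of $(\log n)^{1/2}$ up to constants.

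\emph{Upper bound.} Fix $\varepsilon>0$. The Gaussian tail bound gives
\[
\mmp\{|\hat Z_n|>(2(1+\varepsilon)a^2\log n)^{1/2}\}\le 2\exp(-(1+\varepsilon)\log n)=2n^{-1-\varepsilon},
\]
which is summable. By the direct part of the Borel--Cantelli lemma, $|\hat Z_n|\le C(\log n)^{1/2}$ for large $n$ a.s. Hence $\log|\hat Z_n|\le \tfrac12\log\log n+O(1)$, and $\limsup_{n\to\infty}\log|\hat Z_n|/\log\log n\le 1/2$ a.s.

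\emph{Lower bound.} It suffices to show that $|\hat Z_n|\ge (\log n)^{1/2}/\text{const}$ infinitely often; in fact any bound $|\hat Z_n|\ge(\log n)^{1/2-\delta}$ i.o.\ for every $\delta>0$ is enough. The cleanest route avoids Kochen--Stone by exploiting the explicit moving-average structure. Write $\hat Z_n=\eee^{-1/2}\eta_n+\tfrac12 R_n$, where $R_n=\sum_{k\ge1}2^{-(k-1)}\eta_{n-k}$ is $N(0,4/3)$ and independent of $\eta_n$. Consider the independent events
\[
B_n=\{\eta_n>(2(1-\varepsilon)\log n)^{1/2}\}.
\]
Since $\mmp(B_n)\ge n^{-(1-\varepsilon)}$ for large $n$, the sum $\sum_n\mmp(B_n)$ diverges, and the converse Borel--Cantelli lemma gives that $B_n$ occurs i.o.\ a.s. Separately, $|R_n|\le C(\log n)^{1/2}$ for large $n$ by the same Gaussian bound as before, but that bound is too weak to beat $\eee^{-1/2}\eta_n$ directly. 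The better approach is to intersect with $\{R_n\ge 0\}$. The events $B_n\cap\{R_n\ge0\}$ are not independent, but $\{R_n\ge0\}$ is independent of $\eta_n$. So I would work with a sparse subsequence $n_m=m^2$, say, and apply Kochen--Stone, or use a second-moment argument with the $2^{-k}$ decay of dependence; the alternative below sidesteps this entirely.

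A simpler alternative: restrict to even $n$ and define $D_n=\{\eta_n>(2(1-\varepsilon)\log n)^{1/2},\ \eta_{n-1}\ge0,\ \tfrac12R_{n}-\tfrac12\eta_{n-1}\ge -1\}$. Even this involves dependence. The most economical plan, then, is to mimic the Kochen--Stone computation in Lemma~\ref{lemma:main2.Ginf}, applied to $A_n=\{\hat Z_n>(2(1-\varepsilon)a^2\log n)^{1/2}\}$. The main obstacle is the bound $\mmp(A_i\cap A_j)\le(1+c\rho_{j-i}\cdot\text{stuff})\mmp(A_i)\mmp(A_j)$ in the tail regime, where the density ratio is no longer bounded on $[-1,1]^2$. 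To handle this, I would split the pairs. For $j-i\ge K\log j$ (with $K$ large), $\rho_{j-i}\log j=o(1)$ uniformly, so the density ratio on the relevant region is $1+o(1)$ when $xy\rho/(a^2(1-\rho^2))\le C\rho\log j$. For near pairs, I would use the crude bound $\mmp(A_i\cap A_j)\le\mmp(A_j\mid\cdot)$ via the bivariate Gaussian estimate $\mmp(A_i\cap A_j)\le n^{-(1-\varepsilon)2/(1+\rho)}$ with $\rho\le3/4$; the contribution of these $O(n\log n)$ pairs is then $o((\sum\mmp(A_k))^2)$ for $\varepsilon$ small. Kochen--Stone then gives $\mmp\{A_n\ \text{i.o.}\}=1$, hence $\limsup\ge1/2$, completing the proof.
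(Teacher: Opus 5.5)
Your upper bound is the paper's Borel--Cantelli argument, with a sharper threshold, which is fine. For the lower bound you take a different route. The paper quotes Pickands' theorem, $\max_{k\le n}|\hat Z_k|/(2\log n)^{1/2}\to a$ a.s., and then looks at the indices $\tau_{n_j}$ where these maxima are attained. Your final plan is instead a Kochen--Stone second-moment argument for $A_n=\{\hat Z_n>t_n\}$, $t_n=(2(1-\varepsilon)a^2\log n)^{1/2}$, in the spirit of Lemma~\ref{lemma:main2.Ginf}. The paper's proof is short but rests on a deep external result; yours is self-contained and can be made to work, but two steps need repair.

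\emph{Near pairs.} The bound must be in the smaller index. On $A_i\cap A_j$ with $i<j$ one has $\hat Z_i+\hat Z_j>2t_i$, whence $\mmp(A_i\cap A_j)\le i^{-2(1-\varepsilon)/(1+\rho_{j-i})}\le i^{-8(1-\varepsilon)/7}$. A bound with $n$ in place of $i$ is false for small $i$. Summed over $j-i<K\log j$, this gives $O(\log n)=o(n^{2\varepsilon}/\log n)$ for $\varepsilon<1/8$, as needed.

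\emph{Far pairs.} The density ratio $f_{j-i}/f_0$ is $1+o(1)$ only on the part of the unbounded region $\{x>t_i,\,y>t_j\}$ where $xy\le C\log j$. The remainder must be handled separately. For example, it forces $\max(x,y)>(C\log j)^{1/2}$, an event of probability at most $2j^{-C/(2a^2)}$, and this is summable over all pairs $i<j$ once $C>4a^2$. Alternatively, invoke Berman's normal comparison inequality.

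More to the point, the first route you discarded already gives a complete and much simpler proof. Since $\frac12R_n\sim N(0,1/3)$, the same direct Borel--Cantelli argument as in your upper bound gives $|\frac12R_n|\le(\frac23(1+\varepsilon)\log n)^{1/2}$ for all large $n$ a.s. On $B_n$, by contrast, $\eee^{-1/2}\eta_n>(\frac{2}{\eee}(1-\varepsilon)\log n)^{1/2}$. Because $2/\eee>2/3$, for small $\varepsilon$ this yields $\hat Z_n\ge c_\varepsilon(\log n)^{1/2}$ on $B_n$ for all large $n$, where $c_\varepsilon=(2(1-\varepsilon)/\eee)^{1/2}-(2(1+\varepsilon)/3)^{1/2}>0$. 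The $B_n$ are independent and $\sum_n\mmp(B_n)=\infty$. (Your claim $\mmp(B_n)\ge n^{-(1-\varepsilon)}$ is off: actually $\mmp(B_n)\asymp n^{-(1-\varepsilon)}(\log n)^{-1/2}$, but the series still diverges.) Hence $B_n$ occurs i.o.\ a.s., and $\limsup_{n}\log|\hat Z_n|/\log\log n\ge 1/2$. Your remark that this bound is ``too weak to beat $\eee^{-1/2}\eta_n$ directly'' is therefore mistaken. This route exploits independence of $\eta_n$ from $R_n$ and needs neither Pickands' theorem nor Kochen--Stone.
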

\begin{proof}
The limit relation
\begin{equation}\label{main2Gsup1}
\limsup_{n\to\infty}\frac{\log|\hat{Z}_n|}{\log\log n}\le\frac{1}{2}~~\text{a.s.}
\end{equation}
follows by a standard application of the direct part of the Borel-Cantelli lemma which is based on
\begin{multline*}
\mmp\{\log |\hat Z_n|>(1/2+\varepsilon)\log\log n\}=\mmp\{|\hat{Z}_n|>(\log n)^{1/2+\varepsilon}\}\\~\sim~
{\rm const} \frac{1}{(\log n)^{1/2+\varepsilon}}\exp\Big(-\frac{(\log n)^{1+2\varepsilon}}{2a^2}\Big),\quad n\to\infty
\end{multline*}
which holds for all $\varepsilon>0$ and the fact that the right-hand side is the $n$th term of a summable sequence.

Since both $(\hat{Z}_k)_{k\in\mathbb{Z}}$ and $(-\hat{Z}_k)_{k\in\mathbb{Z}}$ are stationary Gaussian sequences with $$\rho_k=a^{-2}{\rm Cov}(\hat Z_k, \hat Z_0)=c2^{-k}\to 0, \quad k\to\infty,$$ an application of Theorem 3.4 in \cite{Pickands} yields
    \begin{equation*}
        \lim_{n\to\infty}\frac{\max_{1\le k \le n}\hat{Z}_k}{(2\log n)^{1/2}}=\lim_{n\to\infty}\frac{\max_{1\le k \le n}(-\hat{Z}_k)}{(2\log n)^{1/2}}=a\quad\text{a.s.}
    \end{equation*}

In view of $\max_{1\le k \le n}|\hat{Z}_k|=\max\big(\max_{1\le k \le n}\hat{Z}_k,\max_{1\le k \le n}(-\hat{Z}_k)\Big)$, we conclude that
    \begin{equation*}
        \lim_{n\to\infty}\frac{\max_{1\le k \le n}|\hat{Z}_k|}{(2\log n)^{1/2}}=
a\quad\text{a.s.}
    \end{equation*}
and thereupon
    \begin{equation*} 
        \lim_{n\to\infty}\frac{ \max_{1\leq k\leq n}\log|\hat{Z}_k|}{\log\log n}= \frac{1}{2}\quad\text{a.s}.
    \end{equation*}
The latter ensures that given $\varepsilon>0$ there exists a sequence $(n_j)_{j\ge1}$ which satisfies
    \begin{equation*}
        \frac{\max_{1\leq k\leq n_j}\log|\hat{Z}_k|}{\log\log n_j}\ge\frac{1}{2}-\varepsilon\quad \text{for all}\quad j\in\mn\quad \text{a.s.}
    \end{equation*}
    Put $\tau_{n_j}=\min\{n\in\mn:\log|\hat{Z}_n|=\max_{1\leq k\leq n_j}\log|\hat{Z}_k|\}$. Then noting that $\tau_{n_j}\leq n_j$ we obtain
    \begin{equation*}
        \frac{\log|\hat{Z}_{\tau_{n_j}}|}{\log\log \tau_{n_j}}=\frac{\max_{1\leq k\leq n_j}\log|\hat{Z}_k|}{\log\log \tau_{n_j}}\ge \frac{\max_{1\leq k\leq n_j}\log|\hat{Z}_k|}{\log\log n_j}\ge\frac{1}{2}-\varepsilon\quad \text{for all}\quad j\in\mn\quad \text{a.s.}
    \end{equation*}
    
    Since $\lim_{j\to\infty}\tau_{n_j}=\infty$ a.s., we infer
    \begin{equation}\label{main2Gsup2}
        \limsup_{n\to\infty}\frac{\log|\hat{Z}_n|}{\log\log n}\ge\frac{1}{2}\quad \text{a.s.}
    \end{equation}
Now \eqref{main2Gsup} follows from \eqref{main2Gsup1} and \eqref{main2Gsup2}.
\end{proof}

We proceed with two corollaries to Lemma \ref{lemma:main2.Gsup}. Put $\hat Y_n:=\eee^{-1/2}\eta_n+\sum_{j=1}^{n-1}2^{j-n}\eta_j$ for $n\in\mn$.
\begin{cor}\label{cor:limit}
Assume that $\eta$ has a standard normal distribution. Then
\begin{equation}\label{main2.Isup}
        \limsup_{n\to\infty}\frac{\log|\hat{Y}_n|}{\log\log n}=\frac{1}{2}\quad \text{{\rm a.s.}}
    \end{equation}
    and
    \begin{equation}\label{main2.Iinf}
        \liminf_{n\to\infty}\frac{\log|\hat{Y}_n|}{\log n}=-1\quad \text{{\rm a.s.}}
    \end{equation}
\end{cor}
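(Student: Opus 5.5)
The idea is to compare $\hat Y_n$ with the stationary sequence $\hat Z_n$. By definition,
$$\hat Z_n-\hat Y_n=2^{-n}\sum_{i\geq 0}2^{-i}\eta_{-i}=:2^{-n}W,$$
where $W$ is an a.s. finite random variable, Gaussian with variance $4/3$. Hence $|\hat Z_n-\hat Y_n|\le 2^{-n}|W|$ for all $n$ a.s. This is an exponentially small perturbation. We transfer Lemmas \ref{lemma:main2.Ginf} and \ref{lemma:main2.Gsup} to $\hat Y_n$ by showing that $\log|\hat Y_n|-\log|\hat Z_n|\to 0$ a.s.

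First, by Lemma \ref{lemma:main2.Ginf}, for any fixed $\varepsilon>0$ we have $|\hat Z_n|\ge n^{-1-\varepsilon}$ for all large $n$ a.s. Since $2^{-n}|W|=o(n^{-1-\varepsilon})$ a.s., it follows that
$$\Big|\frac{\hat Y_n}{\hat Z_n}-1\Big|\le \frac{2^{-n}|W|}{|\hat Z_n|}\le 2^{-n}n^{1+\varepsilon}|W|\to 0\quad\text{a.s.}$$
Consequently $\log|\hat Y_n|-\log|\hat Z_n|\to 0$ a.s.

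Second, we divide by $\log\log n$ and by $\log n$ respectively. Both denominators tend to infinity, so a vanishing additive difference does not affect the limsup in \eqref{main2Gsup} or the liminf in \eqref{main2Ginf}. This yields \eqref{main2.Isup} and \eqref{main2.Iinf}.

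The only delicate point is the liminf statement. There $|\hat Z_n|$ itself can be polynomially small, so one must check that the perturbation $2^{-n}|W|$ is negligible relative to $|\hat Z_n|$ rather than merely small. This is exactly what the lower bound \eqref{main2Ginf1} from Lemma \ref{lemma:main2.Ginf} supplies: polynomial smallness of $|\hat Z_n|$ is no match for the exponential decay $2^{-n}$.
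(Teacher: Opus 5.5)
Your proposal is correct and follows essentially the same route as the paper: you write $\hat Y_n=\hat Z_n-2^{-n}W$ and use the eventual lower bound $|\hat Z_n|\ge n^{-(1+\varepsilon)}$ from Lemma~\ref{lemma:main2.Ginf}. The only difference is cosmetic. You show $\hat Y_n/\hat Z_n\to 1$ a.s. once and read off both relations, whereas the paper proves each upper and lower bound separately via the triangle inequality.
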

\begin{proof}
The variable $\hat Y_n$ admits a representation:
\begin{multline}\label{main2F}
\hat{Y}_n= 
\eee^{-1/2}\eta_n+2^{-1}\eta_{n-1}+\ldots +2^{-(n-1)}\eta_1+2^{-n}\eta_0+2^{-(n+1)}\eta_{-1}+\ldots\\ -2^{-n}\big(\eta_0+2^{-1}\eta_{-1}+2^{-2}\eta_{-2}+\ldots\Big)=:\hat{Z}_n-2^{-n}W,\quad n\in\mathbb{Z},
    \end{multline}
where the series defining $W$ trivially converges a.s. According to \eqref{main2Ginf}, for $\varepsilon\in(0,1)$ and large $n$, $|\hat{Z}_n|\ge n^{-(1+\varepsilon)}$ a.s.
   
This implies that, for large $n$,
    \begin{equation*}
        |\hat{Y}_n|\ge|\hat{Z}_n|-|\hat{Z}_n-\hat{Y}_n|\ge n^{-(1+\varepsilon)}-2^{-n}|W|\quad \text{a.s.}
    \end{equation*}
and thereupon
    \begin{equation}\label{main2.Iinf1}
        \liminf_{n\to\infty}\frac{\log|\hat{Y}_n|}{\log n}\ge-1~~\text{a.s.}
    \end{equation}
    On the other hand, according to \eqref{main2Ginf}, for $\varepsilon\in(0,1)$, $|\hat{Z}_n|\le n^{-(1-\varepsilon)}$ i.o.\ a.s., whence
    
\begin{equation*}
|\hat{Y}_n|\le|\hat{Z}_n|+|\hat{Z}_n-\hat{Y}_n|\le n^{-(1-\varepsilon)}+2^{-n}|W|\quad \text{i.o}\quad \text{a.s.}
    \end{equation*}
This entails 
    \begin{equation}\label{main2.Iinf2}
        \liminf_{n\to\infty}\frac{\log|\hat{Y}_n|}{\log n}\le-1\quad \text{a.s.}
    \end{equation}
The limit relation \eqref{main2.Iinf} follows from \eqref{main2.Iinf1} and \eqref{main2.Iinf2}.

The proof of \eqref{main2.Isup} is based on \eqref{main2Gsup} and proceeds similarly to the proof of \eqref{main2.Iinf}. In view of this, we only provide details for the lower bound. According to \eqref{main2Gsup}, for $\varepsilon\in (0,1/2)$, $|\hat{Z}_n| \ge (\log n)^{1/2-\varepsilon}$ i.o. \ a.s., which entails
\begin{equation*}
|\hat{Y}_n| \ge |\hat{Z}_n| - |\hat{Z}_n - \hat{Y}_n| \ge (\log n)^{1/2-\varepsilon} - 2^{-n}|W| \quad \text{i.o} \quad \text{a.s.}
\end{equation*}
This proves
\begin{equation*}
\limsup_{n\to\infty}\frac{\log|\hat{Y}_n|}{\log\log n} \ge \frac{1}{2} \quad \text{a.s.}
\end{equation*}
\end{proof}
\begin{cor}\label{lemma:main2.D}
Assume that $\eta$ has a standard normal distribution. Then
    \begin{equation}\label{main2.Dsup}
        \limsup_{n\to\infty}\frac{\log(|Y_n|2^{-n})}{\log\log n}=\frac{1}{2}\quad \text{{\rm a.s.}}
    \end{equation}
    and
    \begin{equation}\label{main2.Dinf}
        \liminf_{n\to\infty}\frac{\log(|Y_n|2^{-n})}{\log n}=-1\quad \text{{\rm a.s.}}
    \end{equation}
\end{cor}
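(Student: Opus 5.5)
We compare $Y_n2^{-n}$ with $\hat Y_n$ and transfer Corollary \ref{cor:limit}. Recall
\[
Y_n2^{-n}=\sum_{j=1}^n 2^{j-n}\exp\big(-\eee^{j^2-n^2}/2\big)\eta_j .
\]
The $j=n$ term equals $\eee^{-1/2}\eta_n$, which is exactly the leading term of $\hat Y_n$. Hence
\[
Y_n2^{-n}-\hat Y_n=\sum_{j=1}^{n-1}2^{j-n}\Big(\exp\big(-\eee^{j^2-n^2}/2\big)-1\Big)\eta_j=:D_n .
\]
It then suffices to show that $D_n\to0$ a.s. faster than any power of $n$, say $|D_n|\le n^{-2}$ for large $n$ a.s. Granting this, both limit relations follow exactly as in the proof of Corollary \ref{cor:limit}.

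\emph{Lower bounds.} Corollary \ref{cor:limit} gives $|\hat Y_n|\ge n^{-(1+\varepsilon)}$ eventually a.s. Together with $|D_n|=o(n^{-2})$ this yields $|Y_n2^{-n}|\ge n^{-(1+\varepsilon)}/2$ eventually, so the liminf in \eqref{main2.Dinf} is at least $-1$.

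\emph{Upper bounds.} Infinitely often $|\hat Y_n|\le n^{-(1-\varepsilon)}$, and then $|Y_n2^{-n}|\le 2n^{-(1-\varepsilon)}$, so the liminf is at most $-1$.

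\emph{The limsup in \eqref{main2.Dsup}.} Here one uses that $|\hat Y_n|\ge(\log n)^{1/2-\varepsilon}$ infinitely often. One also needs $|\hat Y_n|\le(\log n)^{1/2+\varepsilon}$ eventually, which Corollary \ref{cor:limit} gives. Since $D_n\to0$, both bounds transfer to $Y_n2^{-n}$ up to a factor $1+o(1)$, which is invisible after dividing by $\log\log n$.

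For the bound on $D_n$ we use a variance estimate plus Borel--Cantelli. Since $|\exp(-x/2)-1|\le x/2$ for $x\ge0$ and $j^2-n^2\le -(2n-1)$ for $j\le n-1$, we get
\[
{\rm Var}[D_n]\le\sum_{j=1}^{n-1}4^{j-n}\,\tfrac14\,\eee^{2(j^2-n^2)}\le \eee^{-2(2n-1)}\sum_{j\le n-1}4^{j-n}\le C\eee^{-4n}.
\]
Markov's inequality gives $\mmp\{|D_n|>n^{-2}\}\le Cn^4\eee^{-4n}$, which is summable, and the direct part of the Borel--Cantelli lemma yields $|D_n|\le n^{-2}$ eventually a.s. Gaussianity is not even needed for this step.

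The main obstacle is only bookkeeping: $D_n$ must be small compared with the lower scale $n^{-(1+\varepsilon)}$ of $|\hat Y_n|$, not merely tend to zero. The exponential smallness of ${\rm Var}[D_n]$ makes this comfortable.
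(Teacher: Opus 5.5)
Your proposal is correct and follows essentially the paper's proof: you write $Y_n2^{-n}=\hat Y_n+D_n$, show that $D_n$ is negligible compared with the lower envelope $n^{-(1+\varepsilon)}$ of $|\hat Y_n|$, and transfer Corollary~\ref{cor:limit}. The only difference is how $D_n$ is controlled: you use a variance bound with Chebyshev's inequality and Borel--Cantelli, whereas the paper bounds it pathwise by $O(\eee^{-2n})\sum_{j<n}|\eta_j|=O(n\eee^{-2n})$ via the strong law of large numbers. Either estimate suffices.
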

\begin{proof}
For $j\in\{1,2,\ldots, n-1\}$, $$  0\leq 1-\exp\big(-\eee^{j^2-n^2}/2\big) \le \eee^{j^2-n^2}/2\le \eee^{-2n+1}/2,$$ which implies that uniformly in $j\in\{1,\ldots, n-1\}$ $$\exp\big(-\eee^{j^2-n^2}/2\big)=1+O(\eee^{-2n}),\quad n\to\infty.$$ As a consequence, recalling that $\hat Y_n=\eee^{-1/2}\eta_n+\sum_{j=1}^{n-1}2^{j-n}\eta_j$ for $n\in\mn$,
\begin{multline}\label{main2.E}
\log(|Y_n|2^{-n})=\log\Big|\eee^{-1/2}\eta_n+\sum_{j=1}^{n-1}2^{j-n}\eta_j\Big|+\log\Big|1+\frac{O\Big(\eee^{-2n}
\big|\sum_{j=1}^{n-1}2^{j-n}\eta_j\big|\Big)}{\eee^{-1/2}\eta_n+\sum_{j=1}^{n-1}2^{j-n}\eta_j}\Big|\\=\log|\hat{Y}_n|+o(1)\quad\text{a.s.}
\end{multline}
The last summand in \eqref{main2.E} is $o(1)$ because
\begin{equation*}
\Big|\frac{\sum_{j=1}^{n-1}2^{j-n}\eta_j}{\eee^{-1/2}\eta_n+\sum_{i=1}^{n-1}2^{i-n}\eta_i}\Big|=\Big|\frac{\sum_{j=1}^{n-1}2^{j-n}\eta_j}{\hat Y_n}\Big|
\end{equation*}
has a subexponential growth, which can be checked as follows. Relation \eqref{main2.Iinf1} implies that, for every $\varepsilon>0$, $|\hat Y_n|\geq n^{-(1+\varepsilon)}$ for large $n$ a.s. Further, $$\Big|\sum_{j=1}^{n-1}2^{j-n}\eta_j\Big|\leq 2^{-1}\sum_{j=1}^{n-1}|\eta_j|=O(n),\quad n\to\infty\quad \text{a.s.}$$ by the strong law of large numbers for standard random walks. Thus, $|\sum_{j=1}^{n-1}2^{j-n}\eta_j/\hat Y_n|=O(n^{2+\varepsilon})$ as $n\to\infty$ a.s. In view of \eqref{main2.E}, relations \eqref{main2.Dsup} and \eqref{main2.Dinf} are equivalent to \eqref{main2.Isup} and \eqref{main2.Iinf}, which hold true by Corollary \ref{cor:limit}.
\end{proof}

Finally, we formulate a corollary to Lemma \ref{lemma:main2.C} and Corollary \ref{lemma:main2.D}.
\begin{cor}\label{lemma:main2.CD}
Assume that $\eta$ has a standard normal distribution. Then
\begin{equation}\label{main2.CD}
    \log\Big|1+\frac{Z_n}{Y_n}\Big|=o(1)\quad n\to\infty\quad \text{{\rm a.s.}}
\end{equation}
\end{cor}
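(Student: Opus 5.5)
We need $Z_n/Y_n\to 0$ a.s., since then $\log|1+Z_n/Y_n|\to 0$ by continuity of $\log$ at $1$. The idea is to show that the numerator $Z_n$ is doubly-exponentially small while the denominator $Y_n$ is at least polynomially small, and then compare the two a.s. bounds directly.

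\emph{Numerator.} By Lemma~\ref{lemma:main2.C},
\[
|Z_n| = o\big(\exp(-\eee^{2n}/2)\big)\quad \text{a.s.}
\]

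\emph{Denominator.} By \eqref{main2.Dinf} of Corollary~\ref{lemma:main2.D}, for every $\varepsilon>0$ we have
\[
\log(|Y_n|2^{-n}) \ge -(1+\varepsilon)\log n
\]
for all large $n$ a.s. Taking $\varepsilon=1$, this gives $|Y_n|\ge 2^n n^{-2}$ for large $n$ a.s. It is cleaner to invoke \eqref{main2.E} together with \eqref{main2.Iinf1} from the proof of Corollary~\ref{lemma:main2.D}, which yield $|\hat Y_n|\ge n^{-2}$ eventually and $\log(|Y_n|2^{-n})=\log|\hat Y_n|+o(1)$. Either route gives $|Y_n|\ge c\,2^n n^{-2}$ for large $n$ a.s., for some $c>0$.

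\emph{Ratio.} Combining the two bounds,
\[
\Big|\frac{Z_n}{Y_n}\Big| \le \frac{n^2}{c\,2^{n}}\,\exp(-\eee^{2n}/2)\to 0 \quad \text{a.s.}
\]
In particular $|Z_n/Y_n|<1/2$ eventually, so $\log|1+Z_n/Y_n|$ is well defined for large $n$ and is bounded by $2|Z_n/Y_n|\to 0$. This proves \eqref{main2.CD}.

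The only point needing care is the lower bound on $|Y_n|$. It must hold for \emph{all} large $n$, not merely along a subsequence, which is exactly what the $\liminf$ statement \eqref{main2.Dinf} provides, since a $\liminf$ bound gives an eventual lower bound. Everything else is routine.
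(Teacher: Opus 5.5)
Your proposal is correct and follows essentially the same route as the paper. The paper uses \eqref{main2.Dinf} to get $|Y_n|>n^{-2}2^n$ for all large $n$ almost surely. It then combines this with Lemma~\ref{lemma:main2.C} to obtain $|Z_n/Y_n|=o\big(n^2 2^{-n}\exp(-\eee^{2n}/2)\big)=o(1)$, exactly as you do.
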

\begin{proof}
Limit relation \eqref{main2.Dinf} entails that, for large $n$, $|Y_n|>n^{-2}2^n$ a.s.

This in combination with Lemma \ref{lemma:main2.C} ensures that
    \begin{equation*}
        \Big|\frac{Z_n}{Y_n}\Big|=o\big(n^2 2^{-n}\exp(-\eee^{2n}/2)\big)=o(1),\quad n\to\infty\quad \text{a.s.}
    \end{equation*}
\end{proof}

\section{Proof of Theorem \ref{thm:main4} 
}\label{sect:proofs}

For notational simplicity we work under the assumption $\sigma^2=1$. This can be achieved by replacing $\eta_k$ with $\eta_k/\sigma$. In what follows, $C$, $C_1,\ldots$ denote positive constants whose values are of no relevance and may change from line to line.

We prove the theorem via a series of lemmas. It is assumed without further notice that the assumptions of Theorem \ref{thm:main4} are in force. However, we note in passing that Condition A is not used in the proofs of Lemmas \ref{lem:prelim} and \ref{4}.
\begin{lemma}\label{lem:prelim}
Let $N$ be a positive integer-valued function satisfying $\lim_{s\to 0+}N(s)=\infty$ and $\lim_{s\to 0+}s^{1/2}\log N(s)=0$. Then $$\lim_{s\to 0+}s^{\beta/2}\sum_{k=1}^{N(s)}\frac{a_k}{k^{1/2+s}}\eta_k=0\quad\text{{\rm a.s.}}$$
\end{lemma}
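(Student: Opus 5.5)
The plan is to reduce the claim to a pathwise bound on the partial sums of the martingale
$$S_n:=\sum_{k=1}^n \frac{a_k}{k^{1/2}}\eta_k,\qquad n\in\mn,$$
and then remove the factor $k^{-s}$ by Abel summation. The point is that $k^{-s}$ is essentially $1$ on the range $k\leq N(s)$, because $s\log N(s)\to 0$. So $s^{\beta/2}$ times the sum behaves like $s^{\beta/2}S_{N(s)}$, and this is small since ${\rm Var}[S_{N(s)}]\approx c(\log N(s))^{\beta}\ll s^{-\beta}$.

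\emph{Step 1 (a.s.\ growth bound for $S_n$).} Put $B_n:=\eee+\sum_{k\leq n}a_k^2/k$, so that $B_n\sim c(\log n)^\beta$ by \eqref{eq:basic} and $B_n$ is nondecreasing. Fix $\delta>0$. Using $\sigma^2=1$,
$$\sum_{k\geq1}\frac{a_k^2/k}{B_k(\log B_k)^{1+2\delta}}\leq \int_{\eee}^\infty \frac{{\rm d}x}{x(\log x)^{1+2\delta}}<\infty .$$
Hence the series $\sum_k a_k k^{-1/2}\eta_k/(B_k(\log B_k)^{1+2\delta})^{1/2}$ converges a.s. by the Khintchine--Kolmogorov convergence theorem (or the $L^2$-bounded martingale convergence theorem). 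Kronecker's lemma then gives $S_n=o\big(B_n^{1/2}(\log B_n)^{1/2+\delta}\big)$ a.s. Taking $\delta=1/2$ and using $B_n\sim c(\log n)^\beta$, there is an a.s.\ finite random variable $K$ such that
$$M_n:=\max_{k\leq n}|S_k|\leq K(\log n)^{\beta/2}\log\log n\quad\text{for all } n\geq 16,$$
with the finitely many small $n$ absorbed into $K$.

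\emph{Step 2 (Abel summation).} Write $N=N(s)$. Summation by parts gives
$$\sum_{k=1}^{N}\frac{a_k\eta_k}{k^{1/2+s}}=N^{-s}S_N+\sum_{k=1}^{N-1}S_k\big(k^{-s}-(k+1)^{-s}\big).$$
The mean value theorem yields $0\leq k^{-s}-(k+1)^{-s}\leq s k^{-1-s}\leq s/k$. Therefore the absolute value of the left-hand side is at most
$$M_N\Big(1+s\sum_{k<N}k^{-1}\Big)\leq M_N\big(2+s\log N\big).$$

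\emph{Step 3 (use of the hypothesis on $N$).} Since $s^{1/2}\log N(s)\to0$, for small $s$ we have $\log N(s)\leq s^{-1/2}$. Consequently $s\log N(s)\to0$ and $\log\log N(s)\leq \tfrac12\log(1/s)$. Combining this with Steps 1--2,
$$s^{\beta/2}\Big|\sum_{k=1}^{N(s)}\frac{a_k\eta_k}{k^{1/2+s}}\Big|\leq 3K\,(s\log N(s))^{\beta/2}\log\log N(s)\leq 3K\, s^{\beta/4}(s^{1/2}\log N(s))^{\beta/2}\cdot \tfrac12\log(1/s).$$
This tends to $0$ a.s. because $s^{\beta/4}\log(1/s)\to0$ and $s^{1/2}\log N(s)\to 0$.

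The bound in Step 1 is pathwise and uniform in $n$, so the limit is genuinely along the continuum $s\to0+$; no discretization of $s$ is needed.

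The main point to get right is Step 1. The bound must hold simultaneously for all $n$, which is why a Kronecker-type argument is used rather than Chebyshev inequalities at fixed $n$. Its crude $\log\log n$ factor must also be dominated by the gain from $s^{1/2}\log N\to 0$. The calculation in Step 3 shows that there is ample room: any polylogarithmic loss in $\log N$ would still be absorbed. Condition A is not needed anywhere in this argument.
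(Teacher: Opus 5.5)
Your proof is correct and follows the paper's argument: an almost-sure Kronecker-lemma bound on $\max_{k\le n}|\sum_{j\le k}j^{-1/2}a_j\eta_j|$, then summation by parts to remove the factor $k^{-s}$. The differences are cosmetic. The paper normalizes by $w_k^{\gamma}$ with $\gamma\in(1/2,1)$, getting $o((\log N)^{\gamma\beta})$, and uses the telescoping identity $N^{-s}+\sum_{k<N}(k^{-s}-(k+1)^{-s})=1$ to bound the Abel sum by the maximum itself; you instead normalize by $(B_k(\log B_k)^{2})^{1/2}$ and bound the Abel sum via the mean value theorem.
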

\begin{proof}
We note in passing that Condition A is not used in the proof that follows.

Put $w_n:=1+\sum_{k=1}^n k^{-1}a_k^2$ for $n\in\mn_0$ (in particular, $w_0=1$). Fix any $\gamma\in (1/2,1)$. Since $$\sum_{k\geq 1}{\rm Var}\Big[\frac{k^{-1/2}a_k\eta_k}{w_k^\gamma}\Big]=\sigma^2\sum_{k\geq 1}\frac{w_k-w_{k-1}}{w_k^{2\gamma}}\leq \sigma^2 \sum_{k\geq 1}\int_{w_{k-1}}^{w_k}x^{-2\gamma}{\rm d}x=\sigma^2 (2\gamma-1)^{-1},$$ the series $\sum_{k\geq 1}w_k^{-\gamma} k^{-1/2}a_k\eta_k$ converges a.s. By Kronecker’s lemma (see, for instance, Theorem 2.5.5 on p.~81 in \cite{Durrett}), $\sum_{k=1}^n k^{-1/2}a_k\eta_k=o(w_n^\gamma)$ a.s. and thereupon $\max_{1\leq k\leq n}|\sum_{j=1}^k j^{-1/2}a_j
\eta_j|=o(w_n^\gamma)$ a.s.

Summation by parts yields $$\sum_{k=1}^{N(s)} k^{-1/2-s}a_k\eta_k=(N(s))^{-s}\sum_{k=1}^{N(s)}k^{-1/2}a_k\eta_k+\sum_{k=1}^{N(s)-1}(k^{-s}-(k+1)^{-s})\sum_{j=1}^k j^{-1/2}a_j\eta_j.$$ In view of  $$(N(s))^{-s}+\sum_{k=1}^{N(s)-1}(k^{-s}-(k+1)^{-s})=1$$ we infer
\begin{multline}\label{eq:inter300}
\Big|\sum_{k=1}^{N(s)} k^{-1/2-s}a_k\eta_k\Big|\leq \max_{1\leq j\leq N(s)}\Big|\sum_{i=1}^j i^{-1/2}a_i\eta_i\Big|=o((w_{N(s)})^\gamma)\\=o((\log N(s))^{\gamma \beta}),\quad s\to 0+\quad\text{a.s.}
\end{multline}
having utilized \eqref{eq:basic} for the last equality. Since $\gamma<1$ the claim follows.
\end{proof}
\begin{remark}
With some extra efforts, we could have proved Lemma \ref{lem:prelim} with  $N$ satisfying \newline $\lim_{s\to 0+}s\log N(s)=0$ rather than $\lim_{s\to 0+}s^{1/2}\log N(s)=0$ and $(s^\beta/(\log\log 1/s))^{1/2}$ instead of $s^{\beta/2}$. However, we refrain from doing so, because at the end Lemma \ref{lem:prelim} will be used with $N(s)=\lfloor 1/s\rfloor$. Thus, the present version of the lemma serves our needs.
\end{remark}

For $k\in\mathcal{S}$, $\rho>0$ and $s\in (0, 1/\eee)$, define the event $$\mathcal{A}_{k,\,\rho}(s):=\Big\{|\eta_k|>\frac{\rho}{|a_k|\log 1/s}\Big(\frac{k^{1+s}}{s^{\beta}\log\log 1/s}\Big)^{1/2}\Big\}.$$ In the sequel, sums like $\sum_{k>N(s)}\ldots$ denote $\sum_{k\in\mathcal{S},\,k>N(s)}\ldots$.
\begin{lemma}\label{lem:inter1}
Let $N$ be a positive integer-valued function satisfying $\lim_{s\to 0+}N(s)=\infty$. For all $\rho>0$,
\begin{equation}\label{eq:event}
\sum_{k>N(s)}\frac{|a_k|}{k^{1/2+s}}|\eta_k|\1_{\mathcal{A}_{k,\,\rho}(s)}=0\quad\text{{\rm for all}}~s>0~\text{{\rm close to}}~0\quad\text{{\rm a.s.}};
\end{equation}
\begin{equation}\label{eq:event2}
\lim_{s\to 0+}s^{\beta/2}\sum_{k>N(s)}\frac{|a_k|}{k^{1/2+s}}\me[|\eta_k|\1_{\mathcal{A}_{k,\,\rho}(s)}]=0.
\end{equation}
\end{lemma}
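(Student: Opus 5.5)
The plan is to reduce both statements to Condition A through one deterministic lower bound on the threshold in $\mathcal{A}_{k,\rho}(s)$ that holds uniformly in $s$. Write
$$t_k(s):=\frac{\rho}{|a_k|\log 1/s}\Big(\frac{k^{1+s}}{s^{\beta}\log\log 1/s}\Big)^{1/2}=\frac{\rho\,k^{1/2}}{|a_k|}\cdot k^{s/2}g(s),\qquad g(s):=\frac{s^{-\beta/2}}{\log(1/s)\,(\log\log 1/s)^{1/2}},$$
and set $u_k:=\frac{k(\log k)^\beta}{a_k^2(\log\log k)^2\log^{(3)}k}$ for $k\in\mathcal{S}$. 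This is exactly the quantity counted in Condition A.

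\emph{Step 1 (uniform lower bound).} I will show that there are $c>0$ and $s_0>0$ such that, for all large $k\in\mathcal{S}$ and all $s\in(0,s_0)$,
$$k^{s/2}g(s)\ge c\,\frac{(\log k)^{\beta/2}}{\log\log k\,(\log^{(3)}k)^{1/2}},\qquad\text{hence}\qquad t_k(s)\ge c\rho\,u_k^{1/2}.$$
The argument splits into two ranges of $s$.
\begin{itemize}
\item If $s\le 1/\log k$: the function $g$ is decreasing in $s$ near $0$, so $g(s)\ge g(1/\log k)$, and $g(1/\log k)$ is exactly the displayed right-hand side.
\item If $s>1/\log k$: put $x=s\log k\ge1$. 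Then $k^{s/2}g(s)=\eee^{x/2}x^{-\beta/2}(\log k)^{\beta/2}/\big(\log(1/s)(\log\log 1/s)^{1/2}\big)$. Since $\log(1/s)\le\log\log k$, and $\eee^{x/2}x^{-\beta/2}$ is bounded below on $[1,\infty)$, the same bound follows.
\end{itemize}
I expect this to be the only delicate point. One has to check the monotonicity of $g$, and also that $\log\log(1/s)$ causes no trouble in the second range: there it is bounded by $\log^{(3)}k$.

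\emph{Step 2 (proof of \eqref{eq:event}).} By Step 1, for small $s$ the event $\mathcal{A}_{k,\rho}(s)$ is contained in $\{\eta_k^2>c^2\rho^2u_k\}$, an event that no longer depends on $s$. Let $M(x):=\#\{k\in\mathcal{S}:u_k\le x\}$, so that $M(x)=O(x)$ by Condition A. Then
$$\sum_{k\in\mathcal{S}}\mmp\{\eta^2>c'u_k\}=\int\mmp\{\eta^2>c'x\}\,{\rm d}M(x)\le C\int_0^\infty\mmp\{\eta^2>c'x\}\,{\rm d}x=C\,\me[\eta^2]/c'<\infty.$$
The inequality follows by integrating by parts, using $M(x)=O(x)$ and monotonicity of the tail. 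By the direct part of the Borel--Cantelli lemma, a.s.\ only finitely many $k$ satisfy $\eta_k^2>c'u_k$. Since $N(s)\to\infty$, for all $s$ close to $0$ all these $k$ are at most $N(s)$, and \eqref{eq:event} follows.

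\emph{Step 3 (proof of \eqref{eq:event2}).} From the definition of $t_k(s)$ we have the identity
$$\frac{|a_k|}{k^{1/2+s}}=\frac{\rho\,k^{-s/2}}{t_k(s)\,s^{\beta/2}L(s)}\le\frac{\rho}{t_k(s)\,s^{\beta/2}L(s)},\qquad L(s):=\log(1/s)(\log\log 1/s)^{1/2}.$$
Using it and exchanging sum and expectation, the quantity in \eqref{eq:event2} is bounded by
$$\frac{\rho}{L(s)}\,\me\Big[|\eta|\sum_{k>N(s),\ t_k(s)<|\eta|}\frac{1}{t_k(s)}\Big].$$
By Step 1, $\#\{k:t_k(s)<z\}\le M(z^2/(c\rho)^2)=O(z^2)$ uniformly in $s$. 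Summation by parts against this counting function gives
$$\sum_{t_k(s)<y}\frac{1}{t_k(s)}\le\frac{M(y)}{y}+\int_0^y\frac{M(z)}{z^2}\,{\rm d}z=O(y).$$
Moreover, for $k>N(s)$ we have $t_k(s)\ge c\rho\,u_k^{1/2}\ge T(s)$ with $T(s):=c\rho\min_{k\in\mathcal{S},\,k>N(s)}u_k^{1/2}$. Since Condition A implies $u_k\to\infty$ along $\mathcal{S}$, $T(s)\to\infty$ as $s\to0+$. Hence the expression is at most
$$C\,\me\big[\eta^2\1_{\{|\eta|>T(s)\}}\big]/L(s)\to0,$$
by dominated convergence (the factor $1/L(s)\to0$ helps as well). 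This proves \eqref{eq:event2}.
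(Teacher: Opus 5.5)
Your proposal is correct and follows the paper's proof essentially step for step: the same two-range split (according as $s\le 1/\log k$ or not) yields the uniform lower bound $t_k(s)\ge c\rho\,u_k^{1/2}$ (the paper's \eqref{eq:est1}), \eqref{eq:event} follows from the same Borel--Cantelli argument via $\me[\#\{k\in\mathcal{S}:u_k\le C\eta^2\}]<\infty$, and Condition A enters \eqref{eq:event2} in the same way, as a counting bound for the truncated first moments. The only variation is in \eqref{eq:event2}: you keep $t_k(s)$ in the weight and show $\sum_{k>N(s),\,t_k(s)<y}1/t_k(s)=O(y)$ uniformly in $s$, which even gives the rate $O(1/L(s))$, whereas the paper bounds the weight by $C/Q_k^{1/2}$ and lets the tail over $k>N(s)$ of a single convergent series vanish (also, in your summation by parts the counting function should be $z\mapsto\#\{k>N(s):t_k(s)<z\}\le M(z^2/(c\rho)^2)$, not $M$ itself).
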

\begin{proof}
We first show that
\begin{equation}\label{eq:est1}
\frac{\rho}{|a_k|\log 1/s}\Big(\frac{k^{1+s}}{s^\beta\log\log 1/s}\Big)^{1/2}\geq C\frac{k^{1/2}}{|a_k|}\frac{(\log k)^{\beta/2}}{\log\log k (\log^{(3)}k)^{1/2}}
\end{equation}
for all sufficiently large $k\in \mathcal{S}$.

Let $k$ be integers satisfying $\log k\geq 1/s$. The function $x\mapsto x(\log x)^{1/2}$ is increasing for large arguments. Hence, $$\log 1/s (\log\log 1/s)^{1/2}\leq \log\log k (\log^{(3)}k)^{1/2}.$$ Further, the function $s\mapsto k^{s/2}s^{-\beta/2}$ attains its minimum on $(0,\infty)$ at point $s=\beta (\log k)^{-1}$, whence $$k^{s/2}s^{-\beta/2}\geq (\eee/\beta)^{\beta/2} (\log k)^{\beta/2}.$$ Summarizing, \eqref{eq:est1} holds with $c=\rho (\eee/\beta)^{\beta/2}$.

Now let $k\in\mathcal{S}$ satisfy $\log k\leq 1/s$. The function $x\mapsto \eee^{\beta x/2}/(\log x (\log\log x)^{1/2})$ is increasing for large arguments. This secures $$\frac{1}{\log 1/s}\Big(\frac{k^s}{s^\beta\log\log 1/s}\Big)^{1/2}\geq \frac{1}{\log 1/s (s^\beta\log\log 1/s)^{1/2}}\geq \frac{(\log k)^{\beta/2}}{\log\log k (\log^{(3)}k)^{1/2}}.$$ Thus, \eqref{eq:est1} holds with $c=\rho$.

Put $$Q_k:=\frac{k(\log k)^\beta}{a_k^2 (\log\log k)^2\log^{(3)} k},\quad k\in\mathcal{S}.$$ For other $k$ such that $a_k\neq 0$, if any, put $Q_k=1$. This convention applies throughout without further notice.  

\noindent {\sc Proof of \eqref{eq:event}.} Using \eqref{eq:est1} in combination with Condition A we conclude that
$$\sum_{k\in\mathcal{S}}\mmp(\mathcal{A}_{k,\rho}(s))\leq \sum_{k\in\mathcal{S}}\mmp\{\eta^2>CQ_k\}=\me\big[\#\{k\in\mathcal{S}:~Q_k\leq C^{-1}\eta^2\}\big]\leq C_1(1+\me [\eta^2])<\infty.$$ Invoking the direct part of the Borel-Cantelli lemma completes the proof of \eqref{eq:event}.

\noindent {\sc Proof of \eqref{eq:event2}.} By \eqref{eq:est1}, for all sufficiently large $k\in\mathcal S$,
$\mathcal A_{k,\rho}(s)\subseteq \{|\eta_k|>(CQ_k)^{1/2}\}$. Also,
\begin{equation*}
s^{\beta/2}\frac{|a_k|}{k^{1/2+s}}=\frac{(s\log k)^{\beta/2}\exp(-s\log k)}{Q_k^{1/2}\log\log k\,(\log^{(3)}k)^{1/2}}\leq \frac{C_1}{Q_k^{1/2}},
\end{equation*}
because $\sup_{x>0}x^{\beta/2}\eee^{-x}<\infty$. Since $\lim_{s\to 0+}N(s)=\infty$, the finitely many indices for which these estimates may fail do not occur in the sum for all sufficiently
small $s$. Hence
\begin{equation}\label{eq:event2-bound}
s^{\beta/2}\sum_{k>N(s)}\frac{|a_k|}{k^{1/2+s}}\me\big[|\eta_k|\1_{\mathcal A_{k,\rho}(s)}\big]\leq C_1\sum_{\substack{k\in\mathcal S\\k>N(s)}}Q_k^{-1/2}\me\big[|\eta|\1_{\{|\eta|>(CQ_k)^{1/2}\}}\big].
\end{equation}
We intend to show that the series on the right-hand side, with the
restriction $k>N(s)$ removed, converges, which is sufficient for \eqref{eq:event2}. Enumerate
$\mathcal S$ as $\ell_1,\ell_2,\ldots$ so that $Q_{\ell_1}\leq Q_{\ell_2}\leq\ldots$. Such an enumeration exists because Condition~A implies that every bounded subset of $(Q_k)_{k\in\mathcal S}$ is finite.
Moreover, Condition~A gives, for all sufficiently large $j$, $$j\leq\#\{k\in\mathcal S:~Q_k\leq Q_{\ell_j}\}\leq C Q_{\ell_j},$$ and therefore $Q_{\ell_j}\geq C^{-1}j$. Consequently,
\begin{equation*}
\sum_{k\in\mathcal S}Q_k^{-1/2}\me\big[|\eta|\1_{\{|\eta|>(CQ_k)^{1/2}\}}\big]\leq C_2\sum_{j\geq1}j^{-1/2}\me\big[|\eta|\1_{\{|\eta|>C_3 j^{1/2}\}}\big]\leq C_4\me [\eta^2]<\infty.
\end{equation*}

\end{proof}

Put $A(x):=\sum_{1\leq k\leq x}k^{-1}a_k^2$ for $x\geq 1$. Then \eqref{eq:basic} entails $A(\eee^x)\sim cx^\beta$ as $x\to\infty$. With this at hand, we infer
\begin{equation}\label{eq:variance}
v(s):={\rm Var}[X(s)]=\sum_{k\geq 1}k^{-1-2s}a_k^2=\int_{[0,\infty)}\eee^{-2sx}{\rm d}A(\eee^x)~\sim~\frac{c\Gamma(1+\beta)}{(2s)^\beta},\quad s\to 0+
\end{equation}
having utilized Theorem 1.7.1 in \cite{BGT:1989} for the asymptotic equivalence.

For $k\in\mathcal{S}$, $\rho>0$ and $s\in (0,1/\eee)$, the complement of $\mathcal{A}_{k,\,\rho}(s)$ is given by
$$\mathcal{A}^c_{k,\,\rho}(s)=\Big\{|\eta_k|\leq \frac{\rho}{|a_k| (\log 1/s)} \Big(\frac{k^{1+s}}{s^\beta \log\log 1/s}\Big)^{1/2}\Big\}.$$ Put $\tilde \eta_{k,\rho}(s):=\eta_k\1_{\mathcal{A}^c_{k,\,\rho}(s)}-\me [\eta_k\1_{\mathcal{A}^c_{k,\,\rho}(s)}]$ for $k\in\mathcal{S}$ and $s\in (0,1/\eee)$. The random variables so defined are bounded and centered.
\begin{lemma}\label{4}
Let $N$ be a positive integer-valued function. 
Fix any $\gamma\in (0, (\sqrt{5}-1)/2)$, pick any $\rho=\rho(\gamma)$ satisfying
\begin{equation}\label{eq:rho_choice}
(1-\gamma)(1+\gamma)^2(2-\exp(8(1+\gamma)\rho c_\beta))>1,
\end{equation}
where $c_\beta:=(c\,2^{1-\beta}\Gamma(1+\beta))^{-1/2}$, and put $g(s):=(2v(s)\log\log 1/s)^{-1/2}$ for $s\in (0,1/\eee)$ and $s_n:=\exp(-n^{1-\gamma})$ for $n\in\mn$. 
Then
$$\limsup_{n\to\infty}g(s_n)\sum_{k>N(s_n)}\frac{a_k\tilde \eta_{k,\rho}(s_n)}{k^{1/2+s_n}}\leq 1+\gamma\quad \text{{\rm a.s.}}$$
\end{lemma}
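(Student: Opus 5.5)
This is the upper-half LIL for bounded centered variables along the geometric-type sequence $s_n$. The plan is an exponential Chebyshev bound, a choice of $\lambda$, and the direct part of the Borel--Cantelli lemma; Condition A and $N$ play no role here. Write $T_n:=\sum_{k>N(s_n)}a_k k^{-1/2-s_n}\tilde\eta_{k,\rho}(s_n)$; this is a sum of independent, centered, bounded random variables.

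\textbf{Bounds on the summands.} By the definition of $\mathcal{A}^c_{k,\rho}(s)$, each summand satisfies
$$\Big|\frac{a_k\tilde\eta_{k,\rho}(s)}{k^{1/2+s}}\Big|\leq \frac{2\rho}{\log 1/s}\Big(\frac{k^{1+s}}{s^\beta\log\log 1/s}\Big)^{1/2}k^{-1/2-s}\leq \frac{2\rho}{\log 1/s\,(s^\beta\log\log 1/s)^{1/2}}=:M(s).$$
Here I use $k^{(1+s)/2-1/2-s}=k^{-s/2}\le 1$. The variance satisfies ${\rm Var}[T_n]\leq \sum_k a_k^2k^{-1-2s_n}=v(s_n)$, since truncation and centering only decrease the second moment. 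By \eqref{eq:variance}, $v(s)\sim c\Gamma(1+\beta)(2s)^{-\beta}$, so $v(s)^{1/2}= (1+o(1))\,(2c_\beta^{2})^{-1/2}s^{-\beta/2}$ by the definition of $c_\beta$. Thus $M(s)\lambda$ is small compared with $1$ for the choice of $\lambda$ below.

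\textbf{Exponential bound and choice of $\lambda$.} I will use the standard inequality: for $|\xi|\le M$ centered and $\lambda M\le 1$,
$$\me[\eee^{\lambda\xi}]\le \exp\big(\tfrac{\lambda^2}{2}\me[\xi^2](1+\lambda M)\big),$$
or the version $\me[\eee^{\lambda\xi}]\le\exp(\tfrac{\lambda^2}{2}\me[\xi^2]\eee^{\lambda M})$. This gives $\me[\eee^{\lambda T_n}]\le \exp(\lambda^2 v(s_n)\eee^{\lambda M(s_n)}/2)$. Put $x_n:=(1+\gamma)(2v(s_n)\log\log 1/s_n)^{1/2}$ and take $\lambda_n:=(1+\gamma)(2\log\log 1/s_n/v(s_n))^{1/2}$. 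Then
$$\lambda_nM(s_n)\le 2(1+\gamma)\rho\cdot 2^{1/2}\cdot\frac{(\log\log 1/s_n)^{1/2}}{\log 1/s_n\,(\log\log1/s_n)^{1/2}}\cdot\frac{s_n^{-\beta/2}}{v(s_n)^{1/2}}.$$
This is $O(\rho c_\beta/\log 1/s_n)$, and is at most $8(1+\gamma)\rho c_\beta$ for large $n$; the constant in \eqref{eq:rho_choice} is calibrated so that this crude bound suffices. Markov's inequality then gives
$$\mmp\{T_n>x_n\}\le\exp\big(-\lambda_nx_n+\tfrac12\lambda_n^2v(s_n)\eee^{\lambda_nM}\big)\le \exp\big(-(1+\gamma)^2(2-\eee^{8(1+\gamma)\rho c_\beta})\log\log 1/s_n\big).$$
Since $\log\log 1/s_n=(1-\gamma)\log n$, the right-hand side equals $n^{-(1-\gamma)(1+\gamma)^2(2-\exp(8(1+\gamma)\rho c_\beta))}$. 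By \eqref{eq:rho_choice} the exponent exceeds $1$, so the sequence is summable.

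\textbf{Conclusion and main obstacle.} The direct part of the Borel--Cantelli lemma gives $g(s_n)T_n\le 1+\gamma$ for large $n$ a.s., which is the claim. The main obstacle is bookkeeping: making sure the variance bound uses $v(s_n)$ itself rather than its asymptotic, and that the bound on $\lambda_nM(s_n)$ holds uniformly. If $\lambda_n M$ is not literally $\le 1$, the $\eee^{\lambda M}$ form of the inequality (valid for any $\lambda>0$) avoids that restriction. The condition $\gamma<(\sqrt5-1)/2$, equivalently $(1-\gamma)(1+\gamma)^2>1$ up to the factor involving $\rho$, is exactly what makes a suitable $\rho$ exist.
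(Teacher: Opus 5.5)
Your proposal is correct and follows the paper's proof essentially step for step. You use the same exponential Chebyshev bound built from $\eee^x\le 1+x+(x^2/2)\eee^{|x|}$, the same exponent (your $\lambda_n$ is the paper's $u=2(1+\gamma)\log\log 1/s_n$ multiplied by $g(s_n)$), and the direct Borel--Cantelli lemma along $s_n=\exp(-n^{1-\gamma})$. The only difference is that you keep the factor $\log 1/s$ in the truncation bound, so $\lambda_n M(s_n)\to 0$, whereas the paper discards it and works with the cruder constant $\exp(8(1+\gamma)\rho c_\beta)$; both suffice.
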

\begin{proof}
This proof runs along the lines of the proof of Lemma 5.4 in \cite{Buraczewski etal:2023}. In particular, the existence of so defined $\rho$ is justified in that proof.

Put $$Y(s):=g(s)\sum_{k>N(s)}\frac{a_k\tilde \eta_{k,\rho}(s)}{k^{1/2+s}},\quad s\in (0,1/\eee).$$ Using $\eee^x\leq 1+x+(x^2/2)\eee^{|x|}$ for $x\in\mr$ and $\me[\tilde \eta_{k,\rho}(s)]=0$ we infer, for $u\in\mr$,
\begin{multline*}
\me [\eee^{uY(s)}]=\prod_{k>N(s)}\me \exp\Big(ug(s)\frac{a_k \tilde \eta_{k,\rho}(s)}{k^{1/2+s}}\Big)\\\leq \prod_{k>N(s)} \Big(1+\frac{u^2 (g(s))^2}{2}\frac{a_k^2}{k^{1+2s}}\me\Big[(\tilde \eta_{k,\rho}(s))^2 \exp\Big(|u|g(s)\frac{|a_k\tilde \eta_{k,\rho}(s)|}{k^{1/2+s}}\Big)\Big]\Big).
\end{multline*}
The inequality
\begin{equation}\label{eq:mm1}
\begin{split}
|\tilde \eta_{k,\rho}(s)|  \leq |\eta_k|\1_{\mathcal{A}^c_{k,\,\rho}(s)}+\me (|\eta_k|\1_{\mathcal{A}^c_{k,\,\rho}(s)})&\leq \frac{2\rho k^{(1+s)/2}}{|a_k| (\log 1/s) (s^\beta \log\log 1/s)^{1/2}}\\ &\leq \frac{2\rho k^{1/2+s}}{|a_k| (s^\beta\log\log 1/s)^{1/2}}\quad \text{a.s.}
\end{split}
\end{equation}
holds true for $k\in\mathcal{S}$ and $s\in (0,1/\eee)$. Hence, in view of \eqref{eq:variance},
\begin{equation}\label{eq:mm112}
|\tilde \eta_{k,\rho}(s)| \leq \frac{4\rho c_\beta k^{1/2+s}}{|a_k|}\Big(\frac{2v(s)}{\log\log 1/s}\Big)^{1/2}\quad \text{a.s.}
\end{equation}
for $k\in\mathcal{S}$ and $s>0$ sufficiently close to $0$, which entails $$\exp\Big(|u|g(s)\frac{|a_k\tilde \eta_{k,\rho}(s)|}{k^{1/2+s}}\Big)\leq \exp\Big(\frac{4\rho c_\beta |u|}{\log\log 1/s}\Big)\quad \text{a.s.}$$ This in combination with the inequalities $\me [(\tilde \eta_{k,\rho}(s))^2]\leq 1$ and $ 1+x\leq \eee^x$ for $x\in\mr$ yields, for $u\in\mr$,
\begin{multline}\label{eq:ss1}
\me [\eee^{uY(s)}]\leq \prod_{k>N(s)}\exp\Big(\frac{u^2 (g(s))^2}{2} \frac{a_k^2}{k^{1+2s}}\exp\Big(\frac{4\rho c_\beta |u|}{\log\log 1/s}\Big)\Big)\\ \leq \exp\Big(\frac{u^2}{4\log\log 1/s}\exp\Big(\frac{4\rho c_\beta |u|}{\log\log 1/s}\Big)\Big).
\end{multline}
By Markov's inequality, for $u\geq 0$,
$$\mmp\{Y(s_n)>1+\gamma\}\leq \eee^{-(1+\gamma)u}\me [\eee^{uY(s_n)}]\leq \exp\Big(-(1+\gamma)u+\frac{u^2}{4\log\log 1/s_n}\exp\Big(\frac{4\rho c_\beta u}{\log\log 1/s_n}\Big)\Big).$$ Putting $u=2(1+\gamma)\log\log 1/s_n$ we obtain
\begin{multline*}
\mmp\{Y(s_n)>1+\gamma\}\leq \exp(-(1+\gamma)^2(2-\exp(8(1+\gamma)\rho c_\beta))\log\log 1/s_n)\\=\frac{1}{n^{(1-\gamma)(1+\gamma)^2(2-\exp(8(1+\gamma)\rho c_\beta))}}.
\end{multline*}
Hence, $\sum_{n\geq 1}\mmp\{Y(s_n)>1+\gamma\}<\infty$, and an appeal to the direct part of the Borel-Cantelli lemma completes the proof of Lemma \ref{4}.
\end{proof}

\begin{lemma}\label{5}
Let $N$ be a positive nonincreasing integer-valued function satisfying $\lim_{s\to 0+}N(s)=\infty$ and $\lim_{s\to 0+}s\log N(s)=0$. Let $\rho=\rho(\gamma)$ and $(s_n)_{n\geq 1}$ be as in Lemma \ref{4} with the only difference that $\gamma\in (0,1/2)$. Then $$\lim_{n\to\infty}\sup_{s\in [s_{n+1}, s_n]}\, g(s)\Big|\sum_{k>N(s)}\frac{a_k}{k^{1/2+s}}\tilde \eta_{k,\,\rho}(s)-\sum_{k>N(s_{n+1})} \frac{a_k}{k^{1/2+s_{n+1}}}\tilde \eta_{k,\,\rho}(s_{n+1})\Big|=0\quad\text{{\rm a.s.}}$$
\end{lemma}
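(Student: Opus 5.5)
Plan: the difference splits into three pieces, each bounded uniformly over $s\in[s_{n+1},s_n]$:
\begin{equation*}
D(s)=\sum_{N(s)<k\le N(s_{n+1})}\frac{a_k\tilde\eta_{k,\rho}(s)}{k^{1/2+s}}
+\sum_{k>N(s_{n+1})}a_k\Big(\frac{1}{k^{1/2+s}}-\frac{1}{k^{1/2+s_{n+1}}}\Big)\tilde\eta_{k,\rho}(s)
+\sum_{k>N(s_{n+1})}\frac{a_k(\tilde\eta_{k,\rho}(s)-\tilde\eta_{k,\rho}(s_{n+1}))}{k^{1/2+s_{n+1}}}.
\end{equation*}
Since $N$ is nonincreasing, $N(s)\le N(s_{n+1})$ for $s\ge s_{n+1}$, so the first sum runs over $k\le N(s_{n+1})$. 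The normalization $g(s)\asymp s^{\beta/2}(\log\log 1/s)^{-1/2}$ by \eqref{eq:variance}. Also $s_n/s_{n+1}\to1$, and in fact $s_n-s_{n+1}\sim(1-\gamma)n^{-\gamma}s_n$, which is a small relative increment.

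\textbf{First piece.} Undo the truncation. Since $\eta_k-\tilde\eta_{k,\rho}(s)=\eta_k\1_{\mathcal A_{k,\rho}(s)}-\me[\eta_k\1_{\mathcal A_{k,\rho}(s)}]$, Lemma \ref{lem:inter1} with a fixed finite cut-off $N_0$ in place of $N$ shows that this correction contributes $o(s^{-\beta/2})$, uniformly in $s$ and after multiplying by $g(s)$. For the untruncated sum over $k\le N(s_{n+1})$, use the Abel-summation bound \eqref{eq:inter300}. It gives $\sup_s|\sum_{k\le N(s_{n+1})}\dots|\le\max_{j\le N(s_{n+1})}|\sum_{i\le j}i^{-1/2}a_i\eta_i|=o((\log N(s_{n+1}))^{\gamma'\beta})$ for any $\gamma'\in(1/2,1)$, where $\gamma'$ is the exponent from Lemma \ref{lem:prelim}. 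The same bound holds with the lower limit $N(s)$, by subtracting. The hypothesis $s\log N(s)\to0$ gives $(\log N)^{\gamma'\beta}=o(s^{-\gamma'\beta})$, and $\gamma'<1$ then kills $s^{\beta/2}$ only if $\gamma'\beta\le\beta/2$, which fails. So here I would rather use the crude bound $\max_{j\le N}|\sum_{i\le j}|=O((w_N\log\log w_N)^{1/2})$, the LIL-type estimate for the martingale, which follows from Kolmogorov's maximal inequality along geometric blocks. This gives $O((\log N)^{\beta/2}(\log\log)^{1/2})=o(s^{-\beta/2}(\log\log1/s)^{1/2})$, since $\log N=o(1/s)$. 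This step is the delicate one.

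\textbf{Second piece.} Use the mean value theorem: $|k^{-s}-k^{-s_{n+1}}|\le(s_n-s_{n+1})\log k\,k^{-s_{n+1}}$. The remaining sum is bounded by a deterministic multiple of $\sum_k|a_k|k^{-1/2-s_{n+1}}\log k\,|\tilde\eta_{k,\rho}(s)|$. Bounding it directly does not work, so I would control it via a second moment plus a chaining argument in $s$. Concretely, $\tilde\eta_{k,\rho}(s)$ depends on $s$ only through the truncation level, which is monotone on the short interval. Write $\tilde\eta(s)=\tilde\eta(s_{n+1})+$(a contribution from the events where $|\eta_k|$ lies between two nearby truncation levels). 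This contribution is also what the third piece consists of.

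\textbf{Third piece: the main obstacle.} Bound it by $\sum_k|a_k|k^{-1/2-s_{n+1}}|\eta_k|\1\{\text{level}(s_n)<|\eta_k|\le\text{level}(s_{n+1})\}$ plus its mean. I would show that its expectation times $g(s_{n+1})$ tends to $0$, in the same way as in the proof of \eqref{eq:event2}, using Condition A through the ordering $Q_{\ell_j}\ge C^{-1}j$. The ratio of consecutive truncation levels tends to $1$, and this is where the smallness comes from. Almost-sure convergence then follows from Markov's inequality, a Borel--Cantelli argument along the $s_n$, and monotonicity in $s$ of the indicator bounds, which makes the supremum over the interval harmless.

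For the genuinely $s$-dependent weights $k^{-s}$, I would use the martingale-increment method of Lemma 5.5 in \cite{Buraczewski etal:2023}. Split $[s_{n+1},s_n]$ into $n^{c}$ subintervals, apply the exponential bound \eqref{eq:ss1} to the weight differences, whose variance is $O((s_n-s_{n+1})^2s^{-2}v(s))=O(n^{-2\gamma}v)$, and take a union bound. This yields probabilities $\exp(-c\,n^{2\gamma}\cdot\text{small})$, which are summable. That verifying this union bound against the growth of $\log\log$ is the crux, and it is where the restriction $\gamma<1/2$ should enter.
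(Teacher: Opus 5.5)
Your three-way split is a valid identity, and the overall architecture is the right one: a maximal bound for the change of summation range, and chaining with exponential bounds for the change of weights $k^{-s}$. However, the step you flag as delicate rests on a tool that cannot deliver it. Kolmogorov's maximal inequality gives only Chebyshev-type tails. Along blocks with $w_{n_m}\approx 2^m$ it yields $\mmp\{\max_{j\le n_m}|S_j|>(Kw_{n_m}\log\log w_{n_m})^{1/2}\}\le (K\log\log w_{n_m})^{-1}\asymp (K\log m)^{-1}$, which is not summable. Second-moment arguments of this kind, including Kronecker's lemma as in Lemma \ref{lem:prelim}, produce a factor of order $\log w_N$ where you need $\log\log w_N$. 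After multiplication by $g(s)$ the resulting bound is of order at least $(s\log N)^{\beta/2}(\log\log N)^{1/2}(\log\log 1/s)^{-1/2}$. Under the sole hypothesis $s\log N(s)\to 0$ this need not tend to $0$; take for instance $N(s)=\lfloor\exp(1/(s\log^{(3)}(1/s)))\rfloor$. For $N(s)=\lfloor 1/s\rfloor$, the only choice used later, even the Kronecker bound would suffice, but not for the lemma as stated. The sharp rate $O((w_N\log\log w_N)^{1/2})$ is the upper half of a LIL for the weighted sums $\sum_k k^{-1/2}a_k\eta_k$. With only a second moment, this requires truncation, exponential inequalities, and a condition preventing single summands from dominating, and it fails in general without such a condition. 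The paper obtains it from Tomkins' LIL for weighted sums, verifying both its Borel--Cantelli hypothesis and its Lindeberg-type hypothesis via Condition A through \eqref{eq:effective-level}. This ingredient is missing from your plan.

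Your handling of the truncation is misdirected, and the chaining is incomplete. The change of truncation level is not an obstacle. By \eqref{eq:est1}, $\mathcal A_{k,\rho}(s)\subseteq\{\eta_k^2>CQ_k\}$ for all $s\in(0,1/\eee)$ and all large $k\in\mathcal S$. Hence, by Lemma \ref{lem:inter1}, almost surely $\eta_k\1_{\mathcal A_{k,\rho}(s)}=0$ for all $k>N(s)$ and all small $s$ simultaneously, and the centering terms are controlled by \eqref{eq:event2}. The paper therefore removes the truncation at both $s$ and $s_{n+1}$ (the terms $J_{n,2}$ and $J_{n,3}$). Your substitute argument does not work as stated. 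The truncation level contains the factor $k^{s/2}$, so the ratio of the levels at $s_n$ and $s_{n+1}$ is not close to $1$ uniformly in $k$. Also, an expectation tending to $0$ does not give the summability that Borel--Cantelli needs. After this reduction the only $s$-dependent piece is $J_{n,4}$, whose coefficients $\tilde\eta_{k,\rho}(s_{n+1})$ are frozen; your second piece still carries $\tilde\eta_{k,\rho}(s)$. Moreover, a union bound over $n^c$ grid points does not control the supremum between grid points. The paper closes this with three ingredients:
\begin{itemize}
\item the almost-sure continuity of the Dirichlet series $Y_n$;
\item a dyadic chaining over all levels $j\ge 0$ with thresholds $\varepsilon(j+1)2^{-j}$;
\item the variance bound $\sum_k(a_k\log k)^2k^{-1-2s}=v''(s)/4\asymp s^{-\beta-2}$ from the monotone density theorem.
\end{itemize}
The restriction $\gamma<1/2$ enters through $\ell_n\asymp n^{2\gamma-1}\to0$, which keeps the truncated summands small relative to the chosen exponent $u$.
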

\begin{proof}
Let $s\in [s_{n+1}, s_n]$. Using the assumption that $N$ is a nonincreasing function, write
\begin{multline*}
\sum_{k>N(s)}\frac{a_k \tilde \eta_{k,\,\rho}(s)}{k^{1/2+s}}-\sum_{k> N(s_{n+1})}\frac{a_k \tilde \eta_{k,\,\rho}(s_{n+1})}{k^{1/2+s_{n+1}}}\\=
\sum_{k=N(s)+1}^{N(s_{n+1})}\frac{a_k\eta_k}{k^{1/2+s}}-\sum_{k>N(s)} \frac{a_k (\eta_k\1_{\mathcal{A}_{k,\,\rho}(s)}-\me[\eta_k\1_{\mathcal{A}_{k,\,\rho}(s)}])}{k^{1/2+s}}\\+\sum_{k>N(s_{n+1})}\frac{a_k (\eta_k\1_{\mathcal{A}_{k,\,\rho}(s_{n+1})}-\me[\eta_k\1_{\mathcal{A}_{k,\,\rho}(s_{n+1})}])}{k^{1/2+s}}\\+ \sum_{k>N(s_{n+1})}a_k \Big(\frac{1}{k^{1/2+s}}-\frac{1}{k^{1/2+s_{n+1}}}\Big)\tilde \eta_{k,\,\rho}(s_{n+1})=:J_{n,1}(s)-J_{n,2}(s)+J_{n,3}(s)+J_{n,4}(s).
\end{multline*}

\noindent {\sc Analysis of $J_{n,1}(s)$}. By the reasoning leading to \eqref{eq:inter300}, $$\Big|\sum_{k=N(s)+1}^{N(s_{n+1})}\frac{a_k\eta_k}{k^{1/2+s}}\Big|\leq 2\max_{1\leq m\leq N(s_{n+1})}\Big|\sum_{k=1}^m\frac{a_k\eta_k}{k^{1/2}}\Big|.$$ We first prove that
\begin{equation}\label{eq:weightedLIL}
\max_{1\leq m\leq n}\Big|\sum_{k=1}^m\frac{a_k\eta_k}{k^{1/2}}\Big|=O\big((w_n\log\log w_n)^{1/2}\big),\quad n\to\infty \quad\text{a.s.},
\end{equation}
where, as before, $w_n=1+\sum_{k=1}^n k^{-1}a_k^2$ for $n\in\mn_0$.

Let $k_1<k_2<\cdots$ be the increasing enumeration of $\mathcal S$. Put $b_j:=k_j^{-1} a_{k_j}^2$ and $B_j:=\sum_{i=1}^j b_i$ for $j\in\mn$. By \eqref{eq:basic}, $B_j\sim c(\log k_j)^\beta$ as $j\to\infty$.
Recall that $$Q_k=\frac{k(\log k)^\beta}{a_k^2(\log\log k)^2\log^{(3)}k},\quad k\in\mathcal{S}.$$ 
Since, as $j\to\infty$, $$b_j=\frac{(\log k_j)^\beta}{Q_{k_j}(\log\log k_j)^2\log^{(3)}k_j}$$
and $\log\log B_j\sim\log^{(3)}k_j$, we obtain
\begin{equation}\label{eq:effective-level}
\frac{B_j}{b_j\log\log B_j}~\sim~c Q_{k_j}(\log\log k_j)^2.
\end{equation}

We apply Theorem~1.1 in \cite{Tomkins:1983}. First, for each $\varepsilon>0$, \eqref{eq:effective-level} gives
\begin{equation*}
\sum_{j\geq1}\mmp\Big\{b_j^{1/2}|\eta_{k_j}|>\varepsilon \Big(\frac{B_j}{\log\log B_j}\Big)^{1/2}\Big\}\leq \sum_{k\in\mathcal S}\mmp\{\eta^2>C_\varepsilon Q_k\}<\infty
\end{equation*}
and thereupon $$\lim_{j\to\infty}\Big(\frac{b_j\log\log B_j}{B_j}\Big)^{1/2}\eta_{k_j}=0\quad\text{a.s.}$$ by the direct part of the Borel-Cantelli lemma. To justify the convergence of the last series, observe that  Condition~A implies
$$\#\{k\in\mathcal S:Q_k\leq x\}\leq C(1+x), \quad x>0,$$ and therefore $$\sum_{k\in\mathcal{S}}\mmp\{\eta^2>C_\varepsilon Q_k\}=\me\big[\#\{k\in\mathcal S:Q_k<C_\varepsilon^{-1}\eta^2\}\big]\leq C\big(1+\me[\eta^2]\big)<\infty.$$

Moreover, \eqref{eq:effective-level} and $\lim_{j\to\infty}Q_{k_j}=\infty$ imply that  
$$\lim_{j\to\infty}\Big(\frac{B_j}{b_j\log\log B_j}\Big)^{1/2}=\infty.$$ Consequently, for each $\varepsilon>0$, 
$\lim_{j\to\infty} \me\big[\eta^2\1_{\{|\eta|>\varepsilon (B_j/(b_j\log\log B_j))^{1/2}\}}\big]=0$. Since $B_n=\sum_{j\leq n}b_j\to\infty$ as $n\to\infty$, the Toeplitz lemma yields
$$\lim_{n\to\infty}\frac{1}{B_n}\sum_{j=1}^n b_j\me\big[\eta^2\1_{\{|\eta|>\varepsilon (B_j/(b_j\log\log B_j))^{1/2}\}}\big]=0.$$ Hence, the modified Lindeberg functions appearing in
Theorem~1.1 of \cite{Tomkins:1983} converge to $1$. By that theorem, $$\limsup_{n\to\infty}\frac{\sum_{j=1}^n k_j^{-1/2}a_{k_j}\eta_{k_j}}{(2B_n\log\log B_n)^{1/2}}=1 \quad\text{a.s.}$$ with the corresponding lower limit equal to $-1$. Since $(B_j)_{j\geq 1}$ is a nondecreasing sequence and the function $x\mapsto (x\log\log x)^{1/2}$ is eventually increasing, we conclude 
that $$\max_{1\leq j\leq n}\Big|\sum_{i=1}^j k_i^{-1/2}a_{k_i}\eta_{k_i}\Big|=O((B_n\log\log B_n)^{1/2}),\quad n\to\infty\quad\text{a.s.}$$ Recall the notation $\rho(x)=\#\{k\in\mathcal{S}:~k\leq x\}$. Noting that $$B_{\rho(n)}=\sum_{k_j\leq n}k_j^{-1}a_{k_j}^2=\sum_{k=1}^n k^{-1}a_k^2=w_n-1$$ we finally obtain
\begin{multline*} \max_{1\leq m\leq n}\Big|\sum_{k=1}^m k^{-1/2}a_k\eta_k \Big|=
\max_{1\leq j\leq \rho(n)}\Big|\sum_{i=1}^j k_i^{-1/2}a_{k_i}\eta_{k_i}\Big|=O\big((B_{\rho(n)}\log\log B_{\rho(n)})^{1/2}\big)\\=O\big((w_n\log\log w_n)^{1/2}\big),\quad n\to\infty\quad\text{a.s.}
\end{multline*}

Now, uniformly over $s\in [s_{n+1}, s_n]$,
\begin{multline*}
g(s)|J_{n,1}(s)|\leq 2g(s_n)\max_{1\leq m\leq N(s_{n+1})} \Big|\sum_{k=1}^m\frac{a_k\eta_k}{k^{1/2}}\Big|\\=O\big(g(s_n)(w_{N(s_{n+1})}\log\log w_{N(s_{n+1})})^{1/2}\big),\quad n\to\infty\quad \text{a.s.}
\end{multline*}
By \eqref{eq:basic}, $w_n\sim c(\log n)^\beta$ and, by \eqref{eq:variance}, $g(s_n)\sim g(s_{n+1})\sim {\rm const}\,s_{n+1}^{\beta/2}(\log\log 1/s_{n+1})^{-1/2}$. Hence, the last expression is $$O\Big((s_{n+1}\log N(s_{n+1}))^{\beta/2}\Big(\frac{\log^{(3)} N(s_{n+1})}{\log\log 1/s_{n+1}}\Big)^{1/2}\Big)=o(1),\quad n\to\infty\quad\text{a.s.}$$ Indeed, $\lim_{n\to\infty}s_{n+1}\log N(s_{n+1})=0$  by the choice of $N(s)$. The latter limit relation also implies that the square-root factor is bounded. Thus,
$$\lim_{n\to\infty}\sup_{s\in[s_{n+1},s_n]}g(s)|J_{n,1}(s)|=0\quad\text{a.s.}$$

\noindent {\sc Analysis of $J_{n,2}(s)$ and $J_{n,3}(s)$}. We only treat $J_{n,3}(s)$. The argument for $J_{n,2}(s)$ is analogous. For $s\in [s_{n+1}, s_n]$,
\begin{multline*}
g(s)|J_{n,3}(s)|\\\leq\frac{g(s_n)}{g(s_{n+1})} g(s_{n+1})\sum_{k> N(s_{n+1})} \frac{|a_k|(|\eta_k|\1_{\mathcal{A}_{k,\,\rho}(s_{n+1})}+\me [|\eta_k|\1_{\mathcal{A}_{k,\,\rho}(s_{n+1})}])}{k^{1/2+s}}~\to~ 0,\quad n\to\infty\quad \text{a.s.},
\end{multline*}
where the limit relation is secured by Lemma \ref{lem:inter1} and $\lim_{n\to\infty}(g(s_n)/g(s_{n+1}))=1$. Here, we have used $g(s)\leq g(s_n)$ and $k^{-1/2-s}\leq k^{-1/2-s_{n+1}}$.
Thus, $$\lim_{n\to\infty}\sup_{s\in [s_{n+1},s_n]}g(s)|J_{n,3}(s)|=0\quad\text{a.s.}$$  

\noindent {\sc Analysis of $J_{n,4}(s)$}. For $n\in\mn$ and $u>0$, put
$$Y_n(u):=\sum_{k>N(s_{n+1})}\frac{a_k\tilde\eta_{k,\rho}(s_{n+1})}{k^{1/2+u}}.$$ Since $J_{n,4}(s)=Y_n(s)-Y_n(s_{n+1})$,  it is enough to prove that
\begin{equation}\label{eq:3}
\lim_{n\to\infty}
\sup_{u\in[s_{n+1},s_n]}
g(s_n)|Y_n(u)-Y_n(s_{n+1})|=0
\quad\text{{\rm a.s.}}
\end{equation}
Indeed, $g(s)\leq g(s_n)$ for $s\in[s_{n+1},s_n]$ and all
sufficiently large $n$.

We first note that, almost surely, the same series with $u$ replaced by a complex variable defines an analytic extension of $Y_n$ to $H_0:=\{z\in\mathbb{C}: {\rm Re}\, z>0\}$. Indeed, for every $q\in\mathbb Q\cap(0,\infty)$, where $\mathbb{Q}$ is the set of rational numbers,
$$\sum_{k>N(s_{n+1})}{\rm Var}\Big[\frac{a_k\widetilde\eta_{k,\,\rho}(s_{n+1})}{k^{1/2+q}}\Big]\leq
\sum_{k\geq1}\frac{a_k^2}{k^{1+2q}}<\infty,$$
where we have used ${\rm Var}[\widetilde\eta_{k,\,\rho}(s_{n+1})]\leq \me[\eta_k^2]=1$. Since the summands are independent and centered, the corresponding
series converges almost surely. Taking the intersection over
$q\in\mathbb Q\cap(0,\infty)$, we conclude that, for every
$n\in\mn$, its abscissa of convergence is a.s.\ at most zero.
Hence, by the general theory of Dirichlet series (for instance, Sections~9.11-9.12 in \cite{Titchmarsh:1939}), the same series defines
an analytic extension of $Y_n$ to $H_0$. Since $\mn$ is countable,
this holds simultaneously for all $n$ on an event of probability
one. In particular, on this event every $Y_n$ is continuous on
$(0,\infty)$.

For $j\in\mn_0$ and $n\in\mn$, put $$F_j(n):=\big\{t_{j,m}:=s_{n+1}+2^{-j}m(s_n-s_{n+1}):\ 0\leq m\leq 2^j\big\}$$. Then $F_j(n)\subseteq F_{j+1}(n)$ and
$\bigcup_{j\geq0}F_j(n)$ is dense in $[s_{n+1},s_n]$.
For $u\in[s_{n+1},s_n]$, define $$u_j:=\max\{v\in F_j(n):v\leq u\}
=s_{n+1}+2^{-j}(s_n-s_{n+1})\Big\lfloor\frac{2^j(u-s_{n+1})}{s_n-s_{n+1}}
\Big\rfloor.$$ Then $\lim_{j\to\infty} u_j=u$. Moreover, for every $j\in\mn$, either $u_j=u_{j-1}$ or $u_j-u_{j-1}=2^{-j}(s_n-s_{n+1})$. Consequently, if $u_j=t_{j,m}$, then either $u_{j-1}=t_{j,m}$
or $u_{j-1}=t_{j,m-1}$.

We now use the almost-sure continuity established above: since $\lim_{\ell\to\infty}u_\ell=u$,
$$\lim_{\ell\to\infty} Y_n(u_\ell)=Y_n(u) \quad\text{a.s.}$$
Therefore,
\begin{multline*}
|Y_n(u)-Y_n(s_{n+1})|=\lim_{\ell\to\infty} |Y_n(u_\ell)-Y_n(s_{n+1})|\\\leq \lim_{\ell\to\infty} \sum_{j=0}^{\ell}\max_{1\leq m\leq 2^j}|Y_n(t_{j,m})-Y_n(t_{j,m-1})|
=\sum_{j\geq0} \max_{1\leq m\leq 2^j}|Y_n(t_{j,m})-Y_n(t_{j,m-1})|.
\end{multline*}
Since the right-hand side does not depend on $u$, it follows that
$$\sup_{u\in[s_{n+1},s_n]}|Y_n(u)-Y_n(s_{n+1})|\leq \sum_{j\geq 0} \max_{1\leq m\leq 2^j}|Y_n(t_{j,m})-Y_n(t_{j,m-1})|.$$ Since $\sum_{j\geq0} (j+1)2^{-j}=4$, it suffices to prove that, for all $\varepsilon>0$,
\begin{equation}\label{eq:5}
\sum_{n\geq1}\sum_{j\geq0} \mmp\Big\{\max_{1\leq m\leq 2^j} g(s_n)|Y_n(t_{j,m})-Y_n(t_{j,m-1})|>\varepsilon (j+1) 2^{-j}\Big\}<\infty.
\end{equation}

Applying the mean value theorem separately to each 
function $x\mapsto k^{-1/2-x}$, we obtain deterministic numbers $r_{j,m,k}\in [t_{j,m-1}, t_{j,m}]$ such that $$k^{-1/2-t_{j,m}}-k^{-1/2-t_{j,m-1}}= -(t_{j,m}-t_{j,m-1})\frac{\log k}{k^{1/2+r_{j,m,k}}}.$$ Consequently,
\begin{equation}\label{eq:proof_mean_value_thm}
Y_n(t_{j,m})-Y_n(t_{j,m-1})\\
=
-2^{-j}(s_n-s_{n+1})
\sum_{k>N(s_{n+1})}
\frac{a_k\log k}{k^{1/2+r_{j,m,k}}}
\tilde\eta_{k,\rho}(s_{n+1}).
\end{equation}

We now follow the exponential-moment argument used in the proof of
Lemma~\ref{4}. Since
$r_{j,m,k}\geq t_{j,m-1}\geq s_{n+1}$, we have $$\frac{1}{k^{1+2r_{j,m,k}}}\leq \frac{1}{k^{1+2s_{n+1}}}\quad\text{and}\quad \frac{\log k}{k^{1/2+r_{j,m,k}}} \leq \frac{\log k}{k^{1/2+s_{n+1}}}.$$
Moreover, for $k\geq2$, $$\frac{\log k}{k^{s_{n+1}/2}}\leq\frac{2}{\eee s_{n+1}}\leq\frac{1}{s_{n+1}}.$$ Consequently, the definition of the truncation gives
\begin{multline*}
|\widetilde\eta_{k,\rho}(s_{n+1})|
\leq
\frac{2\rho k^{(1+s_{n+1})/2}}
{|a_k|(\log 1/s_{n+1})
 (s_{n+1}^{\beta}\log\log 1/s_{n+1})^{1/2}}\\
\leq
\frac{2\rho k^{1/2+s_{n+1}}}
{|a_k|\log k\,(\log 1/s_{n+1})
 (s_{n+1}^{2+\beta}\log\log 1/s_{n+1})^{1/2}}
\quad\text{a.s.}
\end{multline*}
Terms corresponding to $a_k=0$ may, of course, be omitted.

Using $\me[\widetilde\eta_{k,\rho}(s_{n+1})]=0$,
$\me[(\widetilde\eta_{k,\rho}(s_{n+1}))^2]\leq1$ and
$\eee^x\leq1+x+(x^2/2)\eee^{|x|}$, we obtain, for $u\in\mr$,
\begin{multline}
\me\exp\Big[\pm u\sum_{k>N(s_{n+1})}\frac{a_k\log k}{k^{1/2+r_{j,m,k}}}\widetilde\eta_{k,\rho}(s_{n+1})\Big]\\ \leq \exp\Big(\frac{u^2}{2}\exp\Big(\frac{2\rho |u|}{(\log 1/s_{n+1})(s_{n+1}^{2+\beta}\log\log 1/s_{n+1})^{1/2}}\Big)\sum_{k>N(s_{n+1})}\frac{(a_k\log k)^2}{k^{1+2s_{n+1}}}\Big).\label{eq:proof_mgf_preestimate}
\end{multline}
Since $v$ is completely monotone, two applications of the monotone
density theorem to \eqref{eq:variance} (Theorem 1.7.2 and the remark following it in \cite{BGT:1989}) yield
$$\sum_{k\geq1}\frac{(a_k\log k)^2}{k^{1+2s}}=\frac{v''(s)}{4}~\sim~\frac{c\beta\Gamma(\beta+2)}{(2s)^{\beta+2}},
\quad s\to0+.$$
Thus, with $b_\beta:=c\beta\Gamma(\beta+2)2^{-1-\beta}$, the sum in \eqref{eq:proof_mgf_preestimate} is at most
$b_\beta s_{n+1}^{-2-\beta}$ for all sufficiently large $n$.
Therefore,
\begin{multline}
\me\exp\Big[\pm u\sum_{k>N(s_{n+1})}\frac{a_k\log k}{k^{1/2+r_{j,m,k}}}\widetilde\eta_{k,\rho}(s_{n+1})\Big]\\ \leq
\exp\Big(\frac{b_\beta u^2}{2s_{n+1}^{2+\beta}}\exp\Big(\frac{2\rho |u|}{(\log 1/s_{n+1})(s_{n+1}^{2+\beta}\log\log 1/s_{n+1})^{1/2}}\Big)\Big).
\label{eq:proof_mgf_estimate}
\end{multline}
Let $u>0$. Combining \eqref{eq:proof_mean_value_thm},
\eqref{eq:proof_mgf_estimate}, Markov's inequality and
$\eee^{u|x|}\leq\eee^{ux}+\eee^{-ux}$, we obtain
\begin{multline*}
\mmp\big\{g(s_n)|Y_n(t_{j,m})-Y_n(t_{j,m-1})|>\varepsilon (j+1)2^{-j}\big\}\\
\leq 2\exp\Big(-\frac{u\varepsilon(j+1)}{g(s_n)(s_n-s_{n+1})}+\frac{b_\beta u^2}{2s_{n+1}^{2+\beta}}\exp\Big(\frac{2\rho u}{(\log 1/s_{n+1})(s_{n+1}^{2+\beta}\log\log 1/s_{n+1})^{1/2}}\Big)\Big).
\end{multline*}

Put $$u=\frac{3\varepsilon s_{n+1}^{2+\beta}}{2b_\beta g(s_n)(s_n-s_{n+1})}, \qquad k_n:=\frac{s_{n+1}^{2+\beta}}{b_\beta(g(s_n))^2(s_n-s_{n+1})^2},$$
and $$\ell_n:=\frac{s_{n+1}^{1+\beta/2}}{b_\beta g(s_n)(s_n-s_{n+1})(\log 1/s_{n+1})(\log\log 1/s_{n+1})^{1/2}}.$$ Then $$\ell_n~\sim~\frac{1}{b_\beta(1-\gamma)c_\beta}n^{2\gamma-1}~\to~ 0$$ because $\gamma<1/2$. Hence
$\exp(3\rho\varepsilon\ell_n)\leq10/9$ for all sufficiently large
$n$, and therefore
\begin{equation*}
\mmp\big\{g(s_n)|Y_n(t_{j,m})-Y_n(t_{j,m-1})|>\varepsilon (j+1)2^{-j}\big\}\leq 2\exp\Big(-\frac{3}{2}\varepsilon^2(j+1)k_n
+\frac{5}{4}\varepsilon^2k_n\Big).
\end{equation*}
Furthermore,
\begin{equation}\label{eq:4}
k_n~\sim~\frac{1}{b_\beta(1-\gamma)c_\beta^2}n^{2\gamma}\log n~\to~\infty.
\end{equation}
Using the union bound, for all sufficiently large $n$,
\begin{multline*}
\sum_{j\geq0}\mmp\big\{\max_{1\leq m\leq2^j}g(s_n)|Y_n(t_{j,m})-Y_n(t_{j,m-1})|
>\varepsilon (j+1)2^{-j}\big\}\\ \leq \sum_{j\geq0}2^{j+1}
\exp\Big(-\frac{3}{2}\varepsilon^2(j+1)k_n
+\frac{5}{4}\varepsilon^2k_n\Big)
=\frac{2\exp(-\varepsilon^2k_n/4)}
 {1-2\exp(-3\varepsilon^2k_n/2)}.
\end{multline*}
By \eqref{eq:4}, the last expression is summable in $n$. This proves \eqref{eq:5}, and the Borel-Cantelli lemma completes the proof of \eqref{eq:3}.
\end{proof}

\begin{lemma}\label{lem:l1}
Fix any $\gamma>0$ and put $\ms_n:=\exp(-n^{1+\gamma})$ for integer $n\geq 2$. Let $N_1$ and $N_2$ be integer-valued, possibly dependent on $\gamma$, functions defined on some (not necessarily the same) right vicinities of $0$ and satisfying $\lim_{s\to 0+}(N_1(s))^s=1$ and $\lim_{s\to 0+}(N_2(s))^s=+\infty$. Then
\begin{equation}\label{eq:inter}
\lim_{n\to\infty} g(\ms_n)\sum_{k=1}^{N_1(\ms_n)}\frac {a_k}{k^{1/2+\ms_n}} \tilde{\eta}_{k,1}(\ms_n)=0\quad\text{{\rm a.s.}}
\end{equation}
and
\begin{equation}\label{eq:inter1}
\lim_{n\to\infty} g(\ms_n)\sum_{k>N_2(\ms_n)}\frac {a_k}{k^{1/2+\ms_n}} \tilde{\eta}_{k,1}(\ms_n)=0\quad\text{{\rm a.s.}}
\end{equation}
\end{lemma}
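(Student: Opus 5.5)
For both relations I will use the exponential-moment/Markov argument of Lemma \ref{4}, applied along the very sparse sequence $(\ms_n)$. The truncated variables $\tilde\eta_{k,1}(\ms_n)$ are centered, independent, satisfy $\me[(\tilde\eta_{k,1}(\ms_n))^2]\leq 1$, and obey the a.s. bound \eqref{eq:mm112} with $\rho=1$. So the point is only that the variance of each partial sum is $o(v(\ms_n))$, in a quantitative form that survives the Borel--Cantelli summation over $n$. Since $\log 1/\ms_n=n^{1+\gamma}$, we have $\log\log 1/\ms_n=(1+\gamma)\log n$. Hence a tail bound of the form $\exp(-\kappa_n\log\log 1/\ms_n)=n^{-(1+\gamma)\kappa_n}$ with $\kappa_n\to\infty$ is summable.

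For \eqref{eq:inter}, put $V_1(s):=\sum_{k\leq N_1(s)}a_k^2k^{-1-2s}\leq \sum_{k\leq N_1(s)}a_k^2k^{-1}=w_{N_1(s)}-1\sim c(\log N_1(s))^\beta$ by \eqref{eq:basic}. Since $s\log N_1(s)\to 0$ and $v(s)\asymp s^{-\beta}$ by \eqref{eq:variance}, this gives $\delta(s):=V_1(s)/v(s)\to 0$. Repeating the computation leading to \eqref{eq:ss1}, with $\sum_{k\le N_1}$ in place of $\sum_{k>N}$, I obtain
\[
\me\big[\eee^{\pm uY}\big]\leq \exp\Big(\frac{u^2\delta(s)}{4\log\log 1/s}\exp\Big(\frac{4c_\beta|u|}{\log\log 1/s}\Big)\Big),
\]
where $Y=g(s)\sum_{k\le N_1(s)}\ldots$. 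Next I take $u=\lambda\log\log 1/\ms_n$ with $\lambda$ fixed and large. Markov's inequality then gives
\[
\mmp\{|Y|>\varepsilon\}\leq 2\exp\big(-(\varepsilon\lambda-\lambda^2\delta(\ms_n)\eee^{4c_\beta\lambda}/4)\log\log 1/\ms_n\big).
\]
For large $n$ the exponent is at most $-(\varepsilon\lambda/2)(1+\gamma)\log n$. Choosing $\lambda$ with $(1+\gamma)\varepsilon\lambda/2>1$ makes this summable, and the direct part of the Borel--Cantelli lemma completes the argument. One point to handle: if $N_1(s)$ is large, some $k\le N_1(s)$ might not yet satisfy \eqref{eq:mm112}. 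This is not a problem, because the bound \eqref{eq:mm1} holds for all $k\in\mathcal S$ and $s\in(0,1/\eee)$.

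For \eqref{eq:inter1}, the same scheme works with $V_2(s):=\sum_{k>N_2(s)}a_k^2k^{-1-2s}$, and I must show $V_2(s)=o(v(s))$. I write it as a Laplace--Stieltjes tail:
\[
V_2(s)=\int_{(\log N_2(s),\infty)}\eee^{-2sx}\,{\rm d}A(\eee^x).
\]
With $x_0:=\log N_2(s)$ we have $s x_0\to\infty$. Integration by parts together with $A(\eee^x)\le Cx^\beta$ gives $V_2(s)\le C\int_{x_0}^\infty 2s\,\eee^{-2sx}x^\beta{\rm d}x=Cs^{-\beta}\int_{2sx_0}^\infty \eee^{-y}(y/2)^\beta{\rm d}y$. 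The last integral tends to $0$, so $V_2(s)=o(s^{-\beta})=o(v(s))$. The rest is identical to the first part.

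The main obstacle is making sure that the factor $\exp(4c_\beta|u|/\log\log 1/s)$ stays bounded at the chosen $u$, which requires $u=O(\log\log 1/s)$. With that choice the decay rate per step is only a power of $n$, so the argument needs the variance ratio $\delta$ merely to tend to $0$. No rate is needed, since summability comes from letting $\lambda$ be large and from $\log\log(1/\ms_n)\asymp\log n$ along the geometric-type sequence $(\ms_n)$.
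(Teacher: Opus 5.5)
Your proposal is correct and follows essentially the same route as the paper: the exponential-moment bound from the proof of Lemma \ref{4} with $\rho=1$, the estimates $\sum_{k\le N_1(s)}a_k^2k^{-1-2s}=o(v(s))$ and $\sum_{k>N_2(s)}a_k^2k^{-1-2s}=o(v(s))$ (the latter via integration by parts and the substitution $y=2s\log x$), and then Markov's inequality with the direct Borel--Cantelli lemma along $(\ms_n)$. The only difference is the choice of parameter. The paper takes $u=\varepsilon^{-1}\log\log(1/s)$ and forces the variance ratio below a small $r$, so that the resulting power of $n$, namely $(1-\delta)(1+\gamma)$, exceeds $1$; this uses $\gamma>0$. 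Your choice $u=\lambda\log\log(1/s)$ with $\lambda$ large would work even for $\gamma=0$.
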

\begin{proof}
For $s>0$ close to $0$, put $$Z_1(s):=g(s)\sum_{k=1}^{N_1(s)}\frac {a_k }{k^{1/2+s}} \tilde{\eta}_{k,1}(s).$$ The argument leading to both \eqref{eq:inter} and \eqref{eq:inter1} is similar to that used in the proof of Lemma \ref{4}. In view of this, we provide a proof of \eqref{eq:inter} and only comment on a proof of \eqref{eq:inter1}.

As far as \eqref{eq:inter} is concerned, according to the direct part of the Borel-Cantelli lemma, it is sufficient to prove that, for all $\varepsilon > 0$,
\begin{equation}\label{eq:w1}
\sum_{n\ge 1} \mmp\{|Z_1(\ms_n)|>\varepsilon\} <\infty.
\end{equation}
To this end, we obtain (compare with \eqref{eq:ss1}), for $u\in\mr$,
$$
\me [\eee^{u Z_1(s)}] \le   \exp \Big( \frac{u^2 (g(s))^2}{2}\sum_{k=1}^{N_1(s)}\frac {a_k^2}{k^{1+2s}}\exp\Big( \frac{4c_\beta |u|}{\log\log 1/s} \Big) \Big).$$
We claim that
\begin{equation}\label{eq:small}
\sum_{k=1}^{N_1(s)}\frac{a_k^2}{k^{1+2s}}=o(v(s))\quad\text{and}\quad \sum_{k>N_2(s)}\frac{a_k^2}{k^{1+2s}}=o(v(s)) ,\quad s\to 0+.
\end{equation}
Indeed, using \eqref{eq:basic} we conclude that $$s^\beta \sum_{k=1}^{N_1(s)}\frac{a_k^2}{k^{1+2s}}\leq s^\beta\sum_{k=1}^{N_1(s)}\frac{a_k^2}{k} ~\sim~ c (s\log N_1(s))^\beta~\to~ 0,\quad s\to 0+.$$ Invoking \eqref{eq:variance} we obtain the first part of \eqref{eq:small}. Further, observe that $\lim_{s\to 0+}(N_2(s))^s=+\infty$ entails $\lim_{s\to 0+}N_2(s)=+\infty$. Recall that $$A(x)=\sum_{1\leq k\leq x}k^{-1}a_k^2~\sim~c(\log x)^\beta,\quad x\to\infty.$$ Hence, given $\varepsilon>0$, $A(x)\leq (c+\varepsilon)(\log x)^\beta$ for $x\geq N_2(s)$ and $s>0$ close to $0$. Using this in combination with integration by parts we obtain
\begin{multline*}
\sum_{k>N_2(s)}\frac{a_k^2}{k^{1+2s}}=\int_{(N_2(s),\infty)}x^{-2s}{\rm d}A(x)=-(N_2(s))^{-2s}A(N_2(s))+2s \int_{N_2(s)}^\infty x^{-2s-1}A(x){\rm d}x\\\leq 2(c+\varepsilon)s \int_{N_2(s)}^\infty x^{-2s-1}(\log x)^\beta{\rm d}x.
\end{multline*}
Changing the variable $y=2s\log x$ yields $$s^\beta \sum_{k>N_2(s)}\frac{a_k^2}{k^{1+2s}}\leq 2^{-\beta}(c+\varepsilon)\int_{2s\log N_2(s)}^\infty \eee^{-y}y^\beta{\rm d}y~\to~0,\quad s\to 0+,$$ and the second part of \eqref{eq:small} follows.

Pick $r>0$ close to $0$ to ensure that, with $\varepsilon$ as in \eqref{eq:w1}, $\delta:=r\varepsilon^{-2}\exp(4\varepsilon^{-1}c_\beta)$ satisfies $(1-\delta)(1+\gamma)>1$. According to the preceding discussion, for small enough $s>0$, $$\sum_{k=1}^{N_1(s)} \frac{a_k^2}{k^{1+2s}}\leq rv(s).$$ Summarizing, for $u\in\mr$ and small $s>0$,
\begin{multline*}
\me [\eee^{u Z_1(s)}] \le   \exp \Big( \frac{u^2 (g(s))^2}{2}rv(s)\exp\Big( \frac{4c_\beta |u|}{\log\log 1/s} \Big) \Big)\\
=\exp \Big(\frac{r u^2}{4 \log\log 1/s} \exp\Big( \frac{4 c_\beta |u|}{\log\log 1/s} \Big) \Big).
\end{multline*}
Invoking Markov's inequality with $u = (1/\varepsilon)\log\log(1/s)$ yields, for small $s>0$,
\begin{align*}
\mmp\{Z_1 (s)>\varepsilon\} \le \eee^{-u\varepsilon} \me [\eee^{uZ_1(s)}] \le \exp\big(-\big(1 - r\varepsilon^{-2} \eee^{4 \varepsilon^{-1} c_\beta}\big) \log\log(1/s)    \big)
= \frac 1{\log(1/s)^{1-\delta}}.
\end{align*}
Noting that the same exponential-moment estimate applies to $-Z_1(s)$, we arrive at \eqref{eq:w1}.

The proof of \eqref{eq:inter1} uses the second relation in \eqref{eq:small} and is otherwise entirely analogous to the proof of \eqref{eq:inter}. We omit the details.
\end{proof}
\begin{lemma}\label{lem:new}
Fix sufficiently small $\delta >0$, pick $\gamma >0$ satisfying $(1+\gamma)(1-\delta^2/8)<1$ and put $\ms_n=\exp(-n^{1+\gamma})$ for integer $n\geq 2$. Then
$$
{\lim\sup}_{n\to \infty}g(\ms_n) \sum_{k\geq 1}\frac{a_k}{k^{1/2+\ms_n}}\tilde{\eta}_{k,1}(\ms_n) \geq 1-\delta \quad\text{{\rm a.s.}}
$$
\end{lemma}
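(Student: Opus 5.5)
Following the strategy of the lower-bound part of the LIL in \cite{Buraczewski etal:2023}, we split the sum at $\ms_n$ into a ``block'' depending essentially on fresh variables, plus negligible remainders. Take $N_1(s):=\lfloor \exp(\theta/s)\rfloor$ and $N_2(s):=\lfloor\exp(\Theta/s)\rfloor$ with $\theta$ small and $\Theta$ large (both fixed, depending on $\delta$). We decompose
$$g(\ms_n)\sum_{k\geq1}\frac{a_k\tilde\eta_{k,1}(\ms_n)}{k^{1/2+\ms_n}}=U_n^{(1)}+V_n+U_n^{(2)},$$
where $U_n^{(1)}$ collects $k\leq M_n:=\lfloor\exp(\theta/\ms_n)\rfloor$, $U_n^{(2)}$ collects $k>\lfloor \exp(\Theta/\ms_n)\rfloor$, and $V_n$ is the middle block. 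Since $\ms_{n+1}/\ms_n\to0$ superfast (because $n^{1+\gamma}$ has increments $\to\infty$), the index ranges $(M_n,\exp(\Theta/\ms_n)]$ are pairwise disjoint for large $n$. Hence the $V_n$ are independent. The truncation events $\mathcal A^c_{k,1}(\ms_n)$ involve only $\eta_k$, so this independence holds for the truncated variables as well.

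The remainders are handled by Lemma \ref{lem:l1}, but with $N_1(s)^s=\eee^{\theta}$ and $N_2(s)^s=\eee^{\Theta}$ rather than the limits $1$ and $\infty$ assumed there. I would redo the proof of \eqref{eq:small} quantitatively:
$$\sum_{k\le N_1(s)}a_k^2k^{-1-2s}\le (c\theta^\beta+o(1))s^{-\beta},\qquad \sum_{k>N_2(s)}a_k^2k^{-1-2s}\le \Big(2^{-\beta}c\int_{2\Theta}^\infty \eee^{-y}y^\beta{\rm d}y+o(1)\Big)s^{-\beta}.$$
Both are at most $r v(s)$ with $r=r(\theta,\Theta)$ arbitrarily small. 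The exponential-moment argument of Lemma \ref{lem:l1} then gives $\mmp\{|U_n^{(i)}|>\varepsilon\}\le (\log 1/\ms_n)^{-(1-\delta')}=n^{-(1+\gamma)(1-\delta')}$, which is summable. So $U_n^{(i)}\to0$ a.s. Alternatively, one can choose $\theta=\theta_n\to0$ and $\Theta_n\to\infty$ slowly and quote Lemma \ref{lem:l1} directly, and I would prefer this. It also gives ${\rm Var}[V_n]\sim v(\ms_n)$, because by \eqref{eq:small} the complementary variance parts are $o(v)$ and the truncation changes variances only by $o(1)$ uniformly, in view of Lemma \ref{lem:inter1}.

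For the middle block, I would apply the second Borel--Cantelli lemma to the independent events $\{V_n>1-\delta\}$. This requires a lower exponential bound
$$\mmp\{V_n>1-\delta\}\ge \exp\big(-(1-\delta)^2(1+\delta/2)\log\log 1/\ms_n\big)\ge n^{-(1+\gamma)(1-\delta^2/8)}$$
for a suitable choice of constants. Given the hypothesis $(1+\gamma)(1-\delta^2/8)<1$, this sum diverges. The main obstacle is this lower tail estimate: Kolmogorov's converse exponential inequality for bounded independent summands. The summands $g(\ms_n)a_kk^{-1/2-\ms_n}\tilde\eta_{k,1}(\ms_n)$ are bounded by $4c_\beta/\log\log(1/\ms_n)$, as in \eqref{eq:mm112}, i.e.\ by $o(1)$ times the standard deviation of order $(\log\log)^{-1/2}$. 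The required deviation $x=(1-\delta)(2\log\log)^{1/2}$ in standard units therefore satisfies $x\cdot(\text{bound}/\text{sd})\to0$. This is exactly the regime of Kolmogorov's lower bound $\mmp\{S>x\,{\rm sd}\}\ge\exp(-x^2(1+\epsilon)/2)$.

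Combining the two steps, $\limsup V_n\ge1-\delta$ a.s. together with $U_n^{(i)}\to0$ a.s. yields the claim. The bookkeeping of $\delta$ has to be done so that the $(1-\delta)^2$ loss from the Kolmogorov bound and the variance ratio fit into the margin $1-\delta^2/8$. I expect this to work after shrinking $\delta$, since $(1-\delta)^2(1+\epsilon)\le 1-\delta^2/8$ for any $\epsilon<2\delta$ roughly.
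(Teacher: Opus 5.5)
Your architecture is the paper's. The paper also takes $N_1,N_2$ from Lemma~5.8 of \cite{Buraczewski etal:2023} with $(N_1(s))^s\to1$, $(N_2(s))^s\to\infty$ and $N_1(\ms_{n+1})\geq N_2(\ms_n)$ for large $n$. It removes the outer pieces by Lemma~\ref{lem:l1} and finishes with independence of the middle blocks and the converse Borel--Cantelli lemma. Your preferred variant with $\theta_n\to0$, $\Theta_n\to\infty$ slowly is exactly this. The fixed-$\theta,\Theta$ variant would only give $\limsup_n|U^{(i)}_n|\leq\varepsilon_0(\theta,\Theta)$, since Lemma~\ref{lem:l1} needs $r\varepsilon^{-2}\eee^{4c_\beta/\varepsilon}$ small, so it costs a little of $\delta$.

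The genuine difference is the lower tail of the middle block. The paper proves the fourth-order estimate \eqref{eq:fourth-order}, which is a second use of Condition~A through \eqref{eq:maximal-summand}. It then derives the moment generating function asymptotics \eqref{eq:p1} and runs an exponential change of measure to reach \eqref{eq:r3}. You instead quote Kolmogorov's converse exponential inequality, which needs only centred bounded summands, the variance asymptotics, $x\to\infty$ and $x\cdot\max_k\|X_k\|_\infty/s_n\to0$. This is the same tilting mechanism in textbook form. It makes \eqref{eq:fourth-order} unnecessary, and Condition~A then enters only through the uniform divergence of the truncation levels. Your $\delta$-bookkeeping is fine: $(1-\delta)^2(1+\delta/2)\leq 1-\delta^2/8$ for $\delta\leq1$.

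Your verification of Kolmogorov's hypothesis, however, is wrong as written. With the bound $4c_\beta/\log\log(1/\ms_n)$ from \eqref{eq:mm112}, the ratio bound/sd is of order $(\log\log(1/\ms_n))^{-1/2}$, while $x$ is of order $(\log\log(1/\ms_n))^{1/2}$. Their product tends to the constant $8(1-\delta)c_\beta$, not to $0$. Since $\rho=1$ is fixed, you cannot make this small; it is the same constant that forced small $\rho$ in Lemma~\ref{4}.

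You must use the actual truncation level, i.e.\ the first line of \eqref{eq:mm1}, keeping the factor $1/\log(1/s)$. Each summand of $V_n$ is then at most $2g(s)\big(s^{\beta/2}\log(1/s)(\log\log(1/s))^{1/2}\big)^{-1}\sim 2c_\beta/(\log(1/s)\log\log(1/s))$ with $s=\ms_n$. Hence $x\cdot(\text{bound}/\text{sd})\sim4(1-\delta)c_\beta/\log(1/\ms_n)\to0$, which is exactly the paper's truncation bound $\max_k|u\xi_k(s)|=O(1)/\log(1/s)$.

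Two smaller corrections:
\begin{itemize}
\item The variance is ${\rm Var}[V_n]\sim g(\ms_n)^2v(\ms_n)=(2\log\log(1/\ms_n))^{-1}$, not $\sim v(\ms_n)$.
\item The uniform convergence $\me[\tilde\eta_{k,1}^2(\ms_n)]\to1$ over $k\in\mathcal S$, $k>N_1(\ms_n)$, does not follow from the statement of Lemma~\ref{lem:inter1}. It follows from \eqref{eq:est1} together with $Q_k\to\infty$ along $\mathcal S$ (Condition~A), as in the paper.
\end{itemize}
With these repairs your argument is complete.
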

\begin{proof}
We follow the proof of Lemma~5.8 in
\cite{Buraczewski etal:2023}. Choose the integer-valued functions
$N_1$ and $N_2$ used there. Thus, $\lim_{s\to 0+}(N_1(s))^s=1$, $\lim_{s\to 0+}(N_2(s))^s=+\infty$ and, for some $n_0\in\mn$,
\begin{equation}\label{choice}
N_1(\ms_{n+1})\geq N_2(\ms_n),\qquad n\geq n_0.
\end{equation}
For sufficiently small $s>0$, put
$$Z_2(s):=g(s)\sum_{k=N_1(s)+1}^{N_2(s)}\frac{a_k}{k^{1/2+s}}\widetilde\eta_{k,1}(s).$$
By Lemma~\ref{lem:l1}, it suffices to prove
\begin{equation}\label{eq:ss9}
\limsup_{n\to\infty}Z_2(\ms_n)\geq1-\delta
\quad\text{a.s.}
\end{equation}

To this end, the proof of Lemma~5.8 in \cite{Buraczewski etal:2023} applies once
the analogues of its variance and fourth-order estimates are
established. First, \eqref{eq:small} and \eqref{eq:variance} give
\begin{equation}\label{eq:blockvariance}
\lim_{s\to 0+} s^\beta\sum_{k=N_1(s)+1}^{N_2(s)}
\frac{a_k^2}{k^{1+2s}}=\frac{c\Gamma(1+\beta)}{2^\beta}
=\frac{1}{2c_\beta^2}.
\end{equation}
The truncation levels tend to $\infty$ uniformly over
$k\in\mathcal S\cap(N_1(s),N_2(s)]$. Indeed, formula \eqref{eq:est1}
shows that the square of the truncation level is bounded below by $C Q_k$. Condition~A implies that $Q_k\to\infty$ as
$k\to\infty$ through $\mathcal{S}$. Since $\lim_{s\to 0+}N_1(s)=\infty$, it follows that
$$
\inf_{\substack{k\in\mathcal S\\N_1(s)<k\leq N_2(s)}}Q_k
\geq
\inf_{\substack{k\in\mathcal S\\k>N_1(s)}}Q_k~\to~ \infty,
\qquad s\to0+.
$$
Thus the required uniform divergence follows. Consequently, $\lim_{s\to 0+}\me[\widetilde\eta_{k,1}^2(s)]=1$ uniformly over the same range of $k$. Hence, on putting
$$\xi_k(s):=
\frac{s^{\beta/2}a_k}{k^{1/2+s}}
\widetilde\eta_{k,1}(s),$$
we obtain from \eqref{eq:blockvariance}
\begin{equation}\label{eq:ss4}
\lim_{s\to 0+} \sum_{k=N_1(s)+1}^{N_2(s)}\me[\xi_k^2(s)]=\frac{1}{2c_\beta^2}.
\end{equation}

It remains to verify the fourth-order estimate, namely,
\begin{equation}\label{eq:fourth-order}
u^2s^{2\beta}\sum_{k\geq1} \frac{a_k^4}{k^{2+4s}}=o(1), \quad s\to0+
\end{equation}
with $u=O\big((\log\log(1/s))^{1/2}\big)$.
This estimate follows from \eqref{eq:variance} and Condition~A. Indeed, according to \eqref{eq:variance}, $\sum_{k\geq1}k^{-1-2s}a_k^2=O(s^{-\beta})$ as $s\to0+$. Consequently,
\begin{equation*}
u^2s^{2\beta}\sum_{k\geq1}\frac{a_k^4}{k^{2+4s}}\leq u^2s^{2\beta}\sup_{k\geq1}\frac{a_k^2}{k^{1+2s}}\sum_{k\geq1}\frac{a_k^2}{k^{1+2s}}=O\Big(u^2s^\beta\sup_{k\geq1}\frac{a_k^2}{k^{1+2s}}\Big).
\end{equation*}
Thus, we are left with proving that
\begin{equation}\label{eq:maximal-summand}
\lim_{s\to 0+}s^\beta\log\log(1/s)\,\sup_{k\geq1}\frac{a_k^2}{k^{1+2s}}=0.
\end{equation}

Recall the notation $$Q_k:=\frac{k(\log k)^\beta}{a_k^2(\log\log k)^2\log^{(3)}k},\quad k\in\mathcal{S}.$$ Condition~A implies that there exists $K\in\mn$ such that $Q_k\geq 1$ for all $k\in\mathcal{S}$ satisfying $k\geq K$.
Furthermore,
\begin{equation}\label{eq:single-term}
s^\beta\frac{a_k^2}{k^{1+2s}}=\frac{(s\log k)^\beta k^{-2s}}{Q_k(\log\log k)^2\log^{(3)}k}.
\end{equation}
For $k\geq K$, we split the supremum in \eqref{eq:maximal-summand} according as $\log k\leq s^{-1/2}$ or $\log k>s^{-1/2}$. In the first range, $(s\log k)^\beta k^{-2s}\leq s^{\beta/2}$, and hence its contribution to the left-hand side of
\eqref{eq:maximal-summand} vanishes. In the second range, $\log\log k\geq 2^{-1}\log(1/s)$ for all sufficiently small $s$, whereas $\sup_{x>0}x^\beta\eee^{-2x}<\infty$.
It follows from \eqref{eq:single-term} that the contribution of this range is $O((\log(1/s))^{-2})=o(1)$. For the
remaining indices $k<K$, $$s^\beta \log\log(1/s)\sup_{1\leq k<K}\frac{a_k^2}{k^{1+2s}}\leq s^\beta\log\log(1/s)\max_{1\leq k<K}\frac{a_k^2}{k}~\to~0,\quad s\to 0+.$$ This proves \eqref{eq:maximal-summand}, and hence \eqref{eq:fourth-order}.

Together with \eqref{eq:ss4} and the truncation bound $\max_k|u\xi_k(s)|=o(1)$, which is secured by $$|u\xi_k(s)|\leq O\big((\log\log 1/s)^{1/2}\big)\frac{s^{\beta/2}|a_k|}{k^{1/2+s}}\frac{2}{|a_k|\log 1/s}\Big(\frac{k^{1+s}}{s^\beta\log\log 1/s}\Big)^{1/2}=\frac{O(1)}{\log 1/s}=o(1)$$ as $s\to 0+$,
 
formula \eqref{eq:fourth-order} gives, exactly as in
(5.28)--(5.32) of \cite{Buraczewski etal:2023},
\begin{equation}\label{eq:p1}
\me[\exp(us^{\beta/2}W(s))]=\exp\Big(\frac{u^2}{4c_\beta^2}+u^2r(s,u)\Big),\quad s\to 0+
\end{equation}
for an appropriate function $r$ satisfying, for each fixed $C>0$, $\lim_{s\to 0+}\sup_{|u|\leq C(\log\log 1/s)^{1/2}}|r(s,u)| = 0$,
where $W(s):=Z_2(s)/g(s)$. The remainder of the proof is the exponential-change-of-measure
argument from Lemma~5.8 in \cite{Buraczewski etal:2023}. Namely,
using \eqref{eq:p1},
$$\frac{s^{\beta/2}}{g(s)}~\sim~ c_\beta^{-1}(\log\log(1/s))^{1/2},\quad s\to 0+$$
and choosing $u(s)=2(1-\delta/2)c_\beta (\log\log(1/s))^{1/2}$, one obtains, for all sufficiently small $s>0$,
\begin{equation}\label{eq:r3}
\mmp\{Z_2(s)>1-\delta\}
\geq
\frac{1}{3}
\exp\Big(-\Big(1-\frac{\delta^2}{8}\Big)\log\log(1/s)\Big).
\end{equation}
Therefore, $$\sum_{n\geq n_0}\mmp\{Z_2(\ms_n)>1-\delta\}=\infty,$$ because $(1+\gamma)(1-\delta^2/8)<1$. By \eqref{choice}, the random variables $Z_2(\ms_{n_0})$, $Z_2(\ms_{n_0+1}),\ldots$ are independent. The converse part of the
Borel--Cantelli lemma now yields \eqref{eq:ss9}, completing the proof of Lemma \ref{lem:new}.
\end{proof}

\noindent {\sc Completion of the proof of Theorem~\ref{thm:main4}}. Put $N(s):=\lfloor 1/s\rfloor$ for $0<s<1/\eee$. Then $N$ is nonincreasing, $$N(s)\to\infty, \qquad s^{1/2}\log N(s)\to 0\qquad\text{and}\qquad (N(s))^s\to 1
$$ as $s\to 0+$. Recall that $g(s):=\big(2v(s)\log\log(1/s)\big)^{-1/2}$.

An application of Lemma~\ref{lem:prelim}, with this choice of $N$, gives
$$\lim_{s\to 0+} g(s)\sum_{k\leq N(s)}\frac{a_k\eta_k}{k^{1/2+s}}=0\quad\text{a.s.}$$ Moreover, Lemma~\ref{lem:inter1} shows that, for each $\rho>0$,
$$\lim_{s\to 0+} g(s)\sum_{k>N(s)}\frac{a_k}{k^{1/2+s}}\big(\eta_k\1_{\mathcal A_{k,\rho}(s)}-\me[\eta_k\1_{\mathcal A_{k,\rho}(s)}]\big)=0\quad\text{a.s.}$$
Consequently, $$g(s)X(s)=g(s)\sum_{k>N(s)}\frac{a_k\widetilde\eta_{k,\rho}(s)}{k^{1/2+s}}+o(1),\quad s\to 0+\quad\text{a.s.}$$ Lemmas~\ref{4} and \ref{5} therefore imply $\limsup_{s\to0+}g(s)X(s)\leq 1$ a.s.\ after letting $\gamma\to 0+$.

For the converse inequality, take $\rho=1$ and use the decomposition
\begin{multline*}
X(s)=\sum_{k\leq N(s)}\frac{a_k\eta_k}{k^{1/2+s}}-\sum_{k\leq N(s)}\frac{a_k\widetilde\eta_{k,1}(s)}{k^{1/2+s}}\\+\sum_{k>N(s)}\frac{a_k}{k^{1/2+s}}\big(\eta_k\1_{\mathcal A_{k,1}(s)}
-\me[\eta_k\1_{\mathcal A_{k,1}(s)}]\big)+\sum_{k\geq1}\frac{a_k\widetilde\eta_{k,1}(s)}{k^{1/2+s}}.
\end{multline*}
Along the sequence $(\mathfrak s_n)$ from Lemma~\ref{lem:new}, the first three terms, after multiplication by $g(\mathfrak s_n)$, converge to zero almost surely by Lemmas~\ref{lem:prelim}, \ref{lem:l1}, and \ref{lem:inter1}. Hence, Lemma~\ref{lem:new} gives $$\limsup_{s\to0+}g(s)X(s)\geq 1-\delta\quad\text{a.s.}$$ Letting $\delta\to 0+$, we obtain $\limsup_{s\to0+}g(s)X(s)=1$ a.s. Applying the same argument to $-\eta_k$ yields $\liminf_{s\to0+}g(s)X(s)=-1$ a.s.

Finally, \eqref{eq:variance} implies $\log\log v(s)\sim\log\log(1/s)$ as $s\to0+$. Thus $g(s)$ may be replaced by $(2v(s)\log\log v(s))^{-1/2}$. The normalized random function is
almost surely continuous on a right neighbourhood of zero. Its $\limsup$ and $\liminf$ are $1$ and $-1$, respectively, so the intermediate value theorem shows that its cluster set is $[-1,1]$. This completes
the proof of Theorem \ref{thm:main4}.

\section{Justification for Remark \ref{rem:suff}}\label{sect:suff}

In this section, we show that Condition B entails Condition A, thereby justifying Remark \ref{rem:suff}.

Let $k_1<k_2<\ldots$ be the increasing enumeration of $\mathcal{S}$. According to part (a) of Condition B, for sufficiently large $j$, $$j=\rho(k_j)\leq C k_j/\varphi(k_j).$$ By part (b) of Condition B, $$Q_{k_j}=\frac{k_j(\log k_j)^\beta}{a_{k_j}^2 (\log\log k_j)^2\log^{(3)}k_j}\geq C_1\frac{k_j}{\varphi(k_j)}\geq C_2j.$$ Thus, $$\#\{k\in\mathcal{S}:~Q_k\leq x\}=\#\{j:~Q_{k_j}\leq x\}\leq \#\{j:~C_2j\leq x\}=O(x),\quad x\to\infty,$$ that is, Condition A does indeed hold.

\section{Applications of Theorem \ref{thm:main4}}\label{sect:applications}

We first provide examples of $(a_k)$ satisfying \eqref{eq:basic} and Condition B (hence, also Condition A). This means that Theorem \ref{thm:main4} applies to such sequences.

\noindent {\sc Logarithmic functions}: $a_k=(\log k)^\alpha$ for $k\geq 2$ and $\alpha>-1/2$, $a_1=0$. According to \eqref{eq:basic2}, the $(a_k)$ so defined satisfies \eqref{eq:basic} with $c=(2\alpha+1)^{-1}$ and $\beta=2\alpha+1$. In the notation of Condition B, we can take $\varphi(x)=1$. Then part (b) of Condition B trivially holds. Summarizing, Proposition \ref{prop:bura} follows from Theorem \ref{thm:main4}.

\noindent {\sc The von Mangoldt function} $\Lambda$ is defined by $\Lambda(n):=\log p$ if $n=p^j$ for some prime number $p$ and some $j\in\mn$, and $\Lambda(n):=0$ otherwise. Put $a_k=(\Lambda(k))^\alpha$ for $k\in\mn$ and $\alpha>0$. It is shown on p.~273 in \cite{Buraczewski etal:2023} that the $(a_k)$ so defined satisfies \eqref{eq:basic} with $c=(2\alpha)^{-1}$ and $\beta=2\alpha$. Let $\pi$ be the prime counting function. Using $\pi(x)\leq x$, we obtain $$\sum_{k=2}^{\lfloor \log_2 x\rfloor}\pi(x^{1/k})\leq x^{1/2}+\lfloor \log_2 x\rfloor x^{1/3}=O(x^{1/2}),\quad x\to\infty.$$ By the prime number theorem (see, for instance, Theorem 6.2.1 in \cite{BGT:1989}), $\pi(x)\sim x/\log x$ as $x\to\infty$. Hence, $$\rho(x)=\pi(x)+\sum_{k=2}^{\lfloor \log_2 x\rfloor}\pi(x^{1/k})=O(x/\log x),\quad x\to\infty,$$ and we may take $\varphi(x)=\log x$ in part (a) of Condition B. For $k=p^m\in\mathcal{S}$, $$a_k^2=(\log p)^{2\alpha}\leq(\log k)^{2\alpha}\leq C\frac{(\log k)^{2\alpha+1}}{(\log\log k)^2\log^{(3)}k}$$ for all sufficiently large $k$, that is, part (b) of Condition B holds.

Actually, our desire to understand whether a LIL holds for this particular $(a_k)$ has been the original motivation behind this work.

\noindent {\sc Some bounded $(a_k)$}. We state right away that every bounded sequence $(a_k)$ satisfies Condition B with, for example, $\varphi(x)=1$.   

\noindent (a) {\sc The M\"{o}bius function} $\mu$ is defined by $\mu(1)=1$, $\mu(n)=(-1)^k$ if $n$ is the product of $k$ distinct primes, and $\mu(n)=0$ if $n$ is divisible by the square of a prime. Put $a_k=\mu(k)$ for $k\in\mn$. Its support $\mathcal{S}$ is the set of square-free integers. By the classical asymptotic formula for the number of square-free integers (see, for instance, Theorem 334 in Chapter XVIII of \cite{Hardy+Wright:2008}), $\rho(x)=\sum_{n\leq x}(\mu(n))^2~\sim~6\pi^{-2}x$ as $x\to\infty$. Thus, we may take $\varphi(x)=1$. Moreover, partial summation in the last asymptotic formula yields $$\sum_{k=1}^n \frac{(\mu(k))^2}{k}~\sim~\frac{6}{\pi^2}\log n,\quad n\to\infty,$$ so \eqref{eq:basic} holds with $c=6\pi^{-2}$ and $\beta=1$. 

\noindent (b) {\sc The Liouville function} $\lambda$ is defined by $\lambda(1):=1$ and $\lambda(n):=(-1)^{\Omega(n)}$ for $n\geq2$, where, for the prime factorization $n=p_1^{\alpha_1}\cdots p_m^{\alpha_m}$, $\Omega(n):=\alpha_1+\cdots+\alpha_m$. In particular, $(\lambda(k))^2=1$ for each $k\in\mn$, and therefore
$$\sum_{k=1}^n \frac{\lambda(k)^2}{k}=\sum_{k=1}^n \frac{1}{k}~\sim~\log n,\quad n\to\infty.$$ Put $a_k=\lambda(k)$. Then \eqref{eq:basic} holds with $c=\beta=1$. We may take $\varphi(x)=1$. 

\noindent (c) {\sc Sums of two squares}. Define $$\mathcal{S}=\{k\in\mn:~k=m^2+n^2~\text{ for some }m,n\in\mathbb Z\}$$ and put $a_k=\1_{\{k\in\mathcal{S}\}}$ for $k\in\mn$. Landau's theorem (see, for instance, Theorem 7.28 on p.~261 in \cite{LeVeque:2002}) states that
\begin{equation}\label{eq:Landau-two-squares}
\rho(x)=\#\{k\in\mathcal S:~k\leq x\}~\sim~ K \frac{x}{(\log x)^{1/2}}, \quad x\to\infty,
\end{equation}
where
$$K:=\frac{1}{\sqrt{2}}
\prod_{\substack{p\ {\rm prime}\\p\equiv3\pmod4}}
\left(1-\frac{1}{p^2}\right)^{-1/2}
$$
is the Landau--Ramanujan constant. Thus, we may take $\varphi(x)=(\log x)^{1/2}$. By partial summation,
\begin{equation*}
\sum_{k=1}^n \frac{a_k^2}{k}=\frac{\rho(n)}{n}+\int_1^n\frac{\rho(t)}{t^2}\,{\rm d}t~\sim~ K\int_2^n\frac{{\rm d}t}{t(\log t)^{1/2}}~\sim~2K(\log n)^{1/2}, \quad n\to\infty.
\end{equation*}
Thus, \eqref{eq:basic} holds with $c=2K$ and $\beta=1/2$.

\noindent (d) {\sc Generalized divisor function $d_r$ with $r\in (0,1]$}. For $r>0$, let $d_r$ denote the generalized divisor function, see formulas~(1.1) and (1.2) in \cite{Norton1994}. It is the multiplicative
arithmetic function determined by
\begin{equation}\label{eq:dr-prime-powers}
d_r(1)=1,\quad d_r(p^\ell)=\prod_{j=1}^\ell \Big(1+\frac{r-1}{j}\Big), 
\quad \ell\in\mn
\end{equation}
for each prime $p$. In particular, $d_r(k)>0$ for every $k\in\mn$, and hence its support is $\mathcal{S}=\mn$. Thus, putting $a_k=d_r(k)$ for $k\in\mn$, we may take $\varphi(x)=1$.

We first verify \eqref{eq:basic} for $r>0$. The Dirichlet series of $d_r^2$ which is also multiplicative has the factorization
$$\sum_{k\geq1}\frac{(d_r(k))^2}{k^s}=\zeta(s)^{r^2}G_{r,2}(s),\quad {\rm Re}\, s>1,$$ where
\begin{equation}\label{eq:Grq}
G_{r,2}(s):=\prod_p (1-p^{-s})^{r^2}\sum_{\ell\geq 0}\frac{(d_r(p^\ell))^2}{p^{\ell s}}.
\end{equation}
Indeed, $$\sum_{\ell\geq 0}\frac{(d_r(p^\ell))^2}{p^{\ell s}}=1+\frac{r^2}{p^s}+O(p^{-2\,{\rm Re}\, s}),$$ so the Euler product in \eqref{eq:Grq} converges absolutely in every
half-plane ${\rm Re}\, s\geq 1/2+\varepsilon$. In particular,
$G_{r,2}(1)>0$. The Selberg-Delange theorem (see, for instance, Theorem~3 on p.~185 in \cite{Tenenbaum:1995}) therefore gives
\begin{equation}\label{eq:dr-moments}
\sum_{k=1}^n (d_r(k))^2~\sim~ \frac{G_{r,2}(1)}{\Gamma(r^2)}n(\log n)^{r^2-1}, \quad n\to\infty.
\end{equation}
Applying partial summation, we obtain
\begin{equation*}
\sum_{k=1}^n \frac{(d_r(k))^2}{k}=\frac{1}{n}\sum_{j=1}^n (d_r(j))^2+ \sum_{j=1}^{n-1}\sum_{k=1}^j (d_r(k))^2 \frac{1}{j(j+1)}~\sim~\frac{G_{r,2}(1)}{\Gamma(1+r^2)}(\log n)^{r^2},\quad n\to\infty.
\end{equation*}
Consequently, \eqref{eq:basic} holds with $c=G_{r,2}(1)/\Gamma(1+r^2)$ and $\beta=r^2$.

Assume now that $r\in (0,1]$. It follows from \eqref{eq:dr-prime-powers} that $d_r(p^\ell)\leq 1$ for $\ell\in\mn_0$. By multiplicativity, $0<d_r(k)\leq1$ for $k\in\mn$, that is, the sequence $(a_k)_{k\geq 1}$ is bounded.

\noindent (e) {\sc The Bessel function $J_\nu$ of the first kind} of order $\nu>-1$ is given by $$J_\nu(x):=\sum_{m\geq 0}\frac{(-1)^m}{m!\Gamma(m+\nu+1)}\Big(\frac{x}{2}\Big)^{2m+\nu},\quad x>0,$$ where $\Gamma$ is the Euler gamma function. Put $a_k=k^{1/2}J_\nu(k)$ for $k\in\mn$. By formula (1) on p.~199 in \cite{Watson:1944}, $$J_\nu(k)=\Big(\frac{2}{\pi k}\Big)^{1/2}\cos\Big(k-\frac{\pi \nu}{2}-\frac{\pi}{4}\Big)+O(k^{-3/2}),\quad k\to\infty,$$ whence $(J_\nu(k))^2=(\pi k)^{-1}(1+\sin(2k-\pi\nu))+O(k^{-2})$. In view of $$\sup_{n\geq 1}\Big|\sum_{k=1}^n \eee^{{\rm i}(2k-\pi\nu)}\Big|=\frac{1}{\sin 1},$$ an application of the Dirichlet test shows that the series $\sum_{k\geq 1} k^{-1}\sin(2k-\pi\nu)$ converges. Hence, $$\sum_{k=1}^n \frac{a_k^2}{k}=\sum_{k=1}^n (J_\nu(k))^2~\sim~ \frac{1}{\pi}\log n,\quad n\to\infty.$$ Thus, \eqref{eq:basic} holds with $c=\pi^{-1}$ and $\beta=1$. Finally, we may take $\varphi(x)=1$. 
Finally, we give an example of $(a_k)$ satisfying \eqref{eq:basic}, Condition A, but not Condition B.

\noindent {\sc Sequence with sparse spikes}. Put $$
a_k:=
\begin{cases}
\dfrac{2^{j/2}}{j}, & \text{for}~k=2^j,\quad j\in\mn,\\[1.2ex]
1, & \text{for other}~k.
\end{cases}
$$
Then
\begin{equation*}
\sum_{k=1}^n \frac{a_k^2}{k}=\sum_{k=1}^n \frac{1}{k}+\sum_{\substack{j\geq1\\2^j\leq n}}\Big(\frac{1}{j^2}-\frac{1}{2^j}\Big).
\end{equation*}
Since the series $\sum_{j\geq1}(j^{-2}-2^{-j})$ converges, we obtain $$\sum_{k=1}^n \frac{a_k^2}{k}~\sim~\log n,\quad n\to\infty.$$ Thus, \eqref{eq:basic} holds with $c=\beta=1$.

Recall the notation $$Q_k=\frac{k\log k}{a_k^2 (\log\log k)^2\log^{(3)}k},\quad k\in\mathcal{S}.$$ Suppose first that $k\in\mathcal{S}$ is not a power of $2$. Then $a_k=1$ and $$
Q_k=k\frac{\log k}{(\log\log k)^2\log^{(3)}k}.$$ Since the second factor diverges, we have $Q_k\geq k$ for all sufficiently large $k$ that are not powers of $2$. Consequently,
$$\#\{k\notin\{2^j:j\geq 4\}:~Q_k\leq x\}=O(x),\quad x\to\infty.$$

At an exceptional index $k=2^j$, we have $a_{2^j}^2=j^{-2}2^j$. Therefore,
\begin{equation*}
Q_{2^j}=\frac{2^j\log(2^j)}{(2^j/j^2)(\log\log(2^j))^2\log^{(3)}(2^j)}=\frac{j^3\log2}{(\log(j\log2))^2\log\log(j\log2)}.
\end{equation*}
In particular, $$\frac{Q_{2^j}}{j}=\frac{j^2\log2}{(\log(j\log2))^2\log\log(j\log2)}~\to~ \infty, \quad j\to\infty.$$ Hence, $Q_{2^j}\geq j$ for all sufficiently large $j$, and therefore $\#\{j\geq 4:~Q_{2^j}\leq x\}=O(x)$ as $x\to\infty$. Combining the estimates for the ordinary and exceptional indices, we conclude that $$\#\{k\geq 16:~Q_k\leq x\}=O(x),\quad x\to\infty,$$ that is, Condition A holds.

Since $\mathcal{S}=\{k\geq 16:~a_k\neq 0\}=\{16, 17,\ldots\}$, any positive function $\varphi$ satisfying part~(a) of Condition~B must be bounded. Hence, if Condition~B were satisfied, part~(b) would imply
$$a_k^2\leq C\frac{\log k}{(\log\log k)^2\log^{(3)}k}$$ for all sufficiently large $k$. Taking $k=2^j$ shows that the resulting
inequality $$\frac{2^j}{j^2}\leq C\log 2 \frac{j}{(\log(j\log2))^2\log\log(j\log2)}$$ fails for all sufficiently large $j$. Thus, Condition~B does not hold.

The latter example replaces our original example $a_k=d_r(k)$ for $r>1$, which motivated the introduction of Condition A. One can check that the $(a_k)$ so defined satisfies Condition A but does not satisfy Condition B. We refrain from reproducing the corresponding argument here.

\noindent \textbf{Acknowledgment.}
This work was supported by the National Research Foundation of Ukraine (project
2023.03/0059 ‘Contribution to modern theory of random series’).

\end{document}